\DeclareMathOperator{\ddiv}{div}
\newcommand{\triplenorm}[1]{\ensuremath{|\!|\!| #1 |\!|\!|}}
\newcommand{\bmz}[1]{\ensuremath{#1}}
\newtheorem{theorem}{Theorem}
\newtheorem{lemma}[theorem]{Lemma}
\newtheorem{corollary}{Corollary}
\newtheorem{remark}[theorem]{Remark}
\numberwithin{figure}{section}
\numberwithin{table}{section}
\newcommand{\pairv}[2]{\ensuremath{\langle#1, #2 \rangle_{V'\times V}}}
\newcommand{\pairq}[2]{\ensuremath{\langle#1, #2  \rangle_{Q'\times  Q}}}
\renewcommand{\pairv}[2]{\ensuremath{\langle#1, #2 \rangle}}
\renewcommand{\pairq}[2]{\ensuremath{\langle#1, #2  \rangle}}
\journal{Computer Methods in Applied Mechanics and Engineering}
\begin{document}

\begin{frontmatter}



\title{Stability and monotonicity for some discretizations of the Biot's consolidation model}


\author[UniZar]{C. Rodrigo\corref{cor1}}\ead{carmenr@unizar.es}
\address[UniZar]{Departamento de Matem\'{a}tica Aplicada,
Universidad de Zaragoza, Zaragoza, Spain}
\author[UniZar]{F.J. Gaspar}\ead{fjgaspar@unizar.es}
\author[Tufts]{X. Hu}\ead{xiaozhe.hu@tufts.edu}
\address[Tufts]{Department of Mathematics, Tufts University, 
Medford, Massachusetts  02155, USA}
\author[PennState]{L.T. Zikatanov}\ead{ludmil@psu.edu}
\address[PennState]{Department of Mathematics, Penn State, 
University Park, Pennsylvania, 16802, USA}
\cortext[cor1]{Corresponding author. Tel.: +34 976762148; E-mail address: carmenr@unizar.es (C. Rodrigo)}

\begin{abstract}
  We consider finite element discretizations of the Biot's consolidation model in
  poroelasticity with MINI and stabilized P1-P1 elements. We analyze
  the convergence of the fully discrete model based on spatial
  discretization with these types of finite elements and implicit
  Euler method in time. We also address the issue related to the
  presence of non-physical oscillations in the pressure approximation
  for low permeabilities and/or small time steps. We show that even in
  1D a Stokes-stable finite element pair fails to provide a monotone
  discretization for the pressure in such regimes. We then introduce a
  stabilization term which removes the oscillations. We present
  numerical results confirming the monotone behavior of the stabilized
  schemes.
\end{abstract}

\begin{keyword}
Stable finite elements \sep monotone discretizations \sep poroelasticity.


\end{keyword}

\end{frontmatter}


\section{Introduction}\label{sec:Intro}
The theory of poroelasticity models the interaction between the
deformation and the fluid flow in a fluid-saturated porous medium.
Such coupling was already modelled in the early one-dimensional work of
Terzaghi,
see~\cite{terzaghi}, whereas the general three-dimensional mathematical model
was established by Maurice Biot in several pioneering publications
(see~\cite{biot1} and~\cite{biot2}).

We assume here that the porous medium is linearly elastic,
homogeneous, isotropic and saturated by an incompressible Newtonian
fluid.  Under these assumptions, the quasi-static Biot's model can be
written as a time-dependent system of partial differential equations
in the variables of displacements of the solid, $u$, and pressure of the
fluid, $p$,
\begin{eqnarray}
&& -\ddiv\bmz{\sigma} + \nabla p = \bmz{f},\qquad
\bmz{\sigma} = 2\mu \varepsilon(\bmz{u})  + \lambda\ddiv(\bmz{u}) I \label{eq:u_poro}\\
&& -\ddiv \bmz{\dot{u}} + \ddiv K \nabla p = g,\label{eq:p_poro}
\end{eqnarray}
where $\sigma$ and $\varepsilon$ are the effective stress and strain
tensors, $\lambda$ and $\mu$ are the Lam\'e coefficients, $K$ is the
hydraulic conductivity tensor, the right-hand term $f$ is the density
of applied body forces and the source term $g$ represents a forced
fluid extraction or injection process. The time derivative of the
displacement vector is denoted by $\dot{u}$. 
Results on the existence and uniqueness of the solution for these
models have been investigated by Showalter in~\cite{showalter}
and by Zenisek in~\cite{zenisek}, and the well-posedness for
nonlinear poroelastic models is considered, for
example, in~\cite{DDL1997}.

Biot's models are still used today in a great variety of fields,
ranging from geomechanics and petroleum engineering, where these models
have been applied ever since their discovery, to biomechanics or even
food processing more recently. Some examples of applications in
geosciences include petroleum production, solid waste disposal, carbon
sequestration, soil consolidation, glaciers dynamics, subsidence,
liquefaction and hydraulic fracturing, for instance. In biomechanics
the poroelastic theory can be used to describe tumor-induced stresses
in the brain (see~\cite{Roose2003204}), which can cause
deformation of the surrounding tissue, and bone deformation under a
mechanical load (see~\cite{Swan2003}), for example.  More recently, a
promising and innovative application studies the
food processes as a multiphase deformable porous media, in order to
improve the quality and safety of the food, see~\cite{Datta2010}.

Although some analytical solutions have been derived for some linear
poroelasticity problems, see~\cite{Coussy2004}, and even some
of them are obtained artificially as in~\cite{BarryMercer},
numerical simulations seem to be the only way to obtain quantitative
results for real applications. The numerical solution of these
problems is usually based on finite element methods, see for example
the monograph of Lewis and Schrefler in~\cite{LewisSchrefler}
and the papers in~\cite{LewisSchrefler78, LewisTran89, LewisSchreflerSimoni, MastersPaoLewis}.
Finite difference methods have been also applied to solve this
problem, see for example the convergence analysis in~\cite{CMAM_2002} and the extension to the discontinuous
coefficients case in~\cite{Lazarov2007, PhDNaumovich}.

It is well-known that approximations by standard finite difference
and finite element methods of the poroelasticity equations often
exhibit strong nonphysical oscillations in the fluid pressure, see for
instance~\cite{Gaspar2003, Ferronato2010, Langtangen2012, Favino2013, WheelerPhillip}.
For example, this is the case when linear finite elements are used to
approximate both displacement and pressure unknowns, or when a central
finite difference scheme on collocated grids is considered.
To eliminate such instabilities, approximation
spaces for the vector and scalar fields, satisfying an appropriate
inf-sup condition (see~\cite{1974BrezziF-aa}) are commonly used. Such
discretizations have been theoretically investigated by Murad et
al. in~\cite{MuradLoula92, MuradLoula94, MuradLoulaThome}.
As we show later, however, an inf-sup stable pair of spaces does not
necessarily provide oscillation-free solutions. On the other hand,
the oscillations disappear on very fine
grids, but evidently, this is not always practical.

Our work here is on investigating mechanisms for avoiding the
nonphysical oscillations in the discrete solution, for example, by
adding stabilization terms to the Galerkin formulation, while still
maintaining the accuracy of approximations. Such strategy
has been applied in~\cite{Aguilar2008} to provide a stable
scheme by using linear finite element approximations for both
unknowns. This was accomplished by adding an artificial term, namely,
the time derivative of a diffusion operator multiplied by a
stabilization parameter, to the flow equation. The stabilization
parameter, which depends on the elastic properties of the solid and on
the characteristic mesh size, was given a priori, and its optimality
was shown in the one-dimensional case. This scheme provided solutions
without oscillations independently of the chosen discretization
parameters.

In this work, we present convergence analysis of fully discrete implicit schemes
for the numerical solution of Biot's consolidation model. We derive
appropriate stabilization terms for both MINI element and P1-P1
discretizations, and numerically show that such choices of
stabilization parameters and operators remove the non-physical
oscillations in the approximations of the pressure. In this regard,
our work fills in a gap in the literature, since to our knowledge the
results presented here are the first theoretical results for fully discrete
schemes involving stabilized spatial discretizations aimed to improve
the monotonicity properties of the finite element schemes.


The rest of the paper is organized as follows. In
Section~\ref{sec:oscillations}, we provide one dimensional example
elements illustrating the undesirable oscillatory pressure
behavior. We show both numerically and theoretically, that adding
appropriate stabilization terms provide monotone discrete schemes and
we calculate the exact values of the optimal stabilization
parameters for both MINI and P1-P1 schemes. In Section~\ref{sec:Schur} we show several abstract
results on stabilized discretizations which we use in
Section~\ref{sec:fully-discrete} to analyze the convergence of the
fully discrete model. The abstract results in Section~\ref{sec:Schur}
apply to more general saddle-point problems with stabilization
terms. In this section, we have also computed the exact Schur
complement corresponding to the bubble functions in the MINI element.
Next, in Section~\ref{sec:fully-discrete} we use the abstract results
and show first order convergence in time and space for the fully
discrete Biot's consolidation model. The section~\ref{sec:numerics} is
devoted to the numerical study of the convergence and monotonicity
properties of the resulting discretizations. We use several benchmark
tests in poromechanics and show that appropriate choice of
stabilization parameters result in approximations which respect the
underlying physical behavior and are oscillation-free. Conclusions are
drawn in~Section~\ref{sec:conclusions}.

\section{Pressure oscillatory behaviour: one dimensional example}\label{sec:oscillations}
We consider an example modeling a column of height $H$ of a porous medium
saturated by an incompressible fluid, bounded by impermeable and rigid
lateral walls and bottom, and supporting a load $\sigma_0$ on the top
which is free to drain. We have the following PDEs describing this model:
\begin{equation} \label{example-int}
\begin{array}{l}
-\displaystyle \frac{\partial}{\partial x} \left( E \, \frac{\partial u}{\partial x}\right) + \displaystyle \frac{\partial p}{\partial x}=0, \\
\displaystyle \frac{\partial }{\partial t}\left (\displaystyle \frac{\partial u}{\partial
x}\right)-\displaystyle \frac{\partial}{\partial x} \left( K \, \frac{\partial p}{\partial x}\right) = 0,
\end{array}
(x,t)\in (0,H)\times(0,T],
\end{equation}
with boundary and initial conditions
$$
\begin{array}{l}
\displaystyle E \, \frac{\partial u}{\partial x}(0,t)=\sigma_0, \quad p(0,t)=0, \ t\in (0,T], \\
u(H,t)=0,  \ \displaystyle K \frac{\partial p}{\partial x}(H,t)=0, \ t\in (0,T], \\
\displaystyle \frac{\partial u}{\partial x}(x,0)=0, \ x\in [0,H],
\end{array}
$$
where $E$ is the Young's modulus and $K$ is the hydraulic
conductivity. It can be easily seen that problem \eqref{example-int}
is decoupled, giving rise to the following heat-type equation for the
pressure
\begin{equation} \label{eq:pressure}
\displaystyle \frac{\partial }{\partial t}\left (\displaystyle \frac{1}{E} \, p \right)-\displaystyle \frac{\partial}{\partial x} \left( K \, \frac{\partial p}{\partial x}\right) = 0.
\end{equation}
In order to discretize problem \eqref{example-int}, we consider a
non-uniform partition of spatial domain $\Omega=(0,H)$,
\[
0=x_0 < x_1 < \ldots < x_{n-1} < x_n = H.
\]
In this way, the domain $\Omega$ is given by the disjoint union of
elements $T_i=[x_i,x_{i+1}], \; 0 \leq i \leq n-1$, of size
$h_i = x_{i+1}-x_i$.  We assume that the Young modulus $E(x)$ and the
hydraulic conductivity $K(x)$ are constants $E_i$ and $K_i$ on each
element $T_i$.  Next, we are going to analyze two discretizations by
two different pairs of finite elements with a backward Euler method in
time.

\subsection{Discretization with linear finite elements}
First, we discretize using linear finite elements for both displacement and
pressure. In this case, the following linear system of equations has
to be solved on each time step
\begin{equation} \label{linear_system}
\left[
\begin{array}{cc}
A_l & G_l \\
G_l^T & \tau A_p
\end{array}
\right] \left[
\begin{array}{c}
U_l^m \\ P^m
\end{array}
\right] = \left[
\begin{array}{cc}
0 & 0 \\
G_l^T & 0
\end{array}
\right]
\left[
\begin{array}{c}
U_l^{m-1} \\ P^{m-1}
\end{array}
\right] +
\left[
\begin{array}{c}
f_l^m \\ 0
\end{array}
\right],
\end{equation}
where $m \geq 1$, and $\tau$ is the time discretization parameter. It is clear that the pressure at time level $m$ must
satisfy the following equation
\begin{equation} \label{Schur}
(C_l + \tau A_p) P^{m} = C_l P^{m-1}  - G_l^T A_l^{-1} (f_l^m -f_l^{m-1}),
\end{equation}
where $C_l=-G_l^T A_l^{-1} G_l$ is a tridiagonal matrix such that for an interior node~$x_i$ it is given by
\begin{equation} \label{C_linear}
(C_l P^m)_i = \frac{1}{4} \left(\displaystyle \frac{h_{i-1}}{E_{i-1}} P_{i-1}^m + \left(\frac{h_{i-1}}{E_{i-1}} + \frac{h_{i}}{E_{i}} \right) P_{i}^m +
\frac{h_{i}}{E_{i}} P_{i+1}^m \right).
\end{equation}
Notice that the scheme associated with the above equation should be an appropriate discretization for problem \eqref{eq:pressure}. Depending on the relation between the space and time discretization parameters, the off-diagonal elements of matrix $C_l + \tau A_p$ could be positive
and therefore the cause of possible non-physical oscillations in the approximation of the pressure. To avoid these instabilities, the following restriction holds,
\begin{equation}\label{Restr1}
\max_{0 \leq i \leq n-1} \displaystyle \frac{h_i^2}{4 K_i E_i} < \tau.
\end{equation}
For example, in the case of an uniform-grid of size $h$ and constant values of the parameters $E$ and $K$ in the whole domain, such restriction becomes
$h^2 < 4 E K \tau$. To confirm these unstable behavior, we solve system \eqref{example-int} in the computational domain $(0,1)$ by using linear finite elements considering $K \, E \, \tau = 10^{-6}$. In this case, it is necessary a mesh of at least $500$ nodes to fulfill the restriction.
\begin{figure}
\begin{tabular}{cc}
\includegraphics[scale = 0.4]{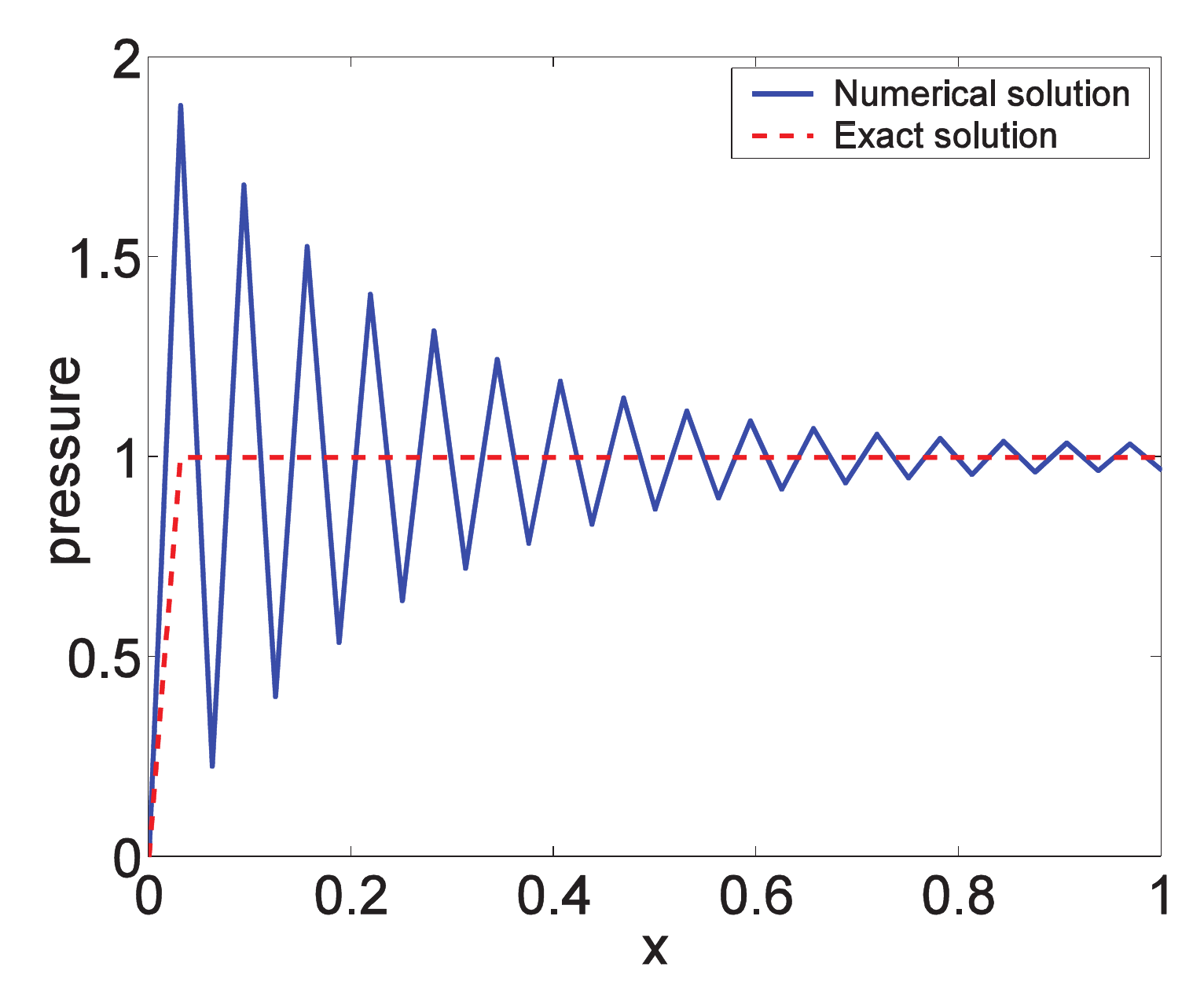}
&
\includegraphics[scale = 0.4]{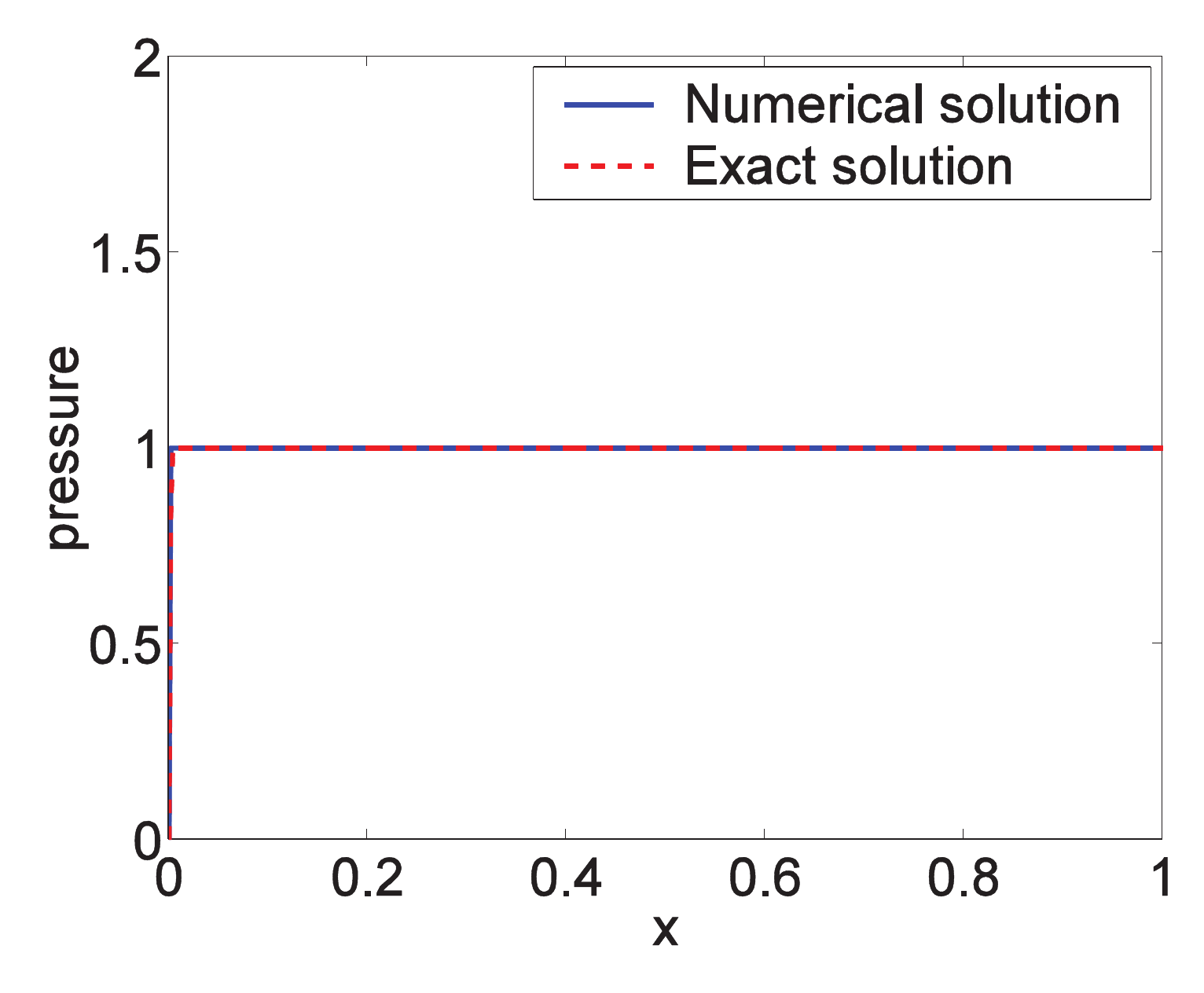}
\\
(a)
&
(b)
\end{tabular}
\caption{Numerical solution for the pressure field obtained with finite elements P1-P1 and corresponding exact solution for (a) $h=1/32$ and (b) $h=1/500$. }\label{figuras_p1_p1}
\end{figure}
In Figure~\ref{figuras_p1_p1} we show the corresponding
approximation of the pressure at the first time step, for two
different values of $h$, that is, (a) $h=1/32$ and (b) $h=1/500$. Besides, we
have plotted the analytical solution of the problem (see~\cite{Aguilar2008}). We can observe that strong non-physical
oscillations appear for this type of finite element approximations,
when the space discretization parameter is not small enough. It is
clear that this is due to a lack of monotonicity of the scheme.
At a first glance, it appears that these oscillations might be related
to the locking effect and/or the fact that
the pair of finite element does not satisfy an inf-sup
condition. However, since our test is an one-dimensional problem,
elastic locking can not appear, and therefore, in general, this can
not be the only cause of this oscillatory behavior.

\subsection{Discretization with Taylor-Hood elements}
We consider the Taylor-Hood finite element method
proposed in~\cite{1973TaylorC_HoodP-aa} approximating the
displacement by continuous piecewise quadratic functions and the
pressure by continuous piecewise linear functions. It is well-known
that this pair of finite elements provides a stable discretization for
the Stokes equation and satisfies inf-sup condition.
Following similar computations as for the P1-P1 case, and
we obtain the following linear system of equations on each time step
\begin{equation} \label{linear_system_b}
\left[
\begin{array}{ccc}
A_b & 0 & G_b \\
0 & A_l & G_l \\
G_b^T & G_l^T & \tau A_p
\end{array}
\right] \left[
\begin{array}{c}
U_b^m \\ U_l^m \\ P^m
\end{array}
\right] = \left[
\begin{array}{ccc}
0 & 0 & 0 \\
0 & 0 & 0 \\
G_b^T & G_l^T & 0
\end{array}
\right]
\left[
\begin{array}{c}
U_b^{m-1} \\ U_l^{m-1} \\ P^{m-1}
\end{array}
\right] +
\left[
\begin{array}{c}
f_b^m \\ f_l^m \\ 0
\end{array}
\right],
\end{equation}
where $A_l, G_l$ correspond again to the linear basis functions
whereas $A_b, G_b$ are associated with the bubble basis functions.  In
this case, the pressure at time level $m$ satisfies the equation
\begin{equation} \label{Schur2}
(C_l + C_b + \tau A_p) P^{m} = (C_l +
  C_b) P^{m-1} - G_l^T A_l^{-1} (f_l^m -f_l^{m-1}) - G_b^T A_b^{-1}
  (f_b^m -f_b^{m-1}),
\end{equation}
where $C_l$ is as in \eqref{C_linear} and $C_b=-G_b^T A_b^{-1} G_b$ is given by
\[
(C_b P^m)_i = \frac{1}{12} \left(\displaystyle -\frac{h_{i-1}}{E_{i-1}} P_{i-1}^m + \left(\frac{h_{i-1}}{E_{i-1}} + \frac{h_{i}}{E_{i}} \right) P_{i}^m -
\frac{h_{i}}{E_{i}} P_{i+1}^m \right).
\]
Note that the off-diagonal entries of matrix $C_b$ are non-positive,
but again depending on the values of the
parameters, the whole matrix $C_l + C_b + \tau A_p$ can still have
positive off-diagonal terms. To avoid this, on each element the
restriction
\begin{equation}\label{Restr2}
\max_{0 \leq i \leq n-1} \displaystyle \frac{h_i^2}{6 K_i E_i} < \tau.
\end{equation}
must be fulfilled.

In summary, the use of quadratic finite elements for displacement
does contributes towards the reduction of the non-physical oscillations,
but is still not enough to eliminate them.

To illustrate this behavior, we consider again system
\eqref{linear_system_b} on an uniform grid of size $h$ and constant
coefficients $E$ and $K$. In this particular case, the restriction
\eqref{Restr2} is simplified to $h^2 < 6 E K \tau$, and when
$E K \tau = 10^{-6}$ it is deduced that $409$ nodes are needed to
ensure a non-oscillatory behavior. In Figure \eqref{figuras_p2_p1} we
show the corresponding approximation of the pressure at the first time
step, for two different values of $h$, that is, $h = 1/32$ and
$h = 1/409$.  Notice again that in the first case the pressure is not
monotone (oscillations show up), which shows that
the inf-sup condition is not enough for the monotonicity of the discretization.
\begin{figure}
\begin{tabular}{cc}
\includegraphics[scale = 0.4]{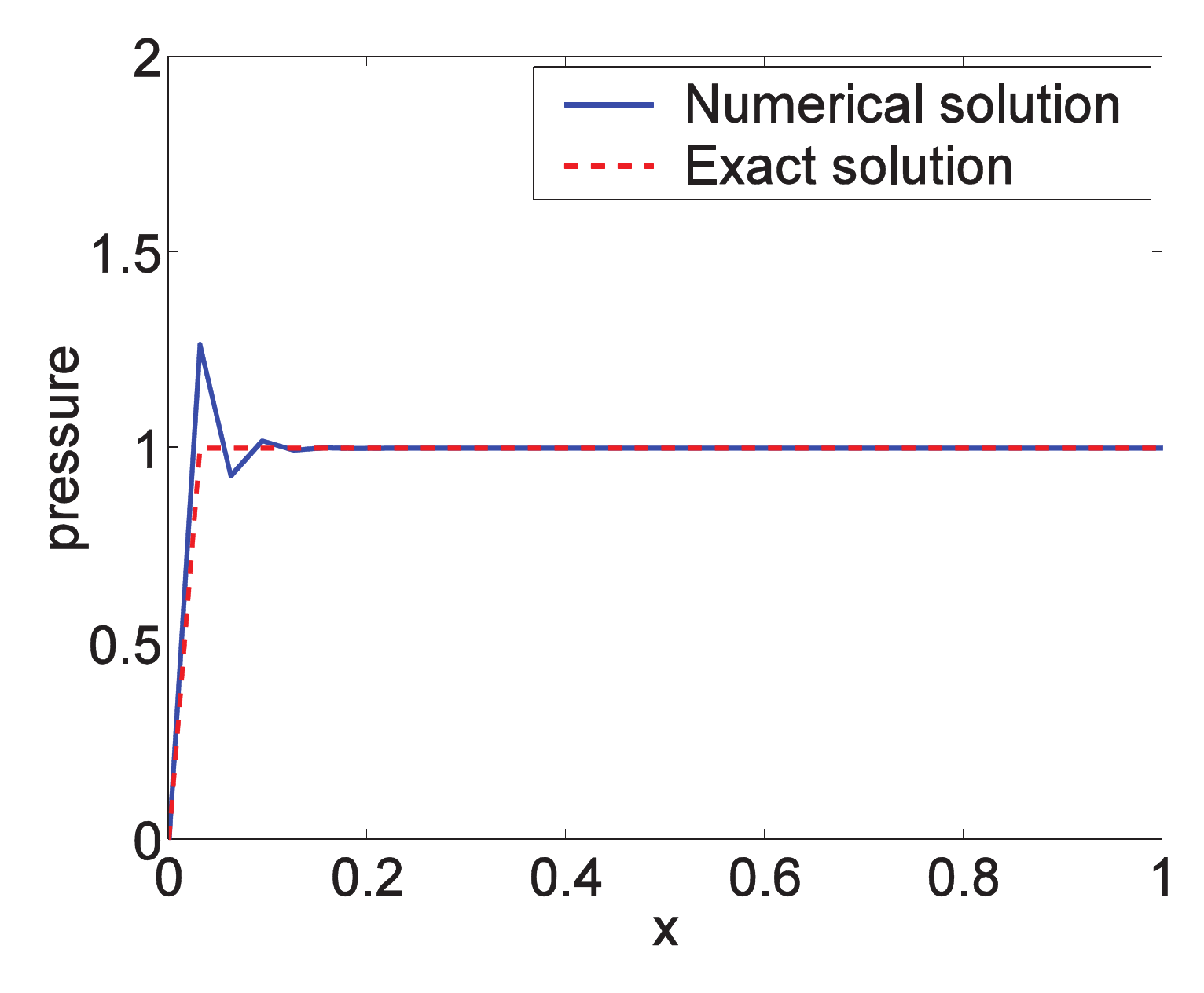}
&
\includegraphics[scale = 0.4]{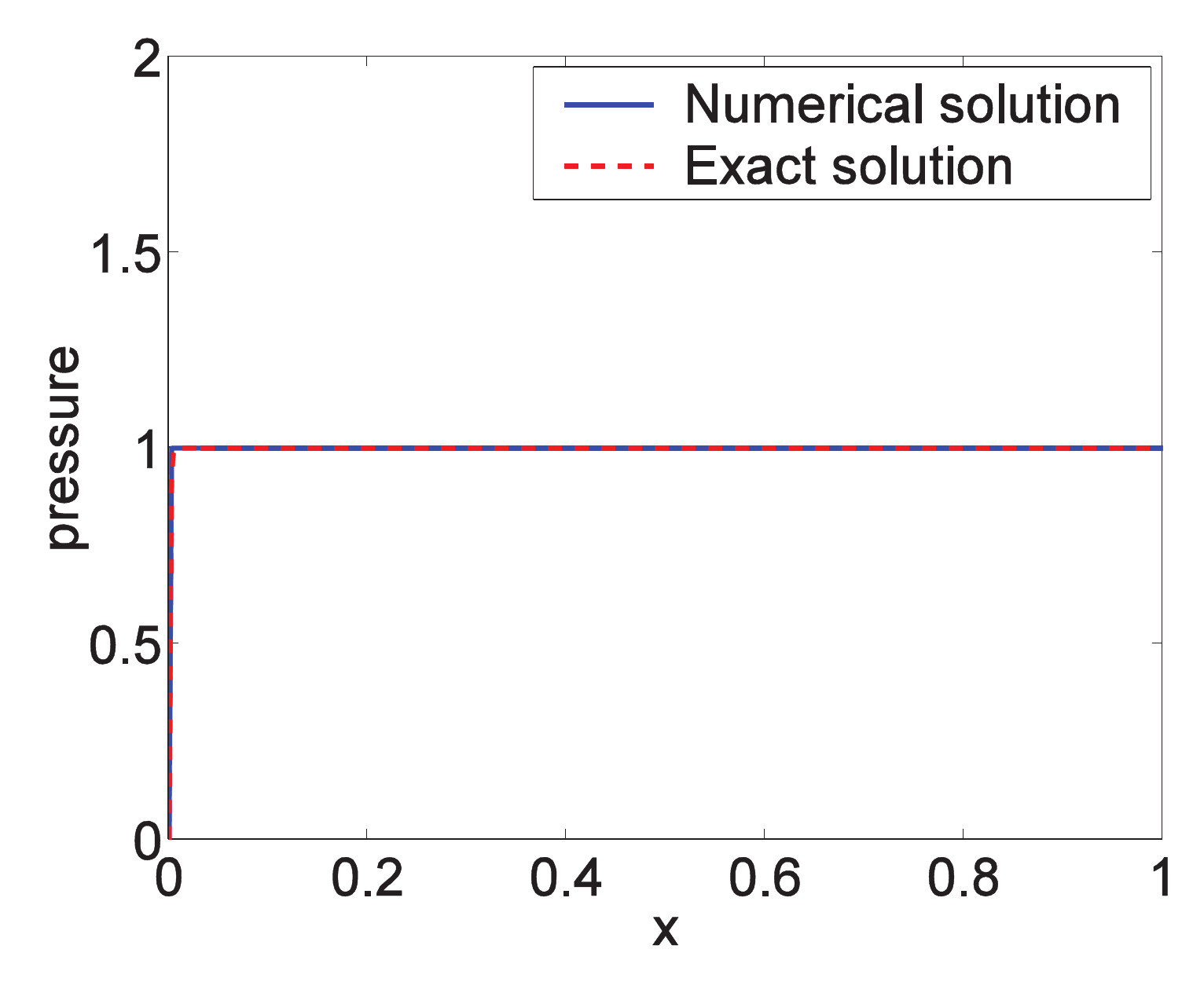}
\\
(a)
&
(b)
\end{tabular}
\caption{Numerical solution for the pressure field obtained with
  finite elements P2-P1 and corresponding exact solution for (a)
  $h=1/32$ and (b) $h=1/409$. }\label{figuras_p2_p1}
\end{figure}

\subsection{Monotone discretizations using perturbations}
To avoid the restrictions \eqref{Restr1} for P1-P1 and \eqref{Restr2}
for P2-P1 which result in the requirement for using very small mesh
size, we are going to introduce a perturbation
which will lead to monotone (and accurate) discretization
independently of the chosen parameters.

One way to achieve this is to add stabilization terms so that the
discretizations \eqref{Schur} and \eqref{Schur2} correspond to the
standard monotone linear finite element discretization of the
parabolic (heat) equation \eqref{eq:pressure}.
We define the following tridiagonal matrix
\begin{equation} \label{add_artificial}
(A_{\varepsilon} P^m)_i = \varepsilon \left(\displaystyle -\frac{h_{i-1}}{E_{i-1}} P_{i-1}^m + \left(\frac{h_{i-1}}{E_{i-1}} + \frac{h_{i}}{E_{i}} \right) P_{i}^m -
\frac{h_{i}}{E_{i}} P_{i+1}^m \right),
\end{equation}
where $\varepsilon=1/4$ for the linear finite element pair and
$\varepsilon=1/6$ for the Taylor--Hood method. Then, it is clear that
the perturbation of scheme \eqref{Schur}
\begin{equation} \label{pertur}
(C_l + A_{\varepsilon} + \tau A_p) P^{m} = (C_l + A_{\varepsilon}) P^{m-1}  - G_l^T A_l^{-1} (f_l^m -f_l^{m-1}),
\end{equation}
or the perturbation of \eqref{Schur2}
\begin{equation} \label{pertur2}
(C_l + C_b + A_{\varepsilon} + \tau A_p) P^{m} = (C_l + C_b + A_{\varepsilon}) P^{m-1}  - G_l^T A_l^{-1} (f_l^m -f_l^{m-1}) - G_b^T A_b^{-1} (f_b^m -f_b^{m-1}),
\end{equation}
gives the standard discretization of ~\eqref{eq:pressure} by linear
finite element method with mass-lumping. We also note that this
perturbation corresponds to adding the following term to the second
equation in~\eqref{example-int}
\begin{equation}\label{fvar}
\varepsilon \sum_{i=0}^{n-1} \frac{h_i^2}{E_i} \int_{T_i} \left(\frac{\nabla p_h^{m+1}-\nabla p_h^m}{\tau} \right) \cdot \nabla q_h \, {\rm d}x.
\end{equation}
Finally, in Figure~\ref{figuras_stabilization} we show the
approximation for the pressure obtained using the stabilized scheme
for both the linear finite element pair and the Taylor--Hood method
with $h = 1/32$ and we obtain monotone approximation for the
pressure.



\begin{figure}
\begin{tabular}{cc}
\includegraphics[scale = 0.4]{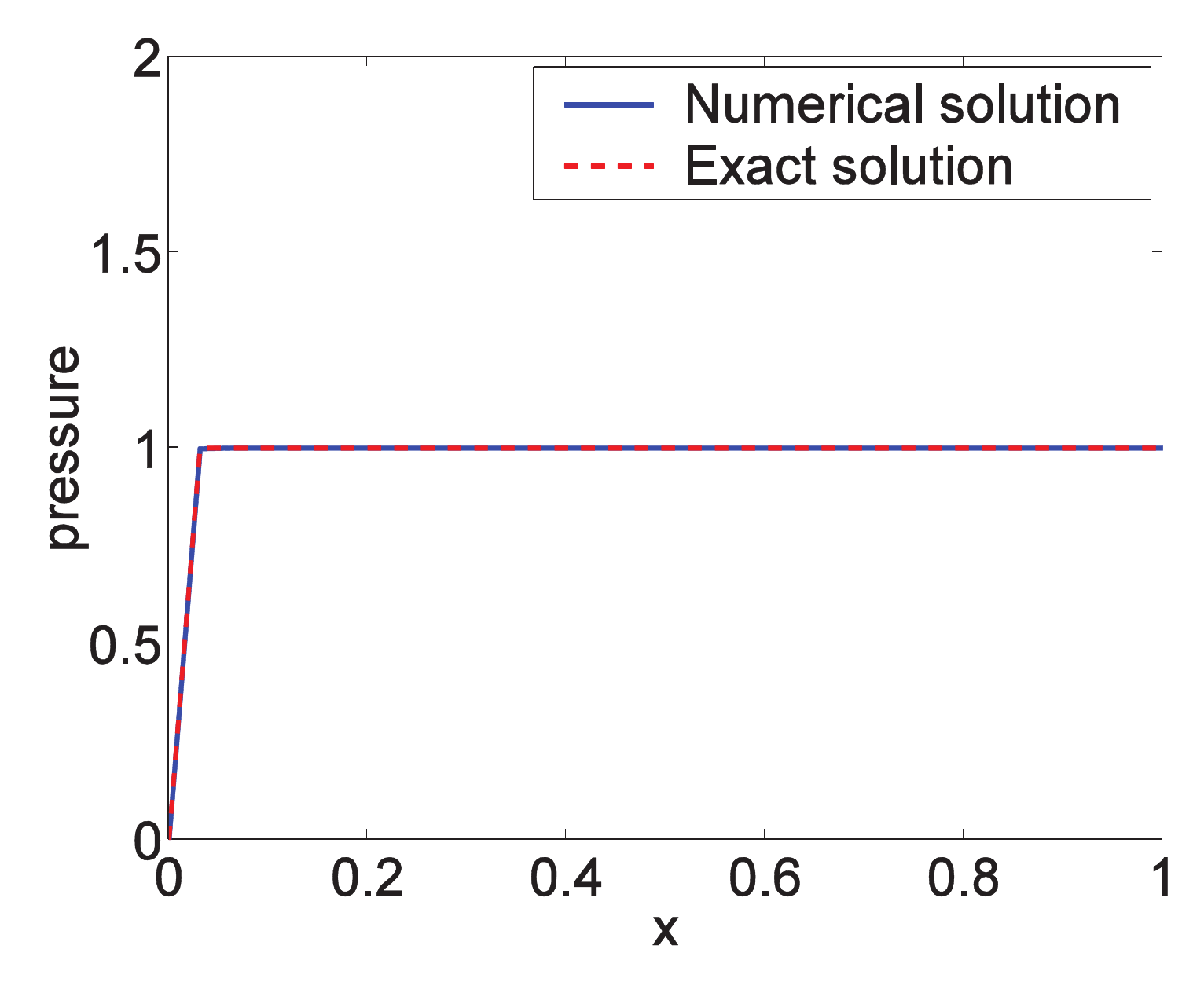}
&
\includegraphics[scale = 0.4]{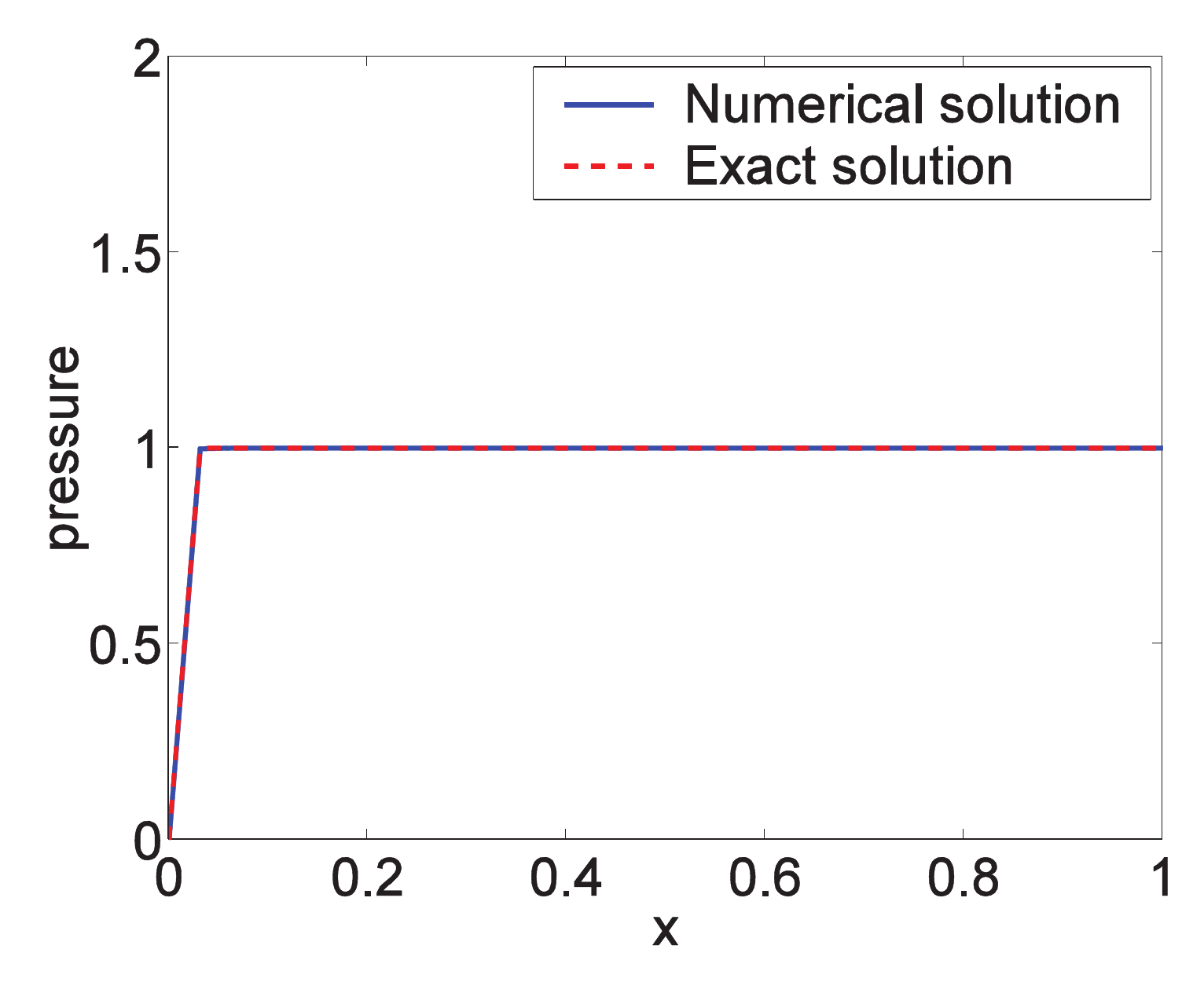}
\\
(a)
&
(b)
\end{tabular}
\caption{Numerical solution for the pressure field obtained with the stabilized finite elements (a) P1-P1 and (b) P2-P1  and corresponding exact solution. }\label{figuras_stabilization}
\end{figure}


\section{Stability of discretizations and perturbations of Biot's model}
\label{sec:Schur}
In this section we provide results on the stability of discretizations
of saddle point problems that can be viewed as perturbations of the
Stokes equations. By stability, here, we mean bounds on the inverse of the
discrete operator (for a fixed time step).  We prove inf-sup condition for different
discretizations for the poroelasticity problem, more precisely for
MINI element and stabilized P1-P1 schemes.  Such results are
well-known for Stokes equations (see,
e.g.~\cite{1984ArnoldD_BrezziF_FortinM-aa, 1991BrezziF_FortinM-aa, 2013BoffiD_BrezziF_FortinM-aa}).

We hope that the results given below in Section~\ref{section:abstract} will
be useful in other situations.
We note that the generality of the abstract results allows us to use
an unweighted $L^2$ norm for the pressure (not only an energy norm),
which gives new estimates in the analysis of the fully discretized time dependent
Biot's model.

\subsection{Stability of a class of saddle point problems with
  perturbation}\label{section:abstract}
In this section, we consider operators of the form
\begin{equation}\label{AC}
\mathcal{A}_C=\begin{pmatrix}
A & B'\\
B & -C
\end{pmatrix}: V\times Q \mapsto V'\times Q',
\end{equation}
where $V$ and $Q$ are Hilbert spaces and $V'$ and $Q'$ are their dual
spaces. Here, $\langle\cdot,\cdot\rangle$ is the standard duality
pairing and $B': Q\mapsto V'$ is the adjoint of $B$.
We make the following assumptions on $A$ and $C$.
\begin{enumerate}

\item[(A1)] The operator $A:V\mapsto V'$ is bounded, selfadjoint and  positive definite.
Thus, $A$ provides
  a scalar product $(\cdot,\cdot)_A=\langle A \cdot,\cdot\rangle$ and a norm on
  $V$ denoted by $\|\cdot\|_A$. The Hilbert Space $V$ is then equipped with this inner
 product and norm, and we have that
\begin{eqnarray*}
&&\|v\|_A^2:=\langle  Av,v\rangle,\quad
\|f\|_{V'}^2:=\langle  f,A^{-1}f \rangle,
\quad \mbox{for all}\quad v\in V, \quad f\in V'\\
&&   \|A\|_{V\mapsto V'}=\|A^{-1}\|_{V'\mapsto V}=1.
\end{eqnarray*}
\item[(A2)] The operator $B:V\mapsto Q'$ is bounded.

\item[(A3)] Similarly to $A$, the operator $C:Q\mapsto Q'$ is bounded, selfadjoint  and positive
  (semi)definite.  Thus on $Q$ we have a norm (or a semi-norm) denoted by
  $\|\cdot\|_C$
\end{enumerate}
We introduce a norm on $V\times Q$:
\begin{equation}\label{norm-definition}
\triplenorm{(u,p)}^2= \|u\|_A^2 + \|p\|_C^2 +  \|p\|^2.
\end{equation}
We note that if $C$ is only semidefinite, then $\|\cdot\|_C$ is only a
seminorm on $Q$.
Here $\|\cdot\|$ denotes the norm on $Q$ and $\triplenorm{\cdot}$ is
the norm on $V\times Q$ in which we will prove stability estimates for the operator
$\mathcal{A}_C$.

Clearly, $\mathcal{A}_C$ can be viewed as a perturbation of
$\mathcal{A}_0$, i.e. the operator with $C=0$.
For detailed discussion on perturbations of such saddle point problems,
we refer the reader to the recent monograph by Boffi, Brezzi and Fortin~\cite{2013BoffiD_BrezziF_FortinM-aa}.

We now state and prove a necessary and sufficient condition for
$\mathcal{A}_C$ to be isomorphism under the assumptions (A1)-(A3).
More general results also hold (with $A$ only invertible on a
subspace, etc), but to prove them would require more elaborate
arguments and such generality is beyond the scope of our
considerations here. We have the following theorem.

\begin{theorem}\label{theorem:stokes_poro}
  Assume that (A1)-(A3) hold. Then $\mathcal{A}_C$ defined
  in~\eqref{AC} is an isomorphism if and only if the operator $B$
  satisfies the following inf-sup condition: For any $q\in Q$ we have
\begin{equation} \label{ine:inf-sup}
\sup_{v\in V} \frac{\langle Bv, q\rangle}{\|v\|_A} \ge \gamma_B \|q\|
- \| q \|_{C}
\end{equation}
\end{theorem}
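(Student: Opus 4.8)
The plan is to prove both implications of the equivalence by relating the solvability of the saddle point system $\mathcal{A}_C(u,p)=(f,g)$ to a Babuška–Brezzi type argument adapted to the perturbed setting. The natural object to control is the full bilinear form
\[
\mathcal{B}((u,p),(v,q)) = \langle Au,v\rangle + \langle B'p,v\rangle + \langle Bu,q\rangle - \langle Cp,q\rangle,
\]
and the claim is that $\mathcal{A}_C$ is an isomorphism iff this form satisfies a global inf-sup condition on $V\times Q$ with respect to $\triplenorm{\cdot}$; the theorem then follows by showing this global condition is in turn equivalent to~\eqref{ine:inf-sup}. Boundedness of $\mathcal{A}_C$ in the $\triplenorm{\cdot}$ norm is immediate from (A1)–(A3), so the whole content is the lower bound.

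For the direction \emph{"inf-sup $\Rightarrow$ isomorphism"}: given $(u,p)\in V\times Q$, I would construct a test pair $(v,q)$ for which $\mathcal{B}((u,p),(v,q)) \gtrsim \triplenorm{(u,p)}^2$ while $\triplenorm{(v,q)}\lesssim\triplenorm{(u,p)}$. The standard recipe is to take $q=-p$ first, which produces $\langle Au,u\rangle + \langle Cp,p\rangle = \|u\|_A^2 + \|p\|_C^2$ after cancellation of the off-diagonal terms — this already controls two of the three pieces of $\triplenorm{(u,p)}^2$ but misses the bare $\|p\|^2$. To recover $\|p\|^2$ I would use~\eqref{ine:inf-sup}: pick $w_p\in V$ with $\|w_p\|_A\le 1$ and $\langle Bw_p,p\rangle \ge \gamma_B\|p\| - \|p\|_C$, set $v = u - \delta \|p\| w_p$ for a small parameter $\delta>0$ to be fixed, and test against $(v,-p)$. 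Expanding, the term $-\delta\|p\|\langle Bw_p,p\rangle$ contributes $-\delta\gamma_B\|p\|^2 + \delta\|p\|\,\|p\|_C$ up to sign bookkeeping; combined with the diagonal terms and after absorbing cross terms via Young's inequality (using $\|w_p\|_A\le 1$ to bound $\|v\|_A$), choosing $\delta$ small enough yields $\mathcal{B}((u,p),(v,-p)) \ge c\,\triplenorm{(u,p)}^2$ with $\triplenorm{(v,-p)}\le C\triplenorm{(u,p)}$. A symmetric/transpose argument (or the observation that $\mathcal{A}_C^{T}=\mathcal{A}_C^*$ has the same structure) gives the inf-sup in the other argument, and by the Banach–Nečas–Babuška theorem $\mathcal{A}_C$ is an isomorphism onto $V'\times Q'$.

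For the converse \emph{"isomorphism $\Rightarrow$ inf-sup"}: suppose~\eqref{ine:inf-sup} fails, i.e. there is a sequence $q_n$ with $\|q_n\|=1$, $\|q_n\|_C\to s$ for some $s\ge 0$, and $\sup_{v}\frac{\langle Bv,q_n\rangle}{\|v\|_A} \to \ell$ with $\ell < \gamma_B - s$ for every admissible $\gamma_B$ — more precisely, I would show that the best constant $\gamma_B$ for which~\eqref{ine:inf-sup} can hold is exactly $\gamma_B^* = \inf_{\|q\|=1}\big(\sup_v \tfrac{\langle Bv,q\rangle}{\|v\|_A} + \|q\|_C\big)$, and argue that if $\gamma_B^*=0$ then $\mathcal{A}_C$ cannot be surjective. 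Concretely, with $q_n$ as above one solves $Au_n = -B'q_n$ (so $\|u_n\|_A = \sup_v\langle Bv,q_n\rangle/\|v\|_A \to \ell$) and looks at $\mathcal{A}_C(u_n,q_n) = (0,\, Bu_n - Cq_n)$; one checks that $\|Bu_n - Cq_n\|_{Q'} \le \|u_n\|_A + \|q_n\|_C$ stays bounded (in fact one shows it tends to something controlled by $\ell + s$), whereas $\triplenorm{(u_n,q_n)}\ge \|q_n\| = 1$ stays bounded below, and when $\gamma_B^* = 0$ a refined choice makes $\|\mathcal{A}_C(u_n,q_n)\|\to 0$ while $\triplenorm{(u_n,q_n)}\not\to 0$, contradicting the boundedness of $\mathcal{A}_C^{-1}$. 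The cleanest route is probably to establish the exact identity $\triplenorm{\mathcal{A}_C^{-1}}^{-1}$-type characterization via the Schur complement $S = C + BA^{-1}B'$ on $Q$, showing $\mathcal{A}_C$ is an isomorphism iff $S$ is coercive on $Q$ with respect to $\|\cdot\|_C^2 + \|\cdot\|^2$, and then noting $\langle Sq,q\rangle = \|q\|_C^2 + \sup_v \langle Bv,q\rangle^2/\|v\|_A^2$, so that $S$-coercivity in $\|\cdot\|$ is precisely $\big(\sup_v\langle Bv,q\rangle/\|v\|_A\big)^2 + \|q\|_C^2 \ge c\|q\|^2$, which is equivalent to~\eqref{ine:inf-sup} by elementary manipulation (taking square roots, using $\sqrt{a^2+b^2}\ge \tfrac{1}{\sqrt 2}(a+b)$ and $a \ge \sqrt{a^2+b^2} - b$).

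The main obstacle I anticipate is the bookkeeping in the forward direction when $C$ is only semidefinite: the term $\|p\|_C$ on the right of~\eqref{ine:inf-sup} has a "bad" sign and must be absorbed carefully, and one has to make sure the Young's inequality constants and the choice of $\delta$ are compatible so that $\|p\|_C^2$ is not over-spent on both controlling itself and feeding the $\|p\|^2$ estimate. Getting the inf-sup with respect to the $A$-norm on $v$ (rather than a generic $V$-norm) is what makes the constants track cleanly, since then $\|w_p\|_A\le 1$ directly and no norm-equivalence constants intrude. The Schur-complement reformulation sidesteps most of this and is the version I would actually write up.
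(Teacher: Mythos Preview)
Your proposal is correct and essentially matches the paper's proof. For the forward direction the paper uses exactly your test pair $(v,q)=(u+\theta w,\,-p)$ (with the sign corrected to $+$, as you flag) where $\|w\|_A=\|p\|$, and fixes $\theta=\gamma_B/2$ explicitly; for the reverse direction the paper takes the direct route rather than your contrapositive/Schur-complement packaging, setting $v_q=-A^{-1}B'q$ so that $\mathcal{A}_C(v_q,q)=(0,Bv_q-Cq)$ and then bounding $\|Cq\|_{Q'}\le \sqrt{\|C\|}\,\|q\|_C$ and $\|Bv_q\|_{Q'}\le \|B'\|\sup_w\langle Bw,q\rangle/\|w\|_A$ directly---but the key step (choosing $v_q=-A^{-1}B'q$) is the same one underlying both of your sketches.
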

\begin{proof}
We first assume that~\eqref{ine:inf-sup} holds and we introduce the
bilinear form
\begin{equation*}
\langle \mathcal{A}_{C} (u,p); (v,q)\rangle = \langle Au, v \rangle + \langle Bv, p \rangle + \langle Bu, q \rangle - \langle Cp,q \rangle
\end{equation*}
It is easy to verify that the operator $\mathcal{A}_{C}$ is bounded in
$\triplenorm{\cdot}$ since both $A$ and $B$ are continuous.  From the
inf-sup condition~\eqref{ine:inf-sup}, for any $p$, there exist
$w \in V$, such that
$\langle Bw, p \rangle \geq ( \gamma_B \| p \| - \| p \|_C ) \|w\|_A$.
Since this inequality does not change when we multiply w by a positive
scalar, without loss of generality, we may assume that
$\|w\|_A=\|p\|$. We then have,
\begin{equation*}
\langle Bw, p \rangle \geq ( \gamma_B \| p \| - \| p \|_C ) \| p \|.
\end{equation*}
For a given pair $(u,p)\in V\times Q$ and with $w$ defined as above,
we choose $v = u + \theta w$, and, $q = -p$, with some $\theta>0$ to be
determined later.  Using the inf-sup condition, the fact that
$\|w\|_A=\|p\|$ and applying some obvious inequalities, such as,  $ab \geq
-\frac{1}{2\theta}a^2-\frac{\theta}{2}b^2$,  we have
\begin{eqnarray*}
\langle \mathcal{A}_{C} (u,p); (v,q)\rangle  & = &
\langle Au, u+ \theta w \rangle +
\langle B (u + \theta w), p \rangle - \langle Bu, p \rangle + \langle C p, p \rangle \\
 	& = & \|u\|_A^2 + \theta \langle Au, w \rangle + \theta\langle B w, p \rangle
 + \| p \|^2_{C} \\
 		& \geq& \frac{1}{2} \|u\|_A^2 -
 \frac{\theta^2}{2}\|p\|^2 + \theta\gamma_B\| p \|^2 - \theta\|p\|_C \| p \| + \| p \|_{C}^2\\
		& \geq& \frac{1}{2} \|u\|_A^2 +
\left(\theta\gamma_B-\frac{\theta^2}{2}\right)\|p\|^2 -
\theta\left(\frac{1}{2\theta}\|p\|_C^2 + \frac{\theta}{2} \| p \|^2\right) + \| p \|_{C}^2.
\end{eqnarray*}
Since the inequality above holds for any $\theta>0$, we choose
$\theta=\frac{\gamma_B}{2}$ to obtain that
\begin{eqnarray*}
\langle \mathcal{A}_{C} (u,p); (v,q)\rangle
& \ge  & \frac12\|u\|_A^2+\frac{\gamma_B^2}{4}\| p \|^2 +\frac12\|p\|_C^2
\geq \widetilde{\gamma}\triplenorm{(u,p)}^2
\end{eqnarray*}
where
$\widetilde{\gamma} = \frac14\min \{ 2, \gamma_B^2\} $.  On the other
hand, the triangle inequality implies that
\[
\triplenorm{(v,q)} = \triplenorm{(u+\theta w,p)}\leq
\widetilde{\gamma}_1\triplenorm{(u,p)},
\]
with $\widetilde{\gamma}_1$ depending only on $\gamma_B$. Hence,
\begin{equation*}
\sup_{v, q} \frac{\langle \mathcal{A}_{C}(u,p);(v,q)
  \rangle}{\triplenorm{(v,q)}} \geq
\gamma\triplenorm{(u,p)}, \quad \gamma = \frac{\widetilde{\gamma}}{\widetilde{\gamma}_1}
\end{equation*}
which shows that $\mathcal{A}_{C}$ is an isomorphism.

To prove the other direction, that the invertibility of
$\mathcal{A}_C$ implies condition~\eqref{ine:inf-sup}, for any $q\in Q$, we define
$v_q = -A^{-1}B' q\in V$. Since
$\mathcal{A}_{C}\begin{pmatrix}v_q\\q\end{pmatrix}
= \begin{pmatrix}0\\ Bv_q-Cq\end{pmatrix}$
the invertibility of $\mathcal{A}_{C}$ implies that
\[
\|q\|\le \triplenorm{(v_q,q)}\le
\|\mathcal{A}_{C}^{-1}\|
\;\|Bv_q-Cq\|_{Q'}\le
\|\mathcal{A}_{C}^{-1}\|\;(\| B v_q \|_{Q'} +\| C q\|_{Q'}).
\]
Since $C$ is symmetric and positive (semi)-definite, we have
$\pairq{C q}{s} \le\sqrt{\pairq{Cq}{q}}\sqrt{\pairq{Cs}{s}}$. Hence,
\[
\|C q\|_{Q'} = \sup_{s\in Q}\frac{\pairq{C q}{s}}{\|s\|} \le
\sqrt{\|C\|\pairq{C q}{q}}.
\]
To estimate $\|Bv_q\|_{Q'}$ we observe that
$\| B v_q\|_{Q'} = \sup_{s\in Q}\frac{\pairq{B v_q}{s}}{\|s\|}$ and
we also have for all $s\in Q$,
\begin{eqnarray*}
\frac{|\pairq{Bv_q}{s}|}{\|s\|} & = &
\frac{|\pairv{B's}{A^{-1}B'q}|}{\|s\|} \le \|B'\|
\frac{|\pairv{B's}{A^{-1}B'q}|}{\|B's\|_{V'}} \\
& \le &
\|B'\|\sup_{f\in V'}\frac{\pairv{f}{A^{-1}B'q}}{\|f\|_{V'}}=
\|B'\|\sup_{w\in V}\frac{\pairv{Aw}{A^{-1}B'q}}{\|Aw\|_{V'}}\\
& \le &
\|B'\|\|A^{-1}\|\sup_{w\in V}\frac{\pairq{Bw}{q}}{\|w\|_{A}}
=\|B'\|\sup_{w\in V}\frac{\pairq{Bw}{q}}{\|w\|_{A}}.
\end{eqnarray*}
The inf-sup condition~\eqref{ine:inf-sup} easily follows by
combining the last two estimates.
\end{proof}

We have the following immediate corollaries.
\begin{corollary}\label{corollary1}
  Suppose that (A1)-(A3) hold. If $\mathcal{A}_0$ is an isomorphism,
  then $\mathcal{A}_C$ is an isomorphism for all continuous and positive
  (semi-)definite $C$.
\end{corollary}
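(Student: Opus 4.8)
The plan is to deduce the corollary directly from Theorem~\ref{theorem:stokes_poro}, which characterizes the invertibility of $\mathcal{A}_C$ purely in terms of the inf-sup condition~\eqref{ine:inf-sup}. The key observation is that the right-hand side of~\eqref{ine:inf-sup} is monotone decreasing in the seminorm $\|\cdot\|_C$: enlarging $C$ only \emph{weakens} the requirement imposed on $B$. So the strategy is to run Theorem~\ref{theorem:stokes_poro} in both directions — necessity for $C=0$, then sufficiency for the given $C$.

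First I would apply the necessity part of Theorem~\ref{theorem:stokes_poro} to the unperturbed operator $\mathcal{A}_0$ (the case $C=0$, for which $\|\cdot\|_C\equiv 0$). Since $\mathcal{A}_0$ is assumed to be an isomorphism, this yields a constant $\gamma_B>0$ such that, for every $q\in Q$,
\[
\sup_{v\in V}\frac{\langle Bv,q\rangle}{\|v\|_A}\ \ge\ \gamma_B\|q\|.
\]
Now let $C:Q\mapsto Q'$ be any bounded, selfadjoint, positive semidefinite operator as in (A3), with induced (semi)norm $\|\cdot\|_C$. Because $\|q\|_C\ge 0$ for all $q\in Q$, the previous inequality immediately gives
\[
\sup_{v\in V}\frac{\langle Bv,q\rangle}{\|v\|_A}\ \ge\ \gamma_B\|q\|\ \ge\ \gamma_B\|q\|-\|q\|_C ,
\]
so condition~\eqref{ine:inf-sup} holds for the pair $(B,C)$ with the \emph{same} constant $\gamma_B$ coming from the unperturbed problem.

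Finally I would invoke the sufficiency direction of Theorem~\ref{theorem:stokes_poro}: since $B$ satisfies~\eqref{ine:inf-sup} relative to this $C$, the operator $\mathcal{A}_C$ is an isomorphism; as $C$ was arbitrary in the class permitted by (A3), the claim follows. There is no real obstacle here beyond correctly pairing the two implications of Theorem~\ref{theorem:stokes_poro}; the only point worth emphasizing is that a single inf-sup constant $\gamma_B$ — inherited from $\mathcal{A}_0$ — works for \emph{all} admissible $C$, which means the quantitative stability bounds produced inside the proof of Theorem~\ref{theorem:stokes_poro} are uniform in $C$ as well.
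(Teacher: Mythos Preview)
Your argument is correct and matches the paper's proof essentially verbatim: apply the necessity direction of Theorem~\ref{theorem:stokes_poro} with $C=0$ to obtain the inf-sup condition, observe that subtracting the nonnegative term $\|q\|_C$ only weakens it, and then apply the sufficiency direction for general $C$. Your additional remark that the same $\gamma_B$ works for all admissible $C$, so the stability bounds are uniform in $C$, is a useful observation not spelled out in the paper.
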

\begin{proof} From the fact that $\mathcal{A}_0$ is isomorphism it
  follows that~\eqref{ine:inf-sup} holds with $C=0$, and hence, also
  with any symmetric positive, (semi-)definite and bounded $C$. This in
  turn (by Theorem~\ref{theorem:stokes_poro}) implies that
  $\mathcal{A}_C$ is an isomorphism.
\end{proof}

The next corollary allows us to add consistent perturbations to
already stable discretizations in order to improve the monotonicity properties
of the underlying discretizations.
\begin{corollary}\label{corollary2}
  Suppose that $\mathcal{A}_C$ is an isomorphism, that (A1)-(A3) hold,
  and that $D$ is spectrally equivalent to $C$, namely
  $\alpha_0\|q\|_C\le \|q\|_D \le \alpha_1\|q\|_C$ for some
  positive constants $\alpha_0$ and $\alpha_1$.  Then
  $\mathcal{A}_{D}$ is an isomorphism.
\end{corollary}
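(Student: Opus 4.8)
The plan is to reduce the claim about $\mathcal{A}_D$ directly to Theorem~\ref{theorem:stokes_poro} by verifying the inf-sup condition~\eqref{ine:inf-sup} with $C$ replaced by $D$. Since $D$ is symmetric, positive (semi-)definite, and bounded (being spectrally equivalent to $C$, which satisfies (A3)), the hypotheses (A1)--(A3) hold for the triple $(A,B,D)$ as well, so it suffices to check the modified inf-sup inequality
\begin{equation*}
\sup_{v\in V} \frac{\langle Bv, q\rangle}{\|v\|_A} \ge \gamma_D \|q\| - \|q\|_D
\end{equation*}
for some $\gamma_D>0$ and all $q\in Q$.

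First I would use the hypothesis that $\mathcal{A}_C$ is an isomorphism: by Theorem~\ref{theorem:stokes_poro} (the "only if" direction), this gives the inf-sup condition~\eqref{ine:inf-sup} with some constant $\gamma_B>0$, i.e. $\sup_{v\in V} \langle Bv,q\rangle/\|v\|_A \ge \gamma_B\|q\| - \|q\|_C$ for all $q$. Next I would convert the $\|q\|_C$ term into a $\|q\|_D$ term using spectral equivalence: from $\alpha_0\|q\|_C \le \|q\|_D$ we get $\|q\|_C \le \alpha_0^{-1}\|q\|_D$, hence
\begin{equation*}
\sup_{v\in V} \frac{\langle Bv, q\rangle}{\|v\|_A} \ge \gamma_B\|q\| - \alpha_0^{-1}\|q\|_D.
\end{equation*}
This is not yet in the desired form because the coefficient in front of $\|q\|_D$ is $\alpha_0^{-1}$ rather than $1$; if $\alpha_0 \ge 1$ we are already done, but in general $\alpha_0^{-1}$ may exceed $1$.

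To handle that, I would split into two cases according to the size of $\|q\|_D$ relative to $\|q\|$ (a standard trick for such "inf-sup up to a perturbation" inequalities). Fix a threshold, say compare $2\alpha_0^{-1}\|q\|_D$ with $\gamma_B\|q\|$. In the case $\|q\|_D \le \tfrac{\gamma_B\alpha_0}{2}\|q\|$, the bound above gives $\sup_v \langle Bv,q\rangle/\|v\|_A \ge \tfrac{\gamma_B}{2}\|q\| \ge \tfrac{\gamma_B}{2}\|q\| - \|q\|_D$. In the complementary case $\|q\|_D > \tfrac{\gamma_B\alpha_0}{2}\|q\|$, we have $\|q\| < \tfrac{2}{\gamma_B\alpha_0}\|q\|_D$, so $\tfrac{\gamma_B}{2}\|q\| - \|q\|_D$ is already negative (indeed $\tfrac{\gamma_B}{2}\|q\| \le \tfrac{1}{\alpha_0}\|q\|_D \le \|q\|_D$ when $\alpha_0\ge 1$, and more generally one just needs the right-hand side nonpositive), while the left-hand side is $\ge 0$, so the inequality holds trivially. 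Thus in both cases~\eqref{ine:inf-sup} holds for $D$ with $\gamma_D = \gamma_B/2$, and Theorem~\ref{theorem:stokes_poro} yields that $\mathcal{A}_D$ is an isomorphism.

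The only mild subtlety — the "hard part," though it is really just bookkeeping — is making sure the case distinction is set up so that the perturbation coefficient is exactly $1$ as required by the statement of the theorem, rather than an arbitrary constant; this is why one cannot simply absorb $\alpha_0^{-1}$ into the constant and instead must trade a worse multiplicative constant $\gamma_D$ for the correct unit coefficient on $\|q\|_D$. One could alternatively note that the upper bound $\|q\|_D \le \alpha_1\|q\|_C$ is not even needed for this direction (only $\alpha_0$ enters), but it is harmless to keep the symmetric hypothesis; $\alpha_1$ together with (A3) is what guarantees $D$ is bounded so that (A1)--(A3) are genuinely satisfied for the triple $(A,B,D)$.
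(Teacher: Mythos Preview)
Your overall strategy matches the paper's: use Theorem~\ref{theorem:stokes_poro} in the ``only if'' direction to extract the inf-sup inequality~\eqref{ine:inf-sup} for $C$, convert it to one for $D$ via spectral equivalence, and apply the ``if'' direction. However, your case analysis contains a genuine gap. You set up the split assuming $\alpha_0<1$ (since for $\alpha_0\ge 1$ you already observed the result is immediate), and in Case~2 you assert that $\tfrac{\gamma_B}{2}\|q\|-\|q\|_D\le 0$. But the hypothesis of Case~2 only gives $\|q\|_D>\tfrac{\gamma_B\alpha_0}{2}\|q\|$, which for $\alpha_0<1$ is \emph{strictly weaker} than $\|q\|_D\ge\tfrac{\gamma_B}{2}\|q\|$; your parenthetical ``indeed $\tfrac{\gamma_B}{2}\|q\|\le\tfrac{1}{\alpha_0}\|q\|_D\le\|q\|_D$ when $\alpha_0\ge1$'' is precisely the wrong regime, and ``more generally one just needs the right-hand side nonpositive'' restates the goal without proving it. So the claimed constant $\gamma_D=\gamma_B/2$ does not work.

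The fix is tiny: take $\gamma_D=\tfrac{\alpha_0\gamma_B}{2}$ instead. Then in Case~2 the inequality $\|q\|_D>\tfrac{\gamma_B\alpha_0}{2}\|q\|$ is exactly $\gamma_D\|q\|-\|q\|_D<0$, and Case~1 is unchanged since $\tfrac{\alpha_0\gamma_B}{2}\le\tfrac{\gamma_B}{2}$. The paper avoids the case split entirely with a one-line argument: since the supremum $S(q):=\sup_v\langle Bv,q\rangle/\|v\|_A$ is nonnegative, one has
\[
\|q\|_D+S(q)\ \ge\ \min\{1,\alpha_0\}\bigl(\|q\|_C+S(q)\bigr)\ \ge\ \min\{1,\alpha_0\}\,\gamma_B\,\|q\|,
\]
which is~\eqref{ine:inf-sup} for $D$ with $\gamma_D=\min\{1,\alpha_0\}\gamma_B$.
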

\begin{proof}
For all $q\in Q$, we have
\[
\|q\|_D + \sup_{v\in V}\frac{\langle Bv ,  q\rangle}{\|v\|_A}
\geq \min\{1,\alpha_0\}\left(\|q\|_C + \sup_{v\in V}\frac{\langle Bv,
    q\rangle}{\|v\|_A}\right)\ge \min\{1,\alpha_0\}\gamma_B\|q\|,
\]
which shows~\eqref{ine:inf-sup} for $\mathcal{A}_D$. Applying
Theorem~\ref{theorem:stokes_poro} gives the desired result.
\end{proof}

\subsection{Application to discretizations of Biot's model}
After a time discretization (backward Euler scheme in time) of the Biot's model, the following system of
differential equations is solved on every time step on a domain
$\Omega\subset \mathbb{R}^d$:
\begin{eqnarray}
&& -\ddiv\bmz{\sigma} + \nabla p = \bmz{f},\qquad
\bmz{\sigma} = 2\mu \varepsilon(\bmz{u})  + \lambda\ddiv(\bmz{u}) I \label{eq:u0}\\
&& -\ddiv \bmz{u} + \tau \ddiv K \nabla p = g.\label{eq:p0}
\end{eqnarray}
A typical set of boundary conditions is
\begin{eqnarray*}
&&\bmz{u} = \bmz{0}, \quad \mbox{and}\quad (K\nabla p\cdot\bmz{n}) =0,
\quad\mbox{on}\quad \Gamma_c,\\
&&\bmz{\sigma}\cdot\bmz{n} = \bmz{\beta}, \quad \mbox{and}\quad p=0\quad\mbox{on}\quad \Gamma_t.
\end{eqnarray*}

To introduce the spatial discretization of the Biot's model, we consider
finite dimensional spaces $V_h\subset [H_{\Gamma_c}^1(\Omega)]^d$ and
$Q_h\subset H_{\Gamma_t}^1(\Omega)$ where $H_{\Gamma_c}^1(\Omega)$ and
$H_{\Gamma_t}^1(\Omega)$ are the standard Sobolev spaces with
functions whose traces vanish on $\Gamma_c$ and $\Gamma_t$
respectively.

We have the following discrete formulation (on each time step)
corresponding to~\eqref{eq:u0}--\eqref{eq:p0}. Find
$(\bmz u,p)\in \bmz V_h\times Q_h$ such that
\begin{eqnarray}
&& a(\bmz{u},\bmz{v}) - (\ddiv \bmz{v},p) = (f,\bmz{v}),
\quad\mbox{for all}\quad
\bmz{v}\in \bmz V_h, \label{eq:u}\\
&& -(\ddiv \bmz{u},q) -
\tau a_p( p, q) = (g,q),
\quad\mbox{for all}\quad
 q\in Q_h. \label{eq:p}
\end{eqnarray}
The bilinear form $a(\cdot,\cdot)$ is as follows:
\begin{eqnarray*}
&&
a(\bmz{u},\bmz{v}) =
2\mu\int_{\Omega}\varepsilon(\bmz{u}):\varepsilon(\bmz{v}) +
\lambda\int_{\Omega} \ddiv\bmz{u}\ddiv\bmz{v},\quad
a_p(p,q) = \int_{\Omega} K\nabla p\cdot \nabla q.
\end{eqnarray*}
The corresponding operators $A: V_h\mapsto V_h'$,
$B: Q_h\mapsto V_h'$, and the norm on $Q$, $\|\cdot\|$, are defined as
follows:
\begin{eqnarray*}
&& \langle A u,v \rangle := a(u,v), \quad
\langle B u,q \rangle := -(\ddiv u ,q),\quad
\langle A_p p,q \rangle := a_p(p ,q),\\
&&\|q\|^2  := \tau \langle A_pq,q\rangle  + \|q\|_{L^2(\Omega)}^2.
\end{eqnarray*}
Since $C$ may take different form for different discretizations, we do not
specify its definition here.

\subsubsection{Discretization with MINI element}
We consider a discretization with MINI element, introduced in~\cite{1984ArnoldD_BrezziF_FortinM-aa} where the finite element
spaces that we use are as follows:
\[
\bmz V_h\times Q_h, \quad \mbox{where}\quad  \bmz V_h = \bmz{V}_l\oplus \bmz{V}_b,
\]
where $\bmz V_l$ is the space of piece-wise (with respect to a
triangulation $\mathcal{T}_h$) linear continuous vector
valued functions on $\Omega$ and $\bmz V_b$ is the space of bubble
functions, defined as
\[
\bmz V_b = \operatorname{span}\{\varphi_{b,T}\bmz e_1,
\ldots, \varphi_{b,T}\bmz e_d\}_{T\in \mathcal{T}_h}, \quad
\varphi_{b,T} = \alpha_T\lambda_{1,T}\ldots\lambda_{d+1,T},
\]
where $\lambda_{m,T}$ are the barycentric coordinates on $T$, $\bmz
e_j$ are the canonical Euclidean basis vectors in $\mathbb{R}^d$ and
$\alpha_T$ is a normalizing constant for $\varphi_{b,T}$.  The
function $\varphi_{b,T}$ is scalar valued and is called a bubble
function. The space $Q_h$ consists of piece-wise linear continuous
scalar valued functions.

Note that if we write $\bmz v = \bmz v_l + \bmz v_b$ we have that
\[
a(\bmz{u},\bmz{v}) = a(\bmz{u}_l,\bmz{v}_l) +  a(\bmz{u}_b,\bmz{v}_b).
\]
This is so because $\bmz v_b$ is zero on $\partial T$ for
$T\in T_h$ and integration by parts shows that
$a(\bmz v_l,\bmz v_b) = 0$. We then have the following block
form of the discrete problem~\eqref{eq:u}-\eqref{eq:p}:
\begin{equation}\label{eq:op}
\mathcal{A}\;
\begin{pmatrix}
\bmz{u}_{b}\\
\bmz{u}_{l}\\
 p
\end{pmatrix} =
\begin{pmatrix}
\bmz f_b\\
\bmz f_l\\
g
\end{pmatrix}, \quad\mbox{where}\quad
\mathcal{A}=\begin{pmatrix}
A_b & 0 & G_b\\
0 &  A_l & G_l     \\
G_b^T & G_l^T   & -\tau A_p
\end{pmatrix}\;
\end{equation}
The operators $A_b$, $A_l$, $G_b$, $G_l$ and $A_p$ correspond to the
following bilinear forms:
\begin{eqnarray*}
&&
a(\bmz u_b,\bmz v_b)\rightarrow A_b, \quad a(\bmz u_l,\bmz v_l)\rightarrow
A_l, \quad (K\nabla p,\nabla q) \rightarrow A_p\\
&&-(\ddiv \bmz v_b, p) = (\bmz v_b,\nabla p)  \rightarrow  G_b, \quad
-(\ddiv \bmz v_l,  p)  \rightarrow  G_l,\\
&&\bmz u_b, \;
\bmz v_b
\in \bmz{V}_b,\quad
\bmz u_l, \;
\bmz v_l
\in \bmz{V}_l,\quad
p,\;q\in Q_h.
\end{eqnarray*}

It is well known that inf-sup condition holds for the MINI element for
the Stokes problem, and therefore, by Corollary~\ref{corollary1}, we
obtain the following inf-sup condition for MINI element
discretization of poro-elasticity operator: There exists $\gamma_0$
independent of $h$, $\tau$ and $K$, such that for any
$(\bmz v, q)\in \bmz V_h\times Q_h$ we have
\begin{equation}\label{inf-sup-big}
  \sup_{(\bmz w,s)\in \bmz V_{h}\times Q_h} \frac{(\mathcal{A}(\bmz
    v,q),(\bmz w,s))}{\triplenorm{(\bmz w,s)}} \ge
\gamma_0\triplenorm{(\bmz v,q)}.
 \end{equation}
 As it is well-known (see~\cite{1974BrezziF-aa}),
 equation~\eqref{inf-sup-big} is equivalent to the estimate
\begin{equation}\label{estimate_sol}
\triplenorm{(\bmz u,p)} \leq \gamma_0^{-1} \|(\bmz f,g)\|.
\end{equation}

\subsection{Stabilization via elimination of bubbles}
All P1-P1 stabilized discretizations which we consider here, are
derived from the MINI element by eliminating locally the bubble
functions. For details on such stabilizations we refer to the
classical paper by Brezzi and
Pitk\"{a}ranta~\cite{1984BrezziF_PitkarantaJ-aa} (see
also~\cite{1993BaiocchiC_BrezziF_FrancaL-aa}).

We now consider the following operator on $\bmz V_l \times Q_h$:
\[
\mathcal{A}_l=\begin{pmatrix}
A_l & \;\;G_l     \\
G_l^T   &\;\; -(\tau A_p +S_b)
\end{pmatrix},
\quad \mbox{where}\quad
S_b = G_b^TA_{b}^{-1} G_b,
\]
which is obtained after eliminating the equation corresponding to
bubble functions from~\eqref{eq:op}.  This is also an operator of the
form given in~\eqref{AC} with $C=\tau A_p + S_b$.
We have the following theorem:
\begin{theorem}
Suppose that the triple $(\bmz u_b, \bmz u_l, p)$ solves
\begin{equation}\label{eq:big}
\mathcal{A}\;
\begin{pmatrix}
\bmz{u}_{b}\\
\bmz{u}_{l}\\
 p
\end{pmatrix} =
\begin{pmatrix}
\bmz 0\\
\bmz f_l\\
g
\end{pmatrix}.
\end{equation}
Then the pair $(\bmz u_l, p)$ solves
\begin{equation}\label{eq:small}
\mathcal{A}_l\;
\begin{pmatrix}
\bmz{u}_{l}\\
 p
\end{pmatrix} =
\begin{pmatrix}
\bmz f_l\\
g
\end{pmatrix}.
\end{equation}
Moreover, a uniform inf-sup condition such as~\eqref{inf-sup-big}
holds: For any $(\bmz v_l,q) \in \bmz V_l\times~Q_h$,
\begin{equation}\label{inf-sup-small}
\sup_{(\bmz w_l,s)\in \bmz V_{l}\times Q_h} \frac{(\mathcal{A}_l
(\bmz v_l,q),(\bmz w_l,s))}{\triplenorm{(\bmz w_l,s)}} \ge \gamma_1\triplenorm{(\bmz v_l,q)}.
\end{equation}
\end{theorem}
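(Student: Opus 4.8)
The plan is to realize $\mathcal{A}_l$ as an instance of the abstract operator~\eqref{AC} with $A:=A_l$, $B:=G_l^T$ and $C:=\tau A_p+S_b$, and then to transfer to it the inf-sup condition~\eqref{ine:inf-sup} that is already available for the MINI element. The identity~\eqref{eq:small} is just the Schur elimination of the bubble unknowns: the elastic form is coercive on $\bmz V_b$, so $A_b$ is symmetric positive definite, and the first block row of~\eqref{eq:big} gives $\bmz u_b=-A_b^{-1}G_b p$; substituting this into the third block row turns $G_b^T\bmz u_b+G_l^T\bmz u_l-\tau A_p p=g$ into $G_l^T\bmz u_l-(\tau A_p+S_b)p=g$, which together with the unchanged second block row $A_l\bmz u_l+G_l p=\bmz f_l$ is exactly~\eqref{eq:small}. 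In particular $C=\tau A_p+S_b$ is bounded, self-adjoint and positive semidefinite, so (A1)--(A3) hold for $\mathcal{A}_l$ and Theorem~\ref{theorem:stokes_poro} applies once~\eqref{ine:inf-sup} is verified.

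Write $M_l(q):=\sup_{\bmz v_l\in\bmz V_l}\frac{-(\ddiv\bmz v_l,q)}{\|\bmz v_l\|_{A_l}}$ for the quantity appearing on the left of~\eqref{ine:inf-sup} for $\mathcal{A}_l$. The key estimate I would establish is $\sup_{\bmz v\in\bmz V_h}\frac{-(\ddiv\bmz v,q)}{\|\bmz v\|_A}\le M_l(q)+\|q\|_{S_b}$. Indeed, decomposing $\bmz v=\bmz v_b+\bmz v_l$ and using $a(\bmz v_b,\bmz v_l)=0$, one has $\|\bmz v\|_A^2=\|\bmz v_b\|_{A_b}^2+\|\bmz v_l\|_{A_l}^2$ and $-(\ddiv\bmz v,q)=-(\ddiv\bmz v_b,q)-(\ddiv\bmz v_l,q)$; the duality identity $\sup_{\bmz v_b\in\bmz V_b}\frac{-(\ddiv\bmz v_b,q)}{\|\bmz v_b\|_{A_b}}=\|G_bq\|_{A_b^{-1}}=\sqrt{\langle S_bq,q\rangle}=\|q\|_{S_b}$, together with Cauchy--Schwarz, then gives $-(\ddiv\bmz v,q)\le\|q\|_{S_b}\|\bmz v_b\|_{A_b}+M_l(q)\|\bmz v_l\|_{A_l}\le\sqrt{\|q\|_{S_b}^2+M_l(q)^2}\,\|\bmz v\|_A\le(M_l(q)+\|q\|_{S_b})\,\|\bmz v\|_A$.

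Next I would bring in the MINI inf-sup. Since~\eqref{inf-sup-big} says the MINI operator (with $\bmz u_b$ and $\bmz u_l$ grouped into a single unknown, so that it too has the form~\eqref{AC}, now with $C=\tau A_p$) is an isomorphism, Theorem~\ref{theorem:stokes_poro} gives $\sup_{\bmz v\in\bmz V_h}\frac{-(\ddiv\bmz v,q)}{\|\bmz v\|_A}\ge\gamma_B\|q\|-\|q\|_{\tau A_p}$ for a constant $\gamma_B$ depending only on $\gamma_0$. Combined with the key estimate, $M_l(q)+\|q\|_{S_b}\ge\gamma_B\|q\|-\|q\|_{\tau A_p}$; since $\|q\|_{\tau A_p}+\|q\|_{S_b}\le\sqrt2\,\|q\|_C$ (because $\|q\|_C^2=\|q\|_{\tau A_p}^2+\|q\|_{S_b}^2$), this yields $M_l(q)\ge\gamma_B\|q\|-\sqrt2\,\|q\|_C=\gamma_B\|q\|-\|q\|_{2C}$ for every $q\in Q_h$. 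This is exactly condition~\eqref{ine:inf-sup} for the operator $\mathcal{A}_{2C}$, so Theorem~\ref{theorem:stokes_poro} shows $\mathcal{A}_{2C}$ is an isomorphism; as $C$ and $2C$ are spectrally equivalent, Corollary~\ref{corollary2} then gives that $\mathcal{A}_l=\mathcal{A}_C$ is an isomorphism, which is equivalent to~\eqref{inf-sup-small} with $\gamma_1$ depending only on $\gamma_0$ and hence independent of $h$, $\tau$ and $K$.

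The step I expect to be the main obstacle is this final passage. The natural estimate comes out as $M_l(q)\ge\gamma_B\|q\|-\sqrt2\,\|q\|_C$, with the factor $\sqrt2$ rather than the factor $1$ in front of $\|q\|_C$ that~\eqref{ine:inf-sup} demands verbatim; since typically $\gamma_B<1$, one cannot simply rerun the proof of Theorem~\ref{theorem:stokes_poro} with this weaker inequality (the choice of $\theta$ there becomes infeasible). Replacing $C$ by $2C$ and then invoking the spectral-equivalence Corollary~\ref{corollary2} is the clean way around this. Beyond that, the remaining work is bookkeeping: checking that the duality identity for $\|q\|_{S_b}$, the Cauchy--Schwarz step and $\gamma_B$ all produce constants independent of $h$, $\tau$ and $K$, which is what makes the resulting $\gamma_1$ uniform.
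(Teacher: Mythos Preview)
Your argument is correct, including the nice workaround for the $\sqrt2$ factor via $\mathcal{A}_{2C}$ and Corollary~\ref{corollary2}. However, the paper takes a much shorter and more direct route that avoids going back through the abstract inf--sup condition altogether. It observes that for any right-hand side $(\bmz f_l,g)$, extending it to $(\bmz 0,\bmz f_l,g)$ and solving the full MINI system~\eqref{eq:big} yields a triple $(\bmz u_b,\bmz u_l,p)$ with $\triplenorm{(\bmz u_b,\bmz u_l,p)}\le\gamma_0^{-1}\|(\bmz f_l,g)\|$ by~\eqref{estimate_sol}; the first part of the theorem says $(\bmz u_l,p)$ solves~\eqref{eq:small}, and trivially $\triplenorm{(\bmz u_l,p)}\le\triplenorm{(\bmz u_b,\bmz u_l,p)}$ (in fact with equality, since $\|\bmz u_b\|_{A_b}^2=\|p\|_{S_b}^2$ when $\bmz u_b=-A_b^{-1}G_bp$). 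This immediately gives the bound $\triplenorm{(\bmz u_l,p)}\le\gamma_0^{-1}\|(\bmz f_l,g)\|$, i.e.\ $\mathcal{A}_l^{-1}$ is bounded, which is equivalent to~\eqref{inf-sup-small}. Your approach has the merit of showing explicitly how the abstract inf--sup condition~\eqref{ine:inf-sup} transfers from $\bmz V_h$ to $\bmz V_l$ via the bubble Schur complement, and it keeps track of constants in a way that makes the dependence on $\gamma_B$ transparent; the paper's approach trades this for brevity by working at the level of the solution estimate rather than the inf--sup condition, exploiting that Schur elimination produces the reduced solution as a sub-block of the full one.
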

\begin{proof}
Since $(\bmz u_b,\bmz u_l, p)$ solves the system~\eqref{eq:big} we have
that
\begin{eqnarray*}
&& \bmz u_b = -A_b^{-1}G_b p\\
&& A_l \bmz u_l + G_l p = \bmz f_l. \\
&& G_l^T\bmz u_l + G_b^T \bmz u_b - \tau A_p p = g \quad \Longrightarrow
G_l^T\bmz u_l -(G_b^T A_b^{-1} G_b +  \tau A_p )p = g.
\end{eqnarray*}
From this we conclude that $(\bmz u_l, p)$ solves~\eqref{eq:small}.
Now, since $(\bmz u_b,\bmz u_l, p)$ solves~\eqref{eq:big}, from~\eqref{estimate_sol},
\[
\triplenorm{(\bmz u_b, \bmz u_l,p)} \le \gamma_0^{-1}\|(\bmz 0, \bmz f_l,g)\|,
\]
and therefore we have
\[
\triplenorm{(\bmz u_l,p)} \le \triplenorm{(\bmz u_b, \bmz u_l,p)} \le
\gamma_0^{-1}\|(\bmz 0, \bmz f_l,g)\|=\gamma_0^{-1}\|(\bmz f_l,g)\|.
\]
This estimate shows that $\mathcal{A}_l$ is a bounded
isomorphism, which is equivalent to the inf-sup condition~\eqref{inf-sup-small}.
This completes the proof.
\end{proof}

Applying Corollary~\ref{corollary2} to $\mathcal{A}_l$ then shows that
any operator $C: Q_h\mapsto Q_h'$, spectrally equivalent to $\tau A_p
+ S_b$ will
result in a stable discretization of the Biot's model. As we show in
the next section (Theorem~\ref{thm:schur}), the perturbations
spectrally equivalent to $S_b$ are of the form
\[
\langle C p,q\rangle=\sum_{T\in \mathcal{T}_h} C_Th_T^{2}\int_T(\nabla
p\cdot \nabla q),
\]
where $C_T$, $T\in \mathcal{T}_h$ are constants independent of the
mesh size $h$ or $\tau$.

\subsection{Perturbations, spectrally equivalent to the Schur
  complement}
In this section we compute the Schur complement (the perturbation or the
stabilization) given by $S_b=G_b^TA_b^{-1}G_b$.  We denote
$\bmz V_{b,T}=\operatorname{span}{\varphi_{b,T}}$ and we have that
$\bmz V_b=\oplus_{T\in \mathcal{T}_h} \bmz V_{b,T}$. Let $n_V$ be the
number of vertices in the triangulation, $n_T$ be the number of
elements, and $n_b=d\,n_T$. Note that $n_b$ equals the dimension of
$\bmz V_b$.  With every element $T\in \mathcal{T}_h$ we associate the
incidence matrices $I_{T}\in \mathbb{R}^{n_V\times(d+1)}$ and
$J_T\in \mathbb{R}^{n_b\times d}$ mapping the local degrees of freedom
on $T$  to the degrees of freedom corresponding to $\bmz Q$
and $\bmz V_b$.

Let us now give a more precise definition of the incidence matrices
$I_T$ and $J_T$ for an element $T\in \mathcal{T}_h$, with vertices
$(j_1,\ldots, j_{d+1})$, $j_k\in \{1,\ldots, n_V\}$, and
$j_\ell\neq j_m$, for $j\neq m$.  Let
$\{\bmz{\delta}_1,\ldots,\bmz{\delta}_{d+1}\}$,
$\{\bmz e_1,\ldots,\bmz e_{n_V}\}$,
$\{\bmz f_1,\ldots,\bmz f_{n_b}\}$ and $\{\bmz \eta_1,\ldots,\bmz
\eta_d\}$  be the canonical Euclidean bases in
$\mathbb{R}^{d+1}$, $\mathbb{R}^{n_V}$, $\mathbb{R}^{n_b}$ and
$\mathbb{R}^d$, respectively.
We also denote by $(k_1,\ldots,k_d)$
the degrees of freedom corresponding to the bubble functions
associated with $T\in \mathcal{T}_h$.
We   then define
\begin{equation}
\mathbb{R}^{n_V\times (d+1)}\ni I_T = \sum_{m=1}^{d+1}\bmz e_{j_m}\bmz \delta_m^T, \quad
\mathbb{R}^{n_b\times d}\ni J_T = \sum_{m=1}^{d}\bmz f_{k_m}\bmz \eta_m^T.
\end{equation}
Since the sets of degrees of freedom corresponding to the bubble
functions in different elements do not intersect, we have
$J_{T}^T J_T = I_{d\times d}$, and, $J_{T^\prime}^T J_T = 0$ when
$T^\prime\neq T$. Here $I_{d\times d}\in \mathbb{R}^{d\times d}$ is the
identity matrix.  Using these definitions, we easily find
that
\begin{eqnarray*}
&& A_b = \sum_{T\in \mathcal{T}_h} J_T A_{b,T} J_T^T, \quad
A_b^{-1} = \sum_{T\in \mathcal{T}_h} J_T A^{-1}_{b,T} J_T^T,\\
&&G_b = \sum_{T\in \mathcal{T}_h} J_T G_{b,T} I_T^T.
\end{eqnarray*}
These identities then give,
\begin{equation}\label{eq:local-global}
S_b = G_b^T A_b^{-1} G_b,
\quad\mbox{and hence}\quad
S_b = \sum_{T\in \mathcal{T}_h} I_T G_{b,T}^T A_{b,T}^{-1} G_{b,T} I_T^T.
\end{equation}
We next state a spectral equivalence result which shows that $S_b$
introduces a stabilization term of certain order in $h$ for P1-P1
discretization.
Such stabilization techniques have
been discussed by~Verf\"{u}rth in~\cite{1984VerfurthR-aa} (see
also \S~8.5.2 and \S~8.13.2
in~\cite{2013BoffiD_BrezziF_FortinM-aa}).
\begin{theorem} \label{thm:schur} Let $L$ be the stiffness
  matrix corresponding to the Laplace operator discretized with
  piece-wise linear continuous finite elements. Then the following
  spectral equivalence result holds
\begin{equation}
S_{b}  \eqsim  h^2 L,
\end{equation}
where the constants hidden in ``$\eqsim$" are independent  of the mesh size.
\end{theorem}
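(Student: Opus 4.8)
The statement $S_b \eqsim h^2 L$ is local, so the plan is to work element by element using the decomposition~\eqref{eq:local-global},
\[
S_b = \sum_{T\in\mathcal{T}_h} I_T\, G_{b,T}^T A_{b,T}^{-1} G_{b,T}\, I_T^T,
\]
and to compute the local Schur complement $S_{b,T} := G_{b,T}^T A_{b,T}^{-1} G_{b,T}$ explicitly. Here $A_{b,T}$ is a $d\times d$ block (the local stiffness of the single vector bubble $\varphi_{b,T}\bmz e_j$), and $G_{b,T}$ is the $d\times(d+1)$ matrix with entries $\int_T \varphi_{b,T}\,\partial_k\lambda_m$. First I would observe that $A_{b,T} = c_T |T|\, I_{d\times d}$ for a dimensionless constant $c_T$ coming from $\int_T|\nabla\varphi_{b,T}|^2$ (scaled appropriately by the normalization $\alpha_T$), since $2\mu\varepsilon(\varphi_{b,T}\bmz e_j)\!:\!\varepsilon(\varphi_{b,T}\bmz e_k) + \lambda\,\partial_j\varphi_{b,T}\partial_k\varphi_{b,T}$ integrates to something proportional to $\delta_{jk}$ by symmetry of the bubble under the relevant reflections — actually the cleanest route is to note $\int_T\partial_j\varphi_{b,T}\partial_k\varphi_{b,T} = 0$ for $j\neq k$ because $\varphi_{b,T}^2$ vanishes on $\partial T$ and integration by parts gives $-\int_T\varphi_{b,T}\partial_j\partial_k\varphi_{b,T}$, which one checks is zero off-diagonal. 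The key point is that $A_{b,T}$ is a scalar multiple of the identity with the scalar behaving like $|T|$ (up to $h$-independent constants depending only on $\mu,\lambda$ and shape regularity).

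Next I would compute $G_{b,T}$. Since $\int_T\varphi_{b,T}\,dx = \beta_T|T|$ for a dimensionless constant $\beta_T$, and $\partial_k\lambda_m$ is constant on $T$, we get $(G_{b,T})_{k,m} = \beta_T|T|\,\partial_k\lambda_m$ — i.e.\ $G_{b,T} = \beta_T|T|\, N_T$ where $N_T\in\mathbb{R}^{d\times(d+1)}$ has rows equal to the constant gradients $\nabla\lambda_m$. Therefore
\[
S_{b,T} = \frac{\beta_T^2|T|^2}{c_T|T|}\, N_T^T N_T = \frac{\beta_T^2}{c_T}\,|T|\, N_T^T N_T .
\]
Now I recognize $|T|\,N_T^T N_T$ as precisely (a constant multiple of) the local P1 stiffness matrix $L_T$ for the Laplacian, since $(L_T)_{m,m'} = \int_T \nabla\lambda_m\cdot\nabla\lambda_{m'} = |T|\,\nabla\lambda_m\cdot\nabla\lambda_{m'}$. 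Hence $S_{b,T} = \theta_T\, L_T$ with $\theta_T = \beta_T^2/c_T$ a constant depending only on $d$, on $\mu,\lambda$, and on the shape of $T$ through the normalization — but \emph{not} on $h_T$ directly. The factor $h^2$ then appears because we want to compare with $h_T^2 L_T$: by shape regularity $\theta_T \eqsim h_T^2 \cdot(\text{bounded above and below})$? — no: I need to be careful. The quantities $\beta_T$ and $c_T$ after the normalization $\alpha_T$ scale so that $\theta_T$ itself is $\eqsim h_T^2$. Concretely, $\int_T\varphi_{b,T}\sim |T|$ and $\int_T|\nabla\varphi_{b,T}|^2 \sim |T|/h_T^2$ (one gradient costs $h_T^{-1}$), so $\theta_T = (\int_T\varphi_{b,T})^2/(|T|\int_T|\nabla\varphi_{b,T}|^2) \sim |T|^2/(|T|\cdot|T|/h_T^2) = h_T^2$. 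Summing over $T$ and using $h_T\eqsim h$ (quasi-uniformity) gives $S_b = \sum_T I_T S_{b,T}I_T^T \eqsim h^2\sum_T I_T L_T I_T^T = h^2 L$.

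**Main obstacle.** The routine part is the explicit shape-dependent computation of $A_{b,T}$ and $G_{b,T}$; the real point to get right is the bookkeeping of the normalization constant $\alpha_T$ and verifying that the resulting $\theta_T$ is squeezed between $c_1 h^2$ and $c_2 h^2$ with $c_1,c_2$ independent of $h$ — this is where quasi-uniformity (or at least shape-regularity together with $h_T\eqsim h$) enters. A secondary subtlety is handling the $\lambda$-dependent term $\lambda\int_T(\ddiv(\varphi_{b,T}\bmz e_j))(\ddiv(\varphi_{b,T}\bmz e_k)) = \lambda\int_T\partial_j\varphi_{b,T}\partial_k\varphi_{b,T}$ inside $A_{b,T}$: one must confirm it does not destroy the "scalar multiple of identity" structure, which follows from the same integration-by-parts/symmetry argument as above, and that the constants stay uniform in $\lambda$ only if one is content with constants depending on $\lambda$ — which is acceptable here since Theorem~\ref{thm:schur} only claims $h$-independence. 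Finally, assembling the local equivalences into a global one is immediate because both $S_b$ and $h^2L$ are assembled by the same $I_T$ and a sum of elementwise spectrally-equivalent SPSD matrices (with uniform constants) is spectrally equivalent to the corresponding sum.
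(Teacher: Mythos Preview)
Your overall strategy---compute the local Schur complement $S_{b,T}$, recognize it as a scaling of the local P1 Laplacian stiffness $L_T$, and then track the $h$-scaling of the coefficient via shape regularity---is sound and is exactly how the paper proceeds (the paper's proof consists of invoking Lemma~\ref{lemma:schur-local} together with the assembly identity~\eqref{eq:local-global}). However, there is a concrete error in your computation of $A_{b,T}$.

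You claim that $A_{b,T}$ is a scalar multiple of the identity, arguing that $\int_T\partial_j\varphi_{b,T}\,\partial_k\varphi_{b,T}=0$ for $j\neq k$ because integration by parts gives $-\int_T\varphi_{b,T}\,\partial_j\partial_k\varphi_{b,T}$, ``which one checks is zero off-diagonal.'' This check fails: on a generic simplex there is no symmetry forcing the mixed second partials of the bubble to integrate to zero against $\varphi_{b,T}$. Already on the reference triangle with $\varphi=xy(1-x-y)$ one computes $\int_T\partial_x\varphi\,\partial_y\varphi=1/360\neq 0$. In fact the paper shows (Lemma~\ref{lemma:matrices}) that
\[
A_{b,T}=\alpha^2\eta_d\bigl(\mu\,\operatorname{tr}(L_T)\,I+(\lambda+\mu)\,\Lambda\Lambda^T\bigr),
\]
which is diagonal only when $\Lambda\Lambda^T$ is, i.e.\ essentially never for a general element.

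This error is not fatal for your purposes: you do not need $A_{b,T}$ to \emph{equal} a multiple of the identity, only to be \emph{spectrally equivalent} to one, and that is immediate from the formula above (the eigenvalues of $\Lambda\Lambda^T$ are the nonzero eigenvalues of $L_T$, all comparable to $\operatorname{tr}(L_T)\eqsim h_T^{d-2}$ by shape regularity). With $A_{b,T}\eqsim h_T^{d-2}I$ your scaling computation $\theta_T=(\int_T\varphi_{b,T})^2\big/\bigl(|T|\int_T|\nabla\varphi_{b,T}|^2\bigr)\eqsim h_T^2$ goes through unchanged and yields $S_{b,T}\eqsim h_T^2 L_T$.

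The paper's route is slightly different in execution: rather than arguing by spectral equivalence of $A_{b,T}$ with a scalar, it keeps the exact expression for $A_{b,T}$, applies the Sherman--Morrison--Woodbury formula to invert $I+\beta\Lambda\Lambda^T$, and obtains the closed form $S_{b,T}=\sigma L_T(I+\beta L_T)^{-1}$ before reading off the equivalence from the eigenvalues of $L_T$. This buys an explicit formula for $S_{b,T}$ (and in particular the exact constant in one dimension), whereas your approach, once repaired, is shorter but gives only the equivalence with unspecified shape-dependent constants.
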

\begin{proof}The spectral equivalence is a direct consequence from
Lemma~\ref{lemma:schur-local} and the relations given in~\eqref{eq:local-global}.
\end{proof}

\begin{remark}
  The spectral equivalence in Theorem~\ref{thm:schur} and the analysis
  that follows justifies the addition of stabilization terms to both
  the MINI element and the stabilized P1-P1 discretizations. The
  results in~\ref{sect:local-Schur} also hold for one, two and three
  spatial dimensions and also give the exact perturbation
  (stabilization) to P1-P1 elements that provides inf-sup condition
  with the same constant as the MINI element.

  Related results (in 2D) are found in a paper on Stokes equations by
  Bank and Welfert~\cite{1990BankR_WelfertB-aa} where it was shown
  that in 2D the elimination of the bubbles in the MINI element gives
  the Petrov-Galerkin discretization by Hughes, Franka and
  Balestra~\cite{1986HughesT_FrancaL_BalestraM-aa} and Brezzi and
  Douglas~\cite{1988BrezziF_DouglasJ-aa}. Here we not only compute the
  exact Schur complement in any spatial dimension, but we also show
  that the perturbation is spectrally equivalent to a scaling of the
  discretization of the Laplacian with piece-wise linear finite
  elements. The details are in the appendix.

  Such results, however, do not say anything about the monotonicity of
  the corresponding discretization (except in 1D, where a further
  stabilization can be introduced in order to obtain a monotone
  discrete scheme).  In fact, for the one dimensional case considered
  in detail in Section~\ref{sec:oscillations} the minimum amount of
  stabilization that provides monotone discretization can be
  calculated precisely. In general, even for two and three spatial
  dimensions, adding a stabilization term of the form $c h^2 L$ in
  case when $L$ is a Stieltjes matrix improves the monotonicity
  properties of the resulting discrete problem. This is natural to
  expect because a Stieltjes matrix is monotone. Indeed, the numerical
  results that we present later also show that adding such
  stabilizations leads to monotone schemes. However, no theoretical
  results on the monotonicity of the discrete operators for two and
  three dimensional problems are available in the literature and seem
  to be very hard to establish.
\end{remark}

\section{Error estimates for the fully discrete problem}\label{sec:fully-discrete}

In this section, we consider the error analysis of the finite element
discretization of the Biot's model.  To simplify the notation and
without loss of generality in this section we assume that
the boundary conditions for both the displacement $u$ and the pressure
$p$ are homogeneous Dirichlet boundary conditions.  Then, the weak
form of the Biot's model is as follows: Find
$\bmz u(t) \in \left[ H_0^1(\Omega) \right]^d$ and
$p(t) \in H_0^1(\Omega)$, such that
\begin{eqnarray}
&&a(\bmz u, \bmz v) - (\ddiv \bmz v, p) = (f, \bmz v), \quad \forall  \bmz v \in \bmz [H_0^1(\Omega)]^d ,  \label{eqn:biot-weak-1}  \\
&&- (\ddiv \partial_t \bmz{u}, q) - a_p(p,q) = 0, \quad \forall q \in H_0^1(\Omega) , \label{eqn:biot-weak-2}
\end{eqnarray}
with the initial data $\bmz u(0)$ and $p(0)$ given by the solution of
the following Stokes problem: Find $\bmz u(0) \in \left[ H_0^1(\Omega) \right]^d$
and $p(0) \in L^2(\Omega)$, such that,
\begin{eqnarray}
                                         && a(\bmz u(0), \bmz v) - (\ddiv \bmz v, p(0)) = (f(0), \bmz v), \quad \forall \bmz v \in \bmz [H_0^1(\Omega)]^d ,  \label{eqn:biot-weak-init-1}  \\
                                         && -(\ddiv \bmz u(0), q) = 0,
                                            \quad \forall q \in
                                            L^2(\Omega),
                                            \label{eqn:biot-weak-init-2}
                                       \end{eqnarray}

We consider the fully discretized scheme at time $t_n$, $n=1,2,\ldots$, as the following: Find $\bmz u_h^n=u_h(t_n) \in \bmz V_h \subset \left[ H^1(\Omega) \right]^d$ and $p_h^n=p_h(t_n) \in Q_h \subset H^1(\Omega)$, such that,
\begin{eqnarray}
&& a(\bmz u_h^n, \bmz v_h) - (\ddiv \bmz v_h, p_h^n) = (f(t_n), \bmz v_h), \quad \forall \bmz v_h \in \bmz V_h, \label{eqn:biot-mini-1}  \\
&& -(\ddiv \bar{\partial}_t \bmz u_h^n, q_h) - a_p(p_h^n, q_h) - \varepsilon h^2 (\nabla \bar{\partial}_t p^n_h, \nabla q_h) = 0, \quad \forall q_h \in Q_h, \label{eqn:biot-mini-2}
\end{eqnarray}

where
$\bar{\partial}_t \bmz u_h^n := (\bmz u_h^n - \bmz u_h^{n-1})/\tau$
and $\bar{\partial}_t p_h^n := (p_h^n - p_h^{n-1})/\tau$. Here we try
to analyze MINI element and stabilized P1-P1 element in a unified way,
therefore, the finite element spaces $\bmz V_h$ and $Q_h$ denote both
Stokes pairs. We also define the following norm on the finite element
spaces:
\begin{equation} \| (\bmz{u}, p ) \|_{\tau, h} := \left( \|
    \bmz u \|_a^2 + \tau \| p \|_{a_p}^2 + \varepsilon h^2 \| \nabla p
    \|^2 \right)^{1/2}.
\end{equation}
We further denote, by $\|\cdot\|_k$ and $|\cdot|_k$ the norms and
seminorms in the Sobolev space $H^{k}(\Omega)$, and without loss of
generality, by $\|\cdot\|$ the $L^2(\Omega)$ norm, i.e.
$\|\cdot\|=\|\cdot\|_0$. Below we also denote by $c$ a generic constant
independent of time step, mesh size and other important parameters.

For the initial data $\bmz u_h^0$ and $p_h^0$, we will consider two cases.  First case is that they are given by the following stabilized Stokes equation:
\begin{eqnarray}
&& a(\bmz u_h^0, \bmz v_h) - (\ddiv \bmz v_h, p_h^0) = (f(0), \bmz v_h) \quad \forall \bmz v_h \in \bmz V_h,
\label{eqn:biot-mini-init-1} \\
&& -(\ddiv \bmz u_h^0, q_h) - \varepsilon h^2 (\nabla p_h^0, \nabla q_h)= 0 \quad \forall q_h \in Q_h. \label{eqn:biot-mini-init-2}
\end{eqnarray}
Second case is that they do not satisfy \eqref{eqn:biot-mini-init-1} and \eqref{eqn:biot-mini-init-2} but are defined as following,
\begin{equation} \label{eqn:biot-mini-ini-3}
\ddiv \bmz u_h^0 = 0 \ \text{and} \ p_h^0 = 0.
\end{equation}

To derive error analysis of the fully discretized scheme
\eqref{eqn:biot-mini-1}-\eqref{eqn:biot-mini-2}, we need to define the
following elliptic projections $ \bar{\bmz u}_h$ and $\bar{p}_h$ for
$t>0$ as usual,
\begin{eqnarray}
&& a( \bar{\bmz u}_h, \bmz v_h ) - (\ddiv \bmz v_h,  \bar{p}_h) = a(\bmz u, \bmz v_h) - (\ddiv \bmz v_h, p), \quad \forall \bmz v_h \in \bmz V_h \\
&& a_p(\bar{p}_h, q_h) = a_p(p, q_h), \quad \forall
                    q_h \in Q_h
\end{eqnarray}

To estimate the error, following Thom{\'e}e,~\cite{2006ThomeeV-aa} we
split the discretization error as follows.
\begin{eqnarray}
&&\bmz u(t) - \bmz u_h(t) = (\bmz u(t) - \bar{\bmz u}_h(t) ) - (\bmz u_h(t) - \bar{\bmz u}_h(t)) =: \rho_{\bmz u} - e_{\bmz u}, \label{eqn:err_u_decomp} \\
&& p(t) - p_h(t) = (p(t) - \bar{p}_h(t) ) - (p_h(t) - \bar{p}_h(t)) =: \rho_{p} - e_p. \label{eqn:err_p_decomp}
\end{eqnarray}
For $t=t_n$ we use the short hand notation
$\rho_{\bmz u}^n=\rho_{\bmz  u}(t_n)$, and similarly $e_{\bmz u}^n$,
$\rho_{p}^n$, $e_p^n$ denote the values of $e_u$, $\rho_p$ and $e_p$
at time $t=t_n$, respectively.

For the error of the elliptic projections, because we use MINI element
or P1-P1 element, we have, for all $t$,
\begin{eqnarray}
&& \| \rho_{\bmz u} \|_a \leq c h ( | \bmz u |_2 + | p |_1 ), \label{rhou}\\
&& \| \rho_{p} \|_{1} \leq c h | p |_2, \quad\| \rho_{p} \|_{a_p} \leq
   c h | p |_2\label{rhop-1}\\
&& \| \rho_p \| \leq c h^2 | p |_2 \label{rhop-2}.
\end{eqnarray}
We refer to~\cite{MuradLoula94} for details. Since $\partial_t
\overline{p}=\overline{\partial_t p}$, we have the estimates above
also for  $\partial_t \rho_u$ and $\partial_t \rho_p$, where on the
right side of the inequalities we have norms of $\partial_t u$ and $\partial_t p$
instead of norms of $u$ and $p$ respectively.

The following lemmas estimate the error between the elliptic
projection $ \{ \bar{\bmz u}_h(t_n), \bar{p}_h(t_n) \}$ and the
numerical solutions $\{ \bmz u_h^n, p_h^n \}$.
\begin{lemma}
  \label{lem:mini-e_u-e_p} Let
  $w_{\bmz u}^j := \partial_t \bmz u(t_j) - \frac{\bar{\bmz
      u}_h(t_{j}) - \bar{\bmz u}_h(t_{j-1})}{\tau} $
  and
  $w^j_p := \partial_t p(t_j) - \frac{\bar{p}_h(t_{j}) -
    \bar{p}_h(t_{j-1})}{\tau}$,
  we have
\begin{equation}\label{ine:e_u}
 \| ( e_{\bmz u}^n, e_p^{n}) \|_{\tau,h}  \leq \| (e_{\bmz u}^0, e_p^{0}) \|_{\tau,h}  + c \tau \sum_{j=1}^n \left(  \| w_{\bmz u}^j \|_a   + \varepsilon^{1/2} h \| \nabla w_p^j \| + \varepsilon^{1/2} h \| \nabla \partial_t p(t_j) \|  \right).
\end{equation}	
If the initial data $\bmz u_h^0$ and $p_h^0$ satisfy \eqref{eqn:biot-mini-init-1} and \eqref{eqn:biot-mini-init-2}, we have,
\begin{eqnarray}
\| e_p^n \|_{a_p} & \leq &\| e_p^{0} \|_{a_p} +  c  \tau^{1/2} \left[  \left( \sum_{j=1}^n \| w_{\bmz u}^j \|_a^2\right)^{1/2}  \right. \nonumber\\
&& \quad \left. +  \left( \sum_{j=1}^n \varepsilon h^2  \| \nabla w_p^j \|^2 \right)^{1/2} + \left( \sum_{j=1}^n \varepsilon h^2  \| \nabla \partial_t p(t_j) \|^2 \right)^{1/2} \right],  \label{ine:e_p}
\end{eqnarray}
and if the initial data $\bmz u_h^0$ and $p_h^0$ are defined by \eqref{eqn:biot-mini-ini-3} and do not satisfy \eqref{eqn:biot-mini-init-1} and \eqref{eqn:biot-mini-init-2}, we have,
\begin{eqnarray}
\| e_p^n \|_{a_p} & \leq &\frac{1}{\sqrt{2 \tau}}  \| (e_{\bmz u}^0,  e_p^{0}) \|_{\tau,h}+  c  \tau^{1/2} \left[  \left( \sum_{j=1}^n \| w_{\bmz u}^j \|_a^2\right)^{1/2}  \right. \nonumber\\
& &\quad \left. +  \left( \sum_{j=1}^n \varepsilon h^2  \| \nabla w_p^j \|^2 \right)^{1/2} + \left( \sum_{j=1}^n \varepsilon h^2  \| \nabla \partial_t p(t_j) \|^2 \right)^{1/2} \right].  \label{ine:e_p_1}
\end{eqnarray}
Moreover, we also have the following estimate in the $L^2$-norm,
\begin{equation}\label{ine:e_p_L2}
  \| e_p^{n} \| \leq c \| (e_{\bmz u}^0, e_p^{0}) \|_{\tau,h}  + c \tau \sum_{j=1}^n \left(  \| w_{\bmz u}^j \|_a   + \varepsilon^{1/2} h \| \nabla w_p^j \| + \varepsilon^{1/2} h \| \nabla \partial_t p(t_j) \|  \right).
\end{equation}	
\end{lemma}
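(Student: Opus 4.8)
The plan follows Thom\'{e}e's approach to parabolic problems: first convert the fully discrete scheme \eqref{eqn:biot-mini-1}--\eqref{eqn:biot-mini-2} into error equations for $e_{\bmz u}^n=\bmz u_h^n-\bar{\bmz u}_h(t_n)$ and $e_p^n=p_h^n-\bar p_h(t_n)$, then run a discrete energy argument with a judicious choice of test function, and finally sum in time. Subtracting the weak form \eqref{eqn:biot-weak-1}--\eqref{eqn:biot-weak-2} at $t=t_n$ from \eqref{eqn:biot-mini-1}--\eqref{eqn:biot-mini-2} and using the two defining identities of the elliptic projection $(\bar{\bmz u}_h,\bar p_h)$ to cancel the projection errors in the elliptic parts, one obtains, for all $\bmz v_h\in\bmz V_h$ and $q_h\in Q_h$,
\begin{align*}
a(e_{\bmz u}^n,\bmz v_h)-(\ddiv\bmz v_h,e_p^n) &= 0,\\
-(\ddiv\bar{\partial}_t e_{\bmz u}^n,q_h)-a_p(e_p^n,q_h)-\varepsilon h^2(\nabla\bar{\partial}_t e_p^n,\nabla q_h) &= R^n(q_h),
\end{align*}
where $R^n(q_h):=-(\ddiv w_{\bmz u}^n,q_h)+\varepsilon h^2(\nabla\partial_t p(t_n),\nabla q_h)-\varepsilon h^2(\nabla w_p^n,\nabla q_h)$, the defects $w_{\bmz u}^n$, $w_p^n$ being exactly those of the statement and the middle term being the consistency error produced by the stabilization, which is absent from the continuous model. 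Call these identities (I) and (II).

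For \eqref{ine:e_u} I would test (I) with $\bmz v_h=\bar{\partial}_t e_{\bmz u}^n$ and (II) with $q_h=e_p^n$ and subtract; the mixed term $(\ddiv\bar{\partial}_t e_{\bmz u}^n,e_p^n)$ cancels, leaving $a(e_{\bmz u}^n,\bar{\partial}_t e_{\bmz u}^n)+\|e_p^n\|_{a_p}^2+\varepsilon h^2(\nabla\bar{\partial}_t e_p^n,\nabla e_p^n)=-R^n(e_p^n)$. Applying $2(a-b,a)\ge\|a\|^2-\|b\|^2$ in the $a(\cdot,\cdot)$ and in the $\varepsilon h^2(\nabla\cdot,\nabla\cdot)$ inner products, multiplying by $2\tau$, and bounding the $a_p$-difference by $2\tau\|e_p^n\|_{a_p}^2$, I obtain the telescoping inequality $\|(e_{\bmz u}^n,e_p^n)\|_{\tau,h}^2-\|(e_{\bmz u}^{n-1},e_p^{n-1})\|_{\tau,h}^2\le 2\tau\,|R^n(e_p^n)|$. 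The two stabilization terms in $R^n(e_p^n)$ are bounded at once since $\varepsilon^{1/2}h\|\nabla e_p^n\|\le\|(e_{\bmz u}^n,e_p^n)\|_{\tau,h}$; for $(\ddiv w_{\bmz u}^n,e_p^n)$ I would use $\|\ddiv w_{\bmz u}^n\|\le c\|w_{\bmz u}^n\|_a$ together with $\|e_p^n\|\le c\|(e_{\bmz u}^n,e_p^n)\|_{\tau,h}$, the latter coming from the $L^2$-pressure inf-sup: (I) gives $\sup_{\bmz v_h}(\ddiv\bmz v_h,e_p^n)/\|\bmz v_h\|_a=\|e_{\bmz u}^n\|_a$, and the inf-sup stability of the discrete Biot operator from Section~\ref{sec:Schur} (Theorem~\ref{theorem:stokes_poro} and Corollaries~\ref{corollary1}--\ref{corollary2}) then controls $\|e_p^n\|$ by $\|e_{\bmz u}^n\|_a$ plus the stabilization seminorms of $e_p^n$, all of which sit inside $\|(e_{\bmz u}^n,e_p^n)\|_{\tau,h}$. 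This yields $\|E^n\|^2-\|E^{n-1}\|^2\le 2\tau\,d^n\,\|E^n\|$, with $E^n=(e_{\bmz u}^n,e_p^n)$ and $d^n$ the bracketed data term of \eqref{ine:e_u}; dividing by $\|E^n\|+\|E^{n-1}\|\ge\|E^n\|$ gives $\|E^n\|-\|E^{n-1}\|\le 2\tau d^n$, and summation over $n$ is \eqref{ine:e_u}. Inequality \eqref{ine:e_p_L2} then follows by inserting \eqref{ine:e_u} into $\|e_p^n\|\le c\|(e_{\bmz u}^n,e_p^n)\|_{\tau,h}$.

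For the $a_p$-norm estimates \eqref{ine:e_p}--\eqref{ine:e_p_1} I would instead test (II) with $q_h=\bar{\partial}_t e_p^n$ and the time-differenced version of (I) with $\bmz v_h=\bar{\partial}_t e_{\bmz u}^n$, so that the mixed term becomes $\|\bar{\partial}_t e_{\bmz u}^n\|_a^2$; using $2a_p(e_p^n,\bar{\partial}_t e_p^n)\ge\tau^{-1}(\|e_p^n\|_{a_p}^2-\|e_p^{n-1}\|_{a_p}^2)$ produces a telescoping inequality for $\|e_p^n\|_{a_p}^2$ with nonnegative surplus $2\tau(\|\bar{\partial}_t e_{\bmz u}^n\|_a^2+\varepsilon h^2\|\nabla\bar{\partial}_t e_p^n\|^2)$ and right-hand side $2\tau|R^n(\bar{\partial}_t e_p^n)|$. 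Its three terms are split by Cauchy--Schwarz and absorbed into the surplus with Young's inequality; the only delicate one is $2\tau(\ddiv w_{\bmz u}^n,\bar{\partial}_t e_p^n)$, for which a plain integration by parts would cost a factor $(\varepsilon h^2)^{-1/2}$, so instead I would invoke $\|\bar{\partial}_t e_p^n\|\le c(\|\bar{\partial}_t e_{\bmz u}^n\|_a+\varepsilon^{1/2}h\|\nabla\bar{\partial}_t e_p^n\|)$ — the same $L^2$-pressure inf-sup applied to the differenced (I) — so that $2\tau(\ddiv w_{\bmz u}^n,\bar{\partial}_t e_p^n)$ is controlled by $c\tau\|w_{\bmz u}^n\|_a^2$ plus absorbable terms, with the correct power of $h$. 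Summing from $j=1$ to $n$ and taking square roots produces the $\tau^{1/2}(\sum_j\cdots)^{1/2}$ form. The two initial-data cases differ only in how the $e_p^0$ contribution enters the telescoped sum: if $(\bmz u_h^0,p_h^0)$ solves the stabilized Stokes problem \eqref{eqn:biot-mini-init-1}--\eqref{eqn:biot-mini-init-2} then $\|e_p^0\|_{a_p}$ is small and appears directly, giving \eqref{ine:e_p}; if instead $\ddiv\bmz u_h^0=0$, $p_h^0=0$ as in \eqref{eqn:biot-mini-ini-3}, then $e_p^0=-\bar p_h(0)$ is not small in the $a_p$-norm, so only $\tau\|e_p^0\|_{a_p}^2\le\|(e_{\bmz u}^0,e_p^0)\|_{\tau,h}^2$ is used, which produces the $(2\tau)^{-1/2}$ prefactor in \eqref{ine:e_p_1}.

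I expect the main obstacle to be precisely the coupling term $(\ddiv w_{\bmz u}^n,\cdot)$ and its time-difference: the norm $\|\cdot\|_{\tau,h}$ carries no unweighted $L^2$ control of the pressure error, so one cannot close the estimates by integration by parts without picking up an inverse power of $\varepsilon h^2$. The remedy is to route these terms through the $L^2$-norm inf-sup stability of the discrete Biot operator proved in Section~\ref{sec:Schur} — which is exactly why the abstract framework there is phrased with the unweighted $L^2(\Omega)$ norm on $Q$ — and once $\|e_p^n\|$ and $\|\bar{\partial}_t e_p^n\|$ are bounded by $\|\cdot\|_{\tau,h}$-quantities, all four inequalities follow from the same energy/telescoping/discrete-Gronwall scheme with the appropriate test function.
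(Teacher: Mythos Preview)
Your approach is essentially the paper's: same error equations, same test-function choices ($\bmz v_h=\bar\partial_t e_{\bmz u}^n$, $q_h=e_p^n$ for \eqref{ine:e_u}; the time-differenced (I) together with $q_h=\bar\partial_t e_p^n$ for \eqref{ine:e_p}--\eqref{ine:e_p_1}), same use of the inf-sup bound $\|e_p^n\|\le c\|e_{\bmz u}^n\|_a+c_1\varepsilon^{1/2}h\|\nabla e_p^n\|$ to close the $(\ddiv w_{\bmz u}^n,\cdot)$ terms, and the same telescoping mechanics.

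There is, however, a real gap in your treatment of the second initial-data case~\eqref{eqn:biot-mini-ini-3}. You say the two cases ``differ only in how the $e_p^0$ contribution enters the telescoped sum'' and propose to use $\tau\|e_p^0\|_{a_p}^2\le\|(e_{\bmz u}^0,e_p^0)\|_{\tau,h}^2$. The actual obstruction is structural: the time-differenced identity $a(\bar\partial_t e_{\bmz u}^n,\bmz v_h)-(\ddiv\bmz v_h,\bar\partial_t e_p^n)=0$ requires (I) to hold also at level $n-1$. At $n=1$ this is $a(e_{\bmz u}^0,\bmz v_h)-(\ddiv\bmz v_h,e_p^0)=0$, which holds exactly when $(\bmz u_h^0,p_h^0)$ satisfies~\eqref{eqn:biot-mini-init-1}; with the choice~\eqref{eqn:biot-mini-ini-3} it fails. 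Hence your recursion $\|e_p^n\|_{a_p}^2\le\|e_p^{n-1}\|_{a_p}^2+c\tau(\cdots)$ is valid only for $n\ge2$, and a bound on $\|e_p^0\|_{a_p}$ is of no use --- you need $\|e_p^1\|_{a_p}$. The paper gets it by going back to the \emph{non}-differenced energy identity at $n=1$ (the one you already used for \eqref{ine:e_u}) and splitting every product with Young's inequality so that all level-$1$ contributions except $\tau\|e_p^1\|_{a_p}^2$ are absorbed; this yields $\|e_p^1\|_{a_p}^2\le\frac{1}{2\tau}\|(e_{\bmz u}^0,e_p^0)\|_{\tau,h}^2+c\tau(\cdots)$, and summing the recursion from $n=2$ produces the $(2\tau)^{-1/2}$ prefactor in~\eqref{ine:e_p_1}.
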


\begin{proof}
  Choosing $\bmz v = \bmz v_h \in \bmz V_h$ in \eqref{eqn:biot-weak-1}
  and $q = q_h \in Q_h$ in \eqref{eqn:biot-weak-2}, and subtracting both
  equations from \eqref{eqn:biot-mini-1} and \eqref{eqn:biot-mini-2},
  and we have for all $\bmz v_h\in \bmz V_h$ and $q_h\in Q_h$
\begin{eqnarray}
&& a(e_{\bmz u}^n, \bmz v_h) - (\ddiv \bmz v_h, e_p^n) = 0, \label{eqn:e_u-weak} \\
&& (\ddiv \bar{\partial}_t e_{\bmz u}^n, q_h) + a_p(e_p^n, q_h) + \varepsilon h^2 (\nabla \bar{\partial}_t e_p^n, \nabla q_h)   \nonumber  \\
&& \qquad \qquad = (\ddiv w^n_{\bmz u}, q_h) + \varepsilon h^2 (\nabla w^n_{p}, \nabla q_h) - \varepsilon h^2 (\nabla \partial_t p(t_n), \nabla q_h). \label{eqn:e_p-weak}
\end{eqnarray}
Choose $v_h = \bar{\partial}_t e_{\bmz u}^n$ in \eqref{eqn:e_u-weak} and $q_h = e_p^n$ in \eqref{eqn:e_p-weak} and add these two equations together, we have
\begin{eqnarray}
\| (e_{\bmz u}^n, e_p^n) \|_{\tau, h}^2
& = &a(e_{\bmz u}^n, e_{\bmz u}^{n-1}) +   \varepsilon h^2 (\nabla e_p^n, \nabla e_p^{n-1}) \nonumber \\
	&+& \tau (\ddiv w_{\bmz u}^n, e_p^n) + \tau \varepsilon h^2 (\nabla w_p^n, \nabla e_p^n) \nonumber \\
	&-& \tau \varepsilon h^2 (\nabla \partial_t p(t_n), \nabla e_p^n)   \label{eqn:error-eu}\\
	& \leq & \| e_{\bmz u}^n \|_a \| e_{\bmz u}^{n-1} \|_a + \varepsilon h^2 \| \nabla e_p^n \| \| \nabla e_p^{n-1} \| \nonumber  \\
	& +&  \tau \| \ddiv w_{\bmz u}^n \| \| e_p^n \| + \tau \varepsilon h^2  \| \nabla w_p^n \| \| \nabla e_p^n \|   + \tau \varepsilon h^2 \| \nabla \partial_t p(t_n) \| \| \nabla e_p^n \| \nonumber
\end{eqnarray}
Thanks to the inf-sup condition \eqref{ine:inf-sup}, and \eqref{eqn:e_u-weak} we have
\begin{eqnarray}
\| e_p^n \| &\leq &c \sup_{\bmz v_h \neq 0} \frac{(\ddiv \bmz v_h, e_p^n)}{\| \bmz v_h \|_a } + c_1 \varepsilon^{1/2} h \| \nabla e_p^n \| \nonumber \\
&=& c \sup_{v_h\in V_h} \frac{a(e_{\bmz u}^n, \bmz v_h)}{\| \bmz v_h \|_a} + c_1 \varepsilon^{1/2} h \| \nabla e_p^n \| = c \| e_{\bmz u}^n \|_a + c_1 \varepsilon^{1/2} h \| \nabla e_p^n \|.  \label{ine:inf-sup-1}
\end{eqnarray}
Note, for MINI element, we have $c_1 = 0$ and, for P1-P1 element, $c_1 > 0$.  Therefore,
\begin{eqnarray*}
\| (e_{\bmz u}^n, e_p^n) \|_{\tau,h}^2 & \leq &\| e_{\bmz u}^n \|_a \| e_{\bmz u}^{n-1} \|_a + \varepsilon h^2 \| \nabla e_p^n \| \| \nabla e_p^{n-1} \|  \\
& & \quad + c \tau \| w_{\bmz u}^n \|_a \left( \| e_{\bmz u}^n \|_a + c_1 \varepsilon^{1/2} h \| \nabla e_p^n \| \right)  \nonumber \\
 && \quad +  \tau \varepsilon h^2  \| \nabla w_p^n \| \| \nabla e_p^n \| +  \tau \varepsilon h^2 \| \nabla \partial_t p(t_n) \| \| \nabla e_p^n \|
\end{eqnarray*}
which implies
$$
\| (e_{\bmz u}^n, e_p^{n}) \|_{\tau, h}  \leq \| (e_{\bmz u}^{n-1},  e_p^{n-1}) \|_{\tau,h} + c \tau \left( \| w^n_{\bmz u} \|_a + \varepsilon^{1/2} h \| \nabla w_p^n \| + \varepsilon^{1/2} h \| \nabla \partial_t p(t_n) \| \right)
$$
We sum over all time steps and we have the estimate \eqref{ine:e_u}.

For the error estimate of $e_p^n$, from \eqref{eqn:e_u-weak}, we have,
\begin{equation} \label{eqn:error-ep}
a(\bar{\partial}_t e_{\bmz u}^n, \bmz v_h) - (\ddiv \bmz v_h, \bar{\partial}_t e_p^n) = 0.
\end{equation}
Note that, if the initial data $\bmz u_h^0$ and $p_h^0$ satisfy
\eqref{eqn:biot-mini-init-1} and \eqref{eqn:biot-mini-init-2},
\eqref{eqn:error-ep} holds for $n=1,2,3,\ldots$. Otherwise, for
initial data \eqref{eqn:biot-mini-ini-3}, \eqref{eqn:error-ep} only
holds for $n=2,3, \ldots$

Choosing $\bmz v_h = \bar{\partial}_t e_{\bmz u}^n$ in
\eqref{eqn:error-ep} and $q_h = \bar{\partial}_t e_p^n$ in
\eqref{eqn:e_p-weak} and adding the two equations, and we have
\begin{eqnarray*}
&  &\tau^{-1} \| e_{\bmz u}^n - e_{\bmz u}^{n-1} \|_a^2 + \| e_p^n \|_{a_p}^2 + \tau \varepsilon h^2 \| \nabla \bar{\partial}_t e_p^n \|^2 \\
&&\quad \leq \| e_p^n \|_{a_p} \| e_p^{n-1} \|_{a_p} + \| \ddiv w_{\bmz u}^n \| \| e_p^n - e_p^{n-1} \|  \\
&& \qquad + \tau \varepsilon h^2 \| \nabla w_p^n \| \| \nabla \bar{\partial}_t e_p^n\|  + \tau \varepsilon h^2 \| \nabla \partial_t p(t_n) \| \| \nabla \bar{\partial}_t e_p^n\|  \\
&&\quad \leq \| e_p^n \|_{a_p} \| e_p^{n-1} \|_{a_p} + c\| \ddiv w_{\bmz u}^n  \|\left( \| e_{\bmz u}^n - e_{\bmz u}^{n-1} \|_a + c_1 \varepsilon^{1/2} h  \| \nabla (e_p^n - e_p^{n-1}) \|  \right) \\
&& \qquad + \tau \varepsilon h^2 \| \nabla w_p^n \| \| \nabla \bar{\partial}_t e_p^n\|  + \tau \varepsilon h^2 \| \nabla \partial_t p(t_n) \| \| \nabla \bar{\partial}_t e_p^n\|  \\
&& \quad \leq \frac{1}{2} \| e_p^n \|_{a_p}^2 + \frac{1}{2} \| e_p^{n-1}\|_{a_p}^2 + c \tau \| \ddiv w_{\bmz u}^n \|^2 + \tau^{-1} \| e_{\bmz u}^n - e_{\bmz u}^{n-1} \|_a^2  + \frac{1}{3} \tau \varepsilon h^2 \| \nabla \bar{\partial}_t e_p^n \|^2 \\
&& \qquad + \frac{3}{4} \tau \varepsilon h^2  \| \nabla w_p^n \|^2 + \frac{1}{3} \tau \varepsilon h^2  \| \nabla \bar{\partial}_t e_p^n\|^2 + \frac{3}{4} \tau \varepsilon h^2 \| \nabla \partial_t p(t_n) \|^2 + \frac{1}{3} \tau \varepsilon h^2  \| \nabla \bar{\partial}_t e_p^n\|^2,
\end{eqnarray*}
where we use the inf-sup condition \eqref{ine:inf-sup-1} to estimate $\| e_p^n - e_p^{n-1} \|$.  Now we have
\begin{equation}\label{ine:err-ep-recur}
\| e_p^n \|_{a_p}^2 \leq \| e_p^{n-1} \|_{a_p}^2 + c   \left( \tau \| w_{\bmz u}^n \|_a^2  +   \tau \varepsilon h^2  \| \nabla w_p^n \|^2 + \tau \varepsilon h^2  \| \nabla \partial_t p(t_n) \|^2 \right).
\end{equation}
Now we need to consider two different cases due to the initial data.  If the initial data satisfy \eqref{eqn:biot-mini-init-1} and \eqref{eqn:biot-mini-init-2}, then above inequality \eqref{ine:err-ep-recur} holds for $n=1$ and by summing up from $1$ to $n$, we can get \eqref{ine:e_p}.

If the initial data is only defined by \eqref{eqn:biot-mini-ini-3}, \eqref{ine:err-ep-recur} does not hold for $n=1$ anymore, we need to estimate $\| e_p^1 \|$ separately.  In order to do that, we take $n=1$ in \eqref{eqn:error-eu} and then use the inf-sup condition \eqref{ine:inf-sup-1} to estimate $\| e_p^1 \|$,
\begin{eqnarray*}
\| e_{\bmz u}^1 \|_a^2 + \tau \| e_p^1 \|_{a_p}^2  + \varepsilon h^2 \| \nabla e_p^1 \|^2
& = & a(e_{\bmz u}^1, e_{\bmz u}^{0}) +   \varepsilon h^2 (\nabla e_p^1, \nabla e_p^{0}) \nonumber \\
	& +& \tau (\ddiv w_{\bmz u}^1, e_p^1) + \tau \varepsilon h^2 (\nabla w_p^1, \nabla e_p^1) \nonumber \\
	&-& \tau \varepsilon h^2 (\nabla \partial_t p(t_1), \nabla e_p^1)  \\
	& \leq & \frac{1}{2} \| e_{\bmz u}^1 \|_a^2 + \frac{1}{2} \| e_{\bmz u}^0 \|^2_a + \frac{1}{2} \varepsilon h^2 \| \nabla e_p^1 \|^2  + \frac{1}{2} \varepsilon h^2 \| \nabla e_p^0 \|^2 \\
	& + & c \tau^2 \| w_{\bmz u}^1 \|_a^2 + \frac{1}{2} \| e_{\bmz u}^1 \|_a^2 + \frac{1}{6} \varepsilon h^2 \| \nabla e_p^1 \|^2  + \frac{3}{2}\tau^2 \varepsilon h^2 \| \nabla w_p^1 \|^2  \\
	& + & \frac{1}{6} \varepsilon h^2 \| \nabla e_p^1 \|^2 + \frac{3}{2}\tau^2 \varepsilon h^2 \| \nabla \partial_tp(t_1) \|^2 + \frac{1}{6} \varepsilon h^2 \| \nabla e_p^1 \|^2.
\end{eqnarray*}
This means
\begin{eqnarray*}
\| e_p^1 \|_{a_p}^2 & \leq &\frac{1}{2 \tau}  \|( e_{\bmz u}^0, e_p^0) \|_{\tau, h} + c \left(
\tau \| w_{\bmz u}^1 \|_a^2  +   \tau \varepsilon h^2  \| \nabla w_p^1 \|^2 + \tau \varepsilon h^2  \| \nabla \partial_t p(t_1) \|^2
 \right).
\end{eqnarray*}
Now we summing up \eqref{ine:err-ep-recur} from $2$ to $n$ and use above estimate of $\| e_p^1 \|^2_{a_p}$, we can get \eqref{ine:e_p_1}.

Finally, the estimate \eqref{ine:e_p_L2} follows directly from \eqref{ine:e_u} and \eqref{ine:inf-sup-1}.
\end{proof}

Next lemma give the estimations of $w_{\bmz u}^j$ and $w_p^j$.
\begin{lemma} \label{lem:error-wj}
Let $\bmz u(t)$ and $p(t)$ be the solution of \eqref{eqn:biot-weak-1} and \eqref{eqn:biot-weak-2}, $w_{\bmz u}^j = \partial_t \bmz u(t_j) - \frac{\bar{\bmz u}_h(t_{j}) - \bar{\bmz u}_h(t_{j-1})}{\tau}$ and $\rho_{\bmz u}(t) = \bmz u(t) - \bar{\bmz u}_h(t)$. Assume $\bmz \partial_{tt}u(t) \in L^1((0,T], \bmz [H_0^1(\Omega)]^d) \cap L^2((0,T], \bmz [H_0^1(\Omega)]^d) $ and $ \partial_{tt} p(t) \in L^1((0,T], H_0^1(\Omega)) \cap L^2((0,T], H_0^1(\Omega)) $, we have,
\begin{eqnarray}
&& \sum_{j=1}^n \| w_{\bmz u}^j \|_a \leq c \left( \int_{0}^{t_n} \| \partial_{tt} \bmz u \|_1 \mathrm{d} t + \frac{1}{\tau} \int_{0}^{t_n} \| \partial_t \rho_{\bmz u} \|_1 \mathrm{d} t  \right), \label{ine:wj-l1} \\
&& \sum_{j=1}^n \| w_{\bmz u}^j \|_a^2 \leq c \left( \tau \int_{0}^{t_n} \| \partial_{tt} \bmz u \|^2_1 \mathrm{d} t + \frac{1}{\tau} \int_{0}^{t_n} \| \partial_t \rho_{\bmz u} \|^2_1 \mathrm{d} t  \right). \label{ine:wj-l2}
\end{eqnarray}
Moreover, let $w_p^j = \partial_t p(t_j) - \frac{\bar{p}_h(t_{j}) - \bar{p}_h(t_{j-1})}{\tau}$ and $\rho_{p} = p(t) - \bar{p}_h(t)$. we have
\begin{eqnarray}
&& \sum_{j=1}^n \| \nabla w_{p}^j \| \leq c \left( \int_{0}^{t_n} \| \partial_{tt} p \|_1 \mathrm{d} t + \frac{1}{\tau} \int_{0}^{t_n} \| \partial_t \rho_{p} \|_1 \mathrm{d} t  \right), \label{ine:wjp-l1} \\
&& \sum_{j=1}^n \| \nabla w_{p}^j \|^2 \leq c \left( \tau \int_{0}^{t_n} \| \partial_{tt} p \|^2_1 \mathrm{d} t + \frac{1}{\tau} \int_{0}^{t_n} \| \partial_t \rho_{p} \|^2_1 \mathrm{d} t  \right). \label{ine:wjp-l2}
\end{eqnarray}
\end{lemma}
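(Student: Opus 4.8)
The plan is to follow the classical argument of Thom{\'e}e: for each $j$ I would split $w^j$ into a time-truncation (consistency) term and a term involving only the time increment of the elliptic-projection error, and estimate the two contributions separately. The key preliminary observation is that the elliptic projection commutes with $\partial_t$, so that $\partial_t\bar{\bmz u}_h=\overline{\partial_t\bmz u}_h$ and hence
\[
\bar{\bmz u}_h(t_j)-\bar{\bmz u}_h(t_{j-1})=\bigl(\bmz u(t_j)-\bmz u(t_{j-1})\bigr)-\bigl(\rho_{\bmz u}(t_j)-\rho_{\bmz u}(t_{j-1})\bigr),
\]
with the analogous identity for $p$.

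First I would write
\[
w_{\bmz u}^j=\Bigl(\partial_t\bmz u(t_j)-\frac{\bmz u(t_j)-\bmz u(t_{j-1})}{\tau}\Bigr)+\frac1\tau\bigl(\rho_{\bmz u}(t_j)-\rho_{\bmz u}(t_{j-1})\bigr).
\]
For the first bracket I would use Taylor's formula with integral remainder,
\[
\partial_t\bmz u(t_j)-\frac{\bmz u(t_j)-\bmz u(t_{j-1})}{\tau}=\frac1\tau\int_{t_{j-1}}^{t_j}(s-t_{j-1})\,\partial_{tt}\bmz u(s)\,{\rm d}s,
\]
so that, since $|s-t_{j-1}|\le\tau$ on $[t_{j-1},t_j]$ and $\|\cdot\|_a\le c\|\cdot\|_1$, its $a$-norm is bounded by $c\int_{t_{j-1}}^{t_j}\|\partial_{tt}\bmz u\|_1\,{\rm d}s$. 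For the second bracket I would write $\rho_{\bmz u}(t_j)-\rho_{\bmz u}(t_{j-1})=\int_{t_{j-1}}^{t_j}\partial_t\rho_{\bmz u}(s)\,{\rm d}s$, so that its $a$-norm is bounded by $\frac{c}{\tau}\int_{t_{j-1}}^{t_j}\|\partial_t\rho_{\bmz u}\|_1\,{\rm d}s$. Summing over $j=1,\dots,n$ and using that the intervals $[t_{j-1},t_j]$ partition $[0,t_n]$ yields \eqref{ine:wj-l1}. For \eqref{ine:wj-l2} I would instead apply the Cauchy--Schwarz inequality in time to each integral before squaring, i.e.\ $\bigl(\int_{t_{j-1}}^{t_j}\|\partial_{tt}\bmz u\|_1\,{\rm d}s\bigr)^2\le\tau\int_{t_{j-1}}^{t_j}\|\partial_{tt}\bmz u\|_1^2\,{\rm d}s$ and $\bigl(\frac1\tau\int_{t_{j-1}}^{t_j}\|\partial_t\rho_{\bmz u}\|_1\,{\rm d}s\bigr)^2\le\frac1\tau\int_{t_{j-1}}^{t_j}\|\partial_t\rho_{\bmz u}\|_1^2\,{\rm d}s$, and then sum. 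The estimates \eqref{ine:wjp-l1}--\eqref{ine:wjp-l2} for $w_p^j$ follow in exactly the same way, replacing $\|\cdot\|_a$ by $\|\nabla(\cdot)\|=|\cdot|_1\le\|\cdot\|_1$, the elliptic projection for $\bmz u$ by the one for $p$, and $\rho_{\bmz u}$ by $\rho_p$.

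I do not expect a serious obstacle here; the only points that need a little care are (i) justifying that the elliptic projection commutes with $\partial_t$, which makes $\partial_t\rho_{\bmz u}$ and $\partial_t\rho_p$ well defined and, via \eqref{rhou}--\eqref{rhop-2} applied to $\partial_t\bmz u$ and $\partial_t p$, controllable by the data, and (ii) checking that the stated regularity $\partial_{tt}\bmz u\in L^1\cap L^2$ and $\partial_{tt}p\in L^1\cap L^2$ in time is precisely what makes the remainder integrals finite in both the $L^1$- and the $L^2$-in-time versions.
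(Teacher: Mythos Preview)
Your proposal is correct and follows essentially the same approach as the paper: the same splitting of $w_{\bmz u}^j$ into the time-truncation term $\partial_t\bmz u(t_j)-\tau^{-1}(\bmz u(t_j)-\bmz u(t_{j-1}))$ represented via the integral remainder of Taylor's formula and the projection-error increment $\tau^{-1}\int_{t_{j-1}}^{t_j}\partial_t\rho_{\bmz u}\,\mathrm{d}s$, then direct summation for \eqref{ine:wj-l1} and Cauchy--Schwarz in time for \eqref{ine:wj-l2}, with the pressure estimates handled identically. The paper's proof is line-for-line the argument you outline.
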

\begin{proof}
We consider
\begin{equation*}
w_{\bmz u}^j  =\left( \partial_t \bmz{u}(t_j) - \frac{\bmz u(t_j) - \bmz u(t_{j-1})}{\tau}\right) + \left(\frac{\bmz u(t_j) - \bmz u(t_{j-1})}{\tau} - \frac{\bar{\bmz u}_h(t_{j}) - \bar{\bmz u}_h(t_{j-1})}{\tau} \right)=: w^j_{\bmz u, 1} + w^j_{\bmz u, 2}.
\end{equation*}
Note that
\begin{eqnarray*}
w^j_{\bmz u, 1} &=& \frac{1}{\tau} \int_{t_{j-1}}^{t_j} (s - t_{j-1}) \partial_{tt} \bmz u(s) \mathrm{d}s, \\
w^j_{\bmz u, 2} &= &\frac{1}{\tau} \int_{t_{j-1}}^{t_j} \partial_t \rho_{\bmz u}(s) \mathrm{d}s,
\end{eqnarray*}
then we have
\begin{eqnarray*}
\| w^j_{\bmz u} \|_a & \leq &\| w^j_{\bmz u, 1} \|_a + \| w^j_{\bmz u,2} \|_a \\
  & = & \frac{1}{\tau} \| \int_{t_{j-1}}^{t_j} (s - t_{j-1}) \partial_{tt} \bmz u(s) \mathrm{d}s \|_a + \frac{1}{\tau} \| \int_{t_{j-1}}^{t_j} \partial_t \rho_{\bmz u}(s) \mathrm{d}s \|_a \\
	& \leq & c \left( \int_{t_{j-1}}^{t_j}  \| \partial_{tt} \bmz u \|_1 \mathrm{d}s  + \frac{1}{\tau} \int_{t_{j-1}}^{t_j} \| \partial_t \rho_{\bmz u}\|_1 \mathrm{d}s \right),
\end{eqnarray*}
then \eqref{ine:wj-l1} follows directly. Moreover, we have
\begin{eqnarray*}
\| w_{\bmz u}^j \|_a^2 & \leq &c \left[ \tau^{1/2} \left( \int_{t_{j-1}}^{t_{j}} \| \partial_{tt} \bmz u \|_1^2 \mathrm{d}s \right)^{1/2}  + \tau^{-1/2} \left( \int_{t_{j-1}}^{t_j} \| \partial_t \rho_{\bmz u} \|_1^2 \mathrm{d}s  \right)^{1/2}   \right]^2 \\
	& \leq & c \left(  \tau \int_{t_{j-1}}^{t_{j}} \| \partial_{tt} \bmz u \|_1^2 \mathrm{d}s + \frac{1}{\tau}  \int_{t_{j-1}}^{t_j} \| \partial_t \rho_{\bmz u} \|_1^2 \mathrm{d}s \right),
\end{eqnarray*}
then \eqref{ine:wj-l2} follows directly.  Estimates \eqref{ine:wjp-l1} and \eqref{ine:wjp-l2} can be obtained similarly, which completes the proof
\end{proof}

Assuming extra regularities of the exact solutions
$\bmz u(t)$ and $p(t)$ as usual for convergence analysis of the finite
element method, we have the following theorem about the error
estimates for the error $(\bmz u - \bmz u_h)(t_n)$ and
$(p - p_h)(t_n)$.

We assume that $u$ and $p$ have all the regularity required by the
proof of the theorem below, which more precisely means that, for
$q=1,2,\infty$ and $s=1,2$ we have:
\begin{eqnarray*}
&&  \bmz u(t) \in L^{\infty}\left((0,T], \bmz [H_0^1(\Omega)]^d\right) \cap
  L^{\infty} \left((0, T], \bmz [H^2(\Omega)]^d \right),\\
&& \partial_t \bmz u(t) \in L^s\left( (0,T], \bmz [H^2(\Omega)]^d \right),
   \quad
\partial_{tt} \bmz u(t) \in L^s\left( (0, T], \bmz [H_0^1(\Omega)]^d
   \right),\\
&& p(t) \in L^{\infty} \left( (0,T], H^1_0(\Omega) \right) \cap L^{\infty} \left( (0,T], H^2(\Omega) \right),\\
&& \partial_t p(t) \in
L^q((0,T],H_0^1(\Omega))\cap L^s((0,T],H^2(\Omega)),\quad
 \partial_{tt} p(t) \in L^s\left( (0, T], H_0^1(\Omega) \right)
\end{eqnarray*}

\begin{theorem}
\label{thm:error-mini} Let $\bmz u(t)$ and $p(t)$ be
  the solution of \eqref{eqn:biot-weak-1} and \eqref{eqn:biot-weak-2},
  $\bmz u_h^n$ and $p_h^n$ be the solution of \eqref{eqn:biot-mini-1}
  and \eqref{eqn:biot-mini-2}.  For displacement $\bmz u(t)$, we have
\begin{eqnarray}
&& \| \left( \bmz u(t_n) - \bmz u_h^n, p(t_n) - p_h^n \right) \|_{\tau,h}  \nonumber \\
&& \leq  \|  \left( e_{\bmz u}^0, e_p^0 \right) \|_{\tau, h} +
 c \left\{  \tau \left[  \int_{0}^{t_n} \| \partial_{tt} \bmz u \|_1 \mathrm{d} t + \int_{0}^{t_n} \varepsilon^{1/2} h | \partial_{tt} p |_1 \mathrm{d}t  \right]    \right.  \nonumber \\
 && \quad   +  h  \left[ | \bmz u(t_n) |_2 + | p(t_n) |_1 + (\tau^{1/2}+\varepsilon^{1/2} h) | p(t_n)|_2 +
\int_{0}^{t_n} \left(| \partial_t \bmz u|_2 + | \partial_t p |_1 \right) \mathrm{d}t  \right. \nonumber \\
 && \quad \left.  \left.
      +   \int_{0}^{t_n} \varepsilon^{1/2} h | \partial_t p |_2 \mathrm{d}t
\right]  + t_n\max_{1\leq j\leq n} \varepsilon^{1/2} h \| \nabla \partial_t p(t_j) \|  \right\}.  \label{ine:error-mini-u}
\end{eqnarray}
For pore pressure $p(t)$, if the initial data $\bmz u_h^0$ and $p_h^0$ satisfy \eqref{eqn:biot-mini-init-1} and \eqref{eqn:biot-mini-init-2}, we have,
\begin{eqnarray}
&& \| p(t_n) - p_h^n \|_{a_p} \nonumber  \\
&&  \leq    \| e_p^0 \|_{a_p} + c \left\{ \tau \left[  \left( \int_{0}^{t_n} \| \partial_{tt} \bmz u \|^2_1 \mathrm{d} t \right)^{1/2} + \left( \int_{0}^{t_n}\varepsilon h^2 \| \partial_{tt} p \|^2_1 \mathrm{d} t \right)^{1/2} \right] \right. \nonumber \\
&&  \quad +  h \left[ |p(t_n)|_2 +  \left( \int_{0}^{t_n} \left(| \partial_t \bmz u|_2 + | \partial_t p |_1 \right) ^2 \mathrm{d} t  \right)^{1/2} +  \left( \int_{0}^{t_n} \varepsilon h^2 | \partial_t p |_2^2 \mathrm{d} t  \right)^{1/2}   \right] \nonumber \\
&& \left. \quad + \sqrt{t_n}\max_{1\leq j\leq n} \varepsilon^{1/2} h \| \nabla \partial_t p(t_j) \|   \right\}. \label{ine:error-mini-p}
\end{eqnarray}
If the initial data $\bmz u_h^0$ and $p_h^0$ are defined by \eqref{eqn:biot-mini-ini-3}, we have,
\begin{eqnarray}
&& \| p(t_n) - p_h^n \|_{a_p} \nonumber  \\
&&  \leq   \frac{1}{\sqrt{2\tau}}  \| \left( e_{\bmz u}^0, e_p^{0} \right) \|_{\tau, h}+ c \left\{ \tau \left[  \left( \int_{0}^{t_n} \| \partial_{tt} \bmz u \|^2_1 \mathrm{d} t \right)^{1/2} + \left( \int_{0}^{t_n}\varepsilon h^2 \| \partial_{tt} p \|^2_1 \mathrm{d} t \right)^{1/2} \right] \right. \nonumber \\
&&  \quad +  h \left[ |p(t_n)|_2 +  \left( \int_{0}^{t_n} \left(| \partial_t \bmz u|_2 + | \partial_t p |_1 \right) ^2 \mathrm{d} t  \right)^{1/2} +  \left( \int_{0}^{t_n} \varepsilon h^2 | \partial_t p |_2^2 \mathrm{d} t  \right)^{1/2}   \right] \nonumber \\
&& \left. \quad + \sqrt{t_n}\max_{1\leq j\leq n} \varepsilon^{1/2} h \| \nabla \partial_t p(t_j) \|   \right\}. \label{ine:error-mini-p-1}
\end{eqnarray}
Moreover, for pore pressure,   we also have the following error estimate in $L^2$-norm,
\begin{eqnarray}
&& \| p(t_n) -  p_h^n \|  \nonumber \\
&& \leq c \|  \left( e_{\bmz u}^0, e_p^0 \right) \|_{\tau, h} +
 c \left\{  \tau \left[  \int_{0}^{t_n} \| \partial_{tt} \bmz u \|_1 \mathrm{d} t + \int_{0}^{t_n} \varepsilon^{1/2} h | \partial_{tt} p |_1 \mathrm{d}t  \right]    \right.  \nonumber \\
 && \quad   +  h^2 | p(t_n)|_2  +  h  \left[  \int_{0}^{t_n} \left(| \partial_t \bmz u|_2 + | \partial_t p |_1 \right) \mathrm{d}t   + \int_{0}^{t_n} \varepsilon^{1/2} h | \partial_t p |_2 \mathrm{d}t
\right]  \nonumber \\
 && \quad \left.
 + t_n\max_{1\leq j\leq n} \varepsilon^{1/2} h \| \nabla \partial_t p(t_j) \|  \right\}.  \label{ine:error-mini-p-L2}
\end{eqnarray}
\end{theorem}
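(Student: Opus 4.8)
The plan is to prove all four estimates by the Thom\'ee-type splitting already introduced in \eqref{eqn:err_u_decomp}--\eqref{eqn:err_p_decomp}: write $\bmz u(t_n)-\bmz u_h^n=\rho_{\bmz u}^n-e_{\bmz u}^n$ and $p(t_n)-p_h^n=\rho_p^n-e_p^n$, bound the elliptic-projection part $(\rho_{\bmz u}^n,\rho_p^n)$ by the approximation estimates \eqref{rhou}--\eqref{rhop-2}, and bound the ``discrete error'' part $(e_{\bmz u}^n,e_p^n)$ by Lemma~\ref{lem:mini-e_u-e_p} fed with the $w$-estimates of Lemma~\ref{lem:error-wj}. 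For the combined estimate \eqref{ine:error-mini-u} I would first apply the triangle inequality in the $\|\cdot\|_{\tau,h}$ norm; the projection part equals $\|\rho_{\bmz u}^n\|_a+\tau^{1/2}\|\rho_p^n\|_{a_p}+\varepsilon^{1/2}h\|\nabla\rho_p^n\|$, and inserting \eqref{rhou}, \eqref{rhop-1}, \eqref{rhop-2} yields exactly the term $h\bigl[|\bmz u(t_n)|_2+|p(t_n)|_1+(\tau^{1/2}+\varepsilon^{1/2}h)|p(t_n)|_2\bigr]$.

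For the discrete part I would invoke \eqref{ine:e_u}, which reduces the bound to the initial error $\|(e_{\bmz u}^0,e_p^0)\|_{\tau,h}$ plus $c\tau\sum_{j=1}^n\bigl(\|w_{\bmz u}^j\|_a+\varepsilon^{1/2}h\|\nabla w_p^j\|+\varepsilon^{1/2}h\|\nabla\partial_t p(t_j)\|\bigr)$. Multiplying \eqref{ine:wj-l1} and \eqref{ine:wjp-l1} by $\tau$ converts $\tau\sum_j\|w_{\bmz u}^j\|_a$ into $c\bigl(\tau\int_0^{t_n}\|\partial_{tt}\bmz u\|_1\,dt+\int_0^{t_n}\|\partial_t\rho_{\bmz u}\|_1\,dt\bigr)$, and similarly for the pressure; since the elliptic projection commutes with $\partial_t$, the estimates \eqref{rhou}--\eqref{rhop-1} apply to $\partial_t\rho_{\bmz u}$ and $\partial_t\rho_p$ (with $\partial_t\bmz u$, $\partial_t p$ on the right), giving $\int_0^{t_n}\|\partial_t\rho_{\bmz u}\|_1\,dt\le ch\int_0^{t_n}(|\partial_t\bmz u|_2+|\partial_t p|_1)\,dt$ and $\int_0^{t_n}\|\nabla\partial_t\rho_p\|\,dt\le ch\int_0^{t_n}|\partial_t p|_2\,dt$. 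The remaining piece is the consistency error of the stabilization, which has no continuous counterpart; here one simply uses $\tau\sum_{j=1}^n\varepsilon^{1/2}h\|\nabla\partial_t p(t_j)\|\le t_n\max_{1\le j\le n}\varepsilon^{1/2}h\|\nabla\partial_t p(t_j)\|$ because $n\tau=t_n$. Collecting everything produces \eqref{ine:error-mini-u}.

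The two $a_p$-norm estimates for the pressure, \eqref{ine:error-mini-p} and \eqref{ine:error-mini-p-1}, follow the same pattern starting from $\|p(t_n)-p_h^n\|_{a_p}\le\|\rho_p^n\|_{a_p}+\|e_p^n\|_{a_p}$: bound $\|\rho_p^n\|_{a_p}\le ch|p(t_n)|_2$ by \eqref{rhop-1}, and bound $\|e_p^n\|_{a_p}$ by \eqref{ine:e_p} when the discrete initial data satisfy \eqref{eqn:biot-mini-init-1}--\eqref{eqn:biot-mini-init-2}, or by \eqref{ine:e_p_1} in the case \eqref{eqn:biot-mini-ini-3} (which is exactly why the prefactor changes from $\|(e_{\bmz u}^0,e_p^0)\|_{\tau,h}$ to $\frac{1}{\sqrt{2\tau}}\|(e_{\bmz u}^0,e_p^0)\|_{\tau,h}$). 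The $\ell^2$-in-time sums appearing there are handled with the square bounds \eqref{ine:wj-l2}, \eqref{ine:wjp-l2}, together with $\tau^{1/2}\bigl(\sum_{j=1}^n\varepsilon h^2\|\nabla\partial_t p(t_j)\|^2\bigr)^{1/2}\le\sqrt{t_n}\,\max_{1\le j\le n}\varepsilon^{1/2}h\|\nabla\partial_t p(t_j)\|$. Finally, the $L^2$ estimate \eqref{ine:error-mini-p-L2} is obtained exactly as \eqref{ine:error-mini-u}, but using the higher-order projection bound \eqref{rhop-2}, $\|\rho_p^n\|\le ch^2|p(t_n)|_2$, and the $L^2$ bound \eqref{ine:e_p_L2} for $\|e_p^n\|$.

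Conceptually no step is difficult, since all the analytic work is packaged in Lemmas~\ref{lem:mini-e_u-e_p} and~\ref{lem:error-wj}; the main obstacle is purely bookkeeping -- keeping the various $\tau^{\pm1/2}$ and $\varepsilon^{1/2}h$ weights consistent, correctly absorbing the $1/\tau$ factors that Lemma~\ref{lem:error-wj} produces once one multiplies by $\tau$ (or $\tau^{1/2}$), and tracking how the two choices of discrete initial data propagate through to the different prefactors and to the fact that the recursion \eqref{ine:err-ep-recur} is only valid from $n=2$ in the second case, forcing the separate estimate of $\|e_p^1\|_{a_p}$.
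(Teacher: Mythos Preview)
Your proposal is correct and follows essentially the same route as the paper: split the error via the elliptic projection, bound the projection part by \eqref{rhou}--\eqref{rhop-2}, bound $(e_{\bmz u}^n,e_p^n)$ by the appropriate estimate from Lemma~\ref{lem:mini-e_u-e_p}, feed in Lemma~\ref{lem:error-wj} (using the $L^1$-in-time bounds for \eqref{ine:error-mini-u} and \eqref{ine:error-mini-p-L2}, the $L^2$-in-time bounds for \eqref{ine:error-mini-p} and \eqref{ine:error-mini-p-1}), and use $n\tau=t_n$ for the stabilization consistency term. One minor slip: in the first $a_p$-case the initial-data prefactor coming from \eqref{ine:e_p} is $\|e_p^0\|_{a_p}$, not $\|(e_{\bmz u}^0,e_p^0)\|_{\tau,h}$, so your parenthetical description of the prefactor change should start from $\|e_p^0\|_{a_p}$.
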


\begin{proof}
  The estimate \eqref{ine:error-mini-u} follows directly from
  \eqref{eqn:err_u_decomp}, \eqref{eqn:err_p_decomp}, \eqref{rhou}, \eqref{rhop-1}, \eqref{ine:e_u},
  \eqref{ine:wj-l1}, \eqref{ine:wjp-l1},
  and triangle inequality. Note that we used \eqref{rhou} and
  \eqref{rhop-1} not only for $u$, $p$, but also their counterparts
  for $\partial_t \rho_u$ and $\partial_t \rho_p$.

Similarly,   \eqref{ine:error-mini-p} follows from \eqref{eqn:err_p_decomp},
  \eqref{ine:e_p}, \eqref{ine:wj-l2}, \eqref{ine:wjp-l2},
  \eqref{rhou}, \eqref{rhop-1}, and their versions for the time
  derivatives of the error and the triangle inequality.

  Next, for the second set of initial conditions,
  \eqref{ine:error-mini-p-1} follows from \eqref{eqn:err_p_decomp},
  \eqref{ine:e_p_1}, \eqref{ine:wj-l2}, \eqref{ine:wjp-l2},
  \eqref{rhou}, \eqref{rhop-1} (applied also for time derivatives of
  the error), and the triangle inequality.

  Finally, \eqref{ine:error-mini-p-L2} follows from
  \eqref{eqn:err_p_decomp}, \eqref{rhop-2}, \eqref{ine:e_p_L2}, \eqref{ine:wj-l1},
  \eqref{ine:wjp-l1} and the triangle inequality.
\end{proof}

\begin{remark}
  All the error estimates in Theorem~\ref{thm:error-mini} consist of two
  parts.  One part is the error for $t>0$ which, in all cases, gives
  optimal convergence order.  The other part is the error in the
  approximation of the
  initial data, i.e., $ \| (e_{\bmz u}^0, e_p^{0} )\|_{\tau,h}$ and
  $\| e_p^0 \|_{a_p}$.  From the  triangle inequality, we have
\begin{eqnarray*}
\| ( e_{\bmz u}^0, e_p^{0}) \|_{\tau, h}& \leq &
c \left[ \| ( \rho_{\bmz u}^0, \rho_p^{0} )\|_{\tau,h}  + \| \left( \bmz{u}(0) - \bmz u_h^0, p(0) - p_h^0 \right) \|_{\tau,h} \right] , \\
\| e_p^0 \|_{a_p} &\leq &\| \rho_p^0 \|_{a_p} + \| p(0) - p_h^0 \|_{a_p},
\end{eqnarray*}
where $\rho_{\bmz u}^0$ and $\rho_p^0$ are the errors due to the
elliptic projection and $(\bmz u(0) - \bmz u_h^0)$ and
$(p(0) - p_h^0)$ are the errors due to the choice of initial
conditions, either satisfying Stokes equation
\eqref{eqn:biot-mini-init-1} and \eqref{eqn:biot-mini-init-2} or the
simpler
given in \eqref{eqn:biot-mini-ini-3}.

If the initial data satisfies the stabilized Stokes equation
\eqref{eqn:biot-mini-init-1} and \eqref{eqn:biot-mini-init-2}, the
initial errors strongly depend on the regularity of the initial data.
A crucial role is played by the assumptions on the regularity of the
pore pressure $p(0)$.  If we assume $p(0) \in H_0^1(\Omega)$, then the
standard error estimates for the elliptic projection and stabilized
Stokes equation show that the initial data errors are appropriately
bounded, and, hence, we have optimal order of convergence for the
discrete scheme. Therefore, the overall convergence rate of the
stabilized MINI element is optimal.  However, if we assume that $p(0)$
is merely in $L^2(\Omega)$, then we cannot expect that the errors in
the initial data are of optimal order, and, therefore, the overall
convergence rate of the stabilized MINI element is not optimal as
well.

If we just use the simple practical choice
\eqref{eqn:biot-mini-ini-3}, we cannot expect that $\bmz u_h^0$
$p_h^0$ approximate $\bmz u(0)$ and $p(0)$ in general.  Therefore,
regardless of the regularity assumption of the initial data, the
overall convergence rate of the stabilized MINI element will not be as
desired. However, in some cases, even when the initial errors are
large, they decay with respect to time
(see~\cite{MuradLoula94}).  As a consequence, the
discretization error when using stabilized mini element is still
optimal for sufficiently large time (long time).
\end{remark}

\section{Numerical Experiments}\label{sec:numerics}
In this section, we present several numerical experiments in order to
illustrate the performance of the proposed stabilized methods. We will
choose well-known benchmark problems in order to deal with different
aspects as variable permeability, different boundary conditions, the
accuracy of the approximations, etc.

\subsection{Layered porous medium with variable
  permeability} \label{test:Variable_k}

In the first experiment we want to illustrate non-monotone pressure
behavior when we have a low permeability in a sub-domain. We consider
a test proposed in~\cite{NAG:NAG1062} which models a porous
material on which a low--permeable layer ($K = 10^{-8}$) is placed
between two layers with unit permeability ($K = 1$), as shown in
Figure~\ref{problem_variable_k}.
\begin{figure}[htb]
\begin{center}
\includegraphics*[width = 0.4\textwidth]{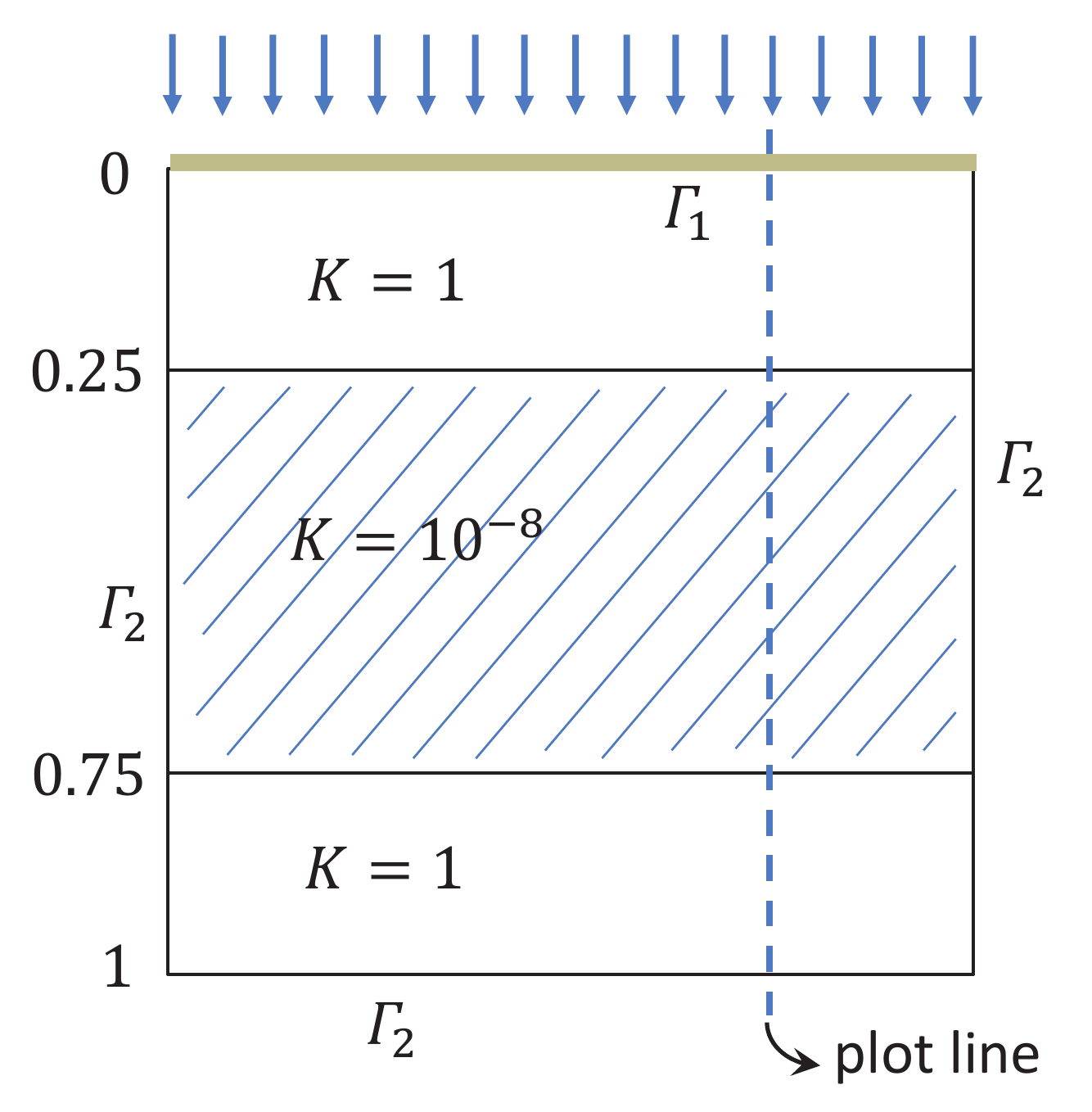}
\caption{Domain representing a square of layered porous material with different permeability.}
\label{problem_variable_k}
\end{center}
\end{figure}
The boundary of the square domain is split in two disjoint subsets
$\Gamma_1$ and $\Gamma_2$ on which we assume the following boundary
conditions: on the top, which is free to drain, a uniform load is
applied, that is,
\begin{equation}\label{bc_gamma1}
p = 0, \qquad \sigma \cdot n = g, \; \hbox{with} \; g = (0,-1)^t, \; \hbox{on} \; \Gamma_1,
\end{equation}
whereas at the sides and bottom that are rigid the boundary is considered to be impermeable , that is,
\begin{equation}\label{bc_gamma2}
\nabla p \cdot n = 0, \qquad u = 0, \; \hbox{on} \; \Gamma_2.
\end{equation}
Zero initial conditions are considered for both variables, and the time step is chosen as $\tau=1$.
Notice that this test can be reduced to a one-dimensional problem. Therefore, in the following simulations we will show the numerical solutions corresponding to one vertical line in the domain as displayed in Figure~\ref{problem_variable_k}. \\
\begin{figure}[htb]
\begin{center}
\begin{tabular}{cc}
\includegraphics*[width = 0.4\textwidth]{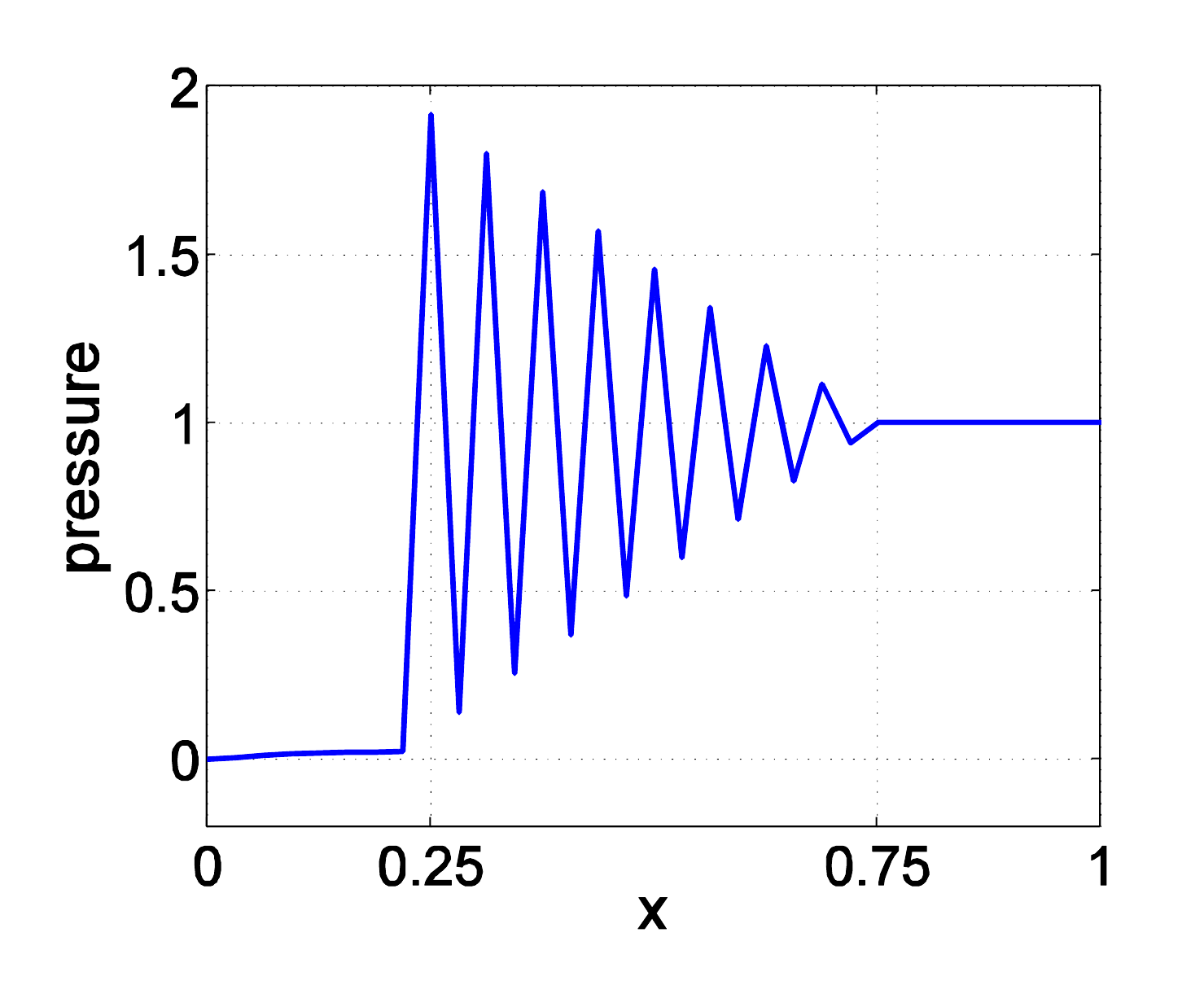}
&
\includegraphics*[width = 0.4\textwidth]{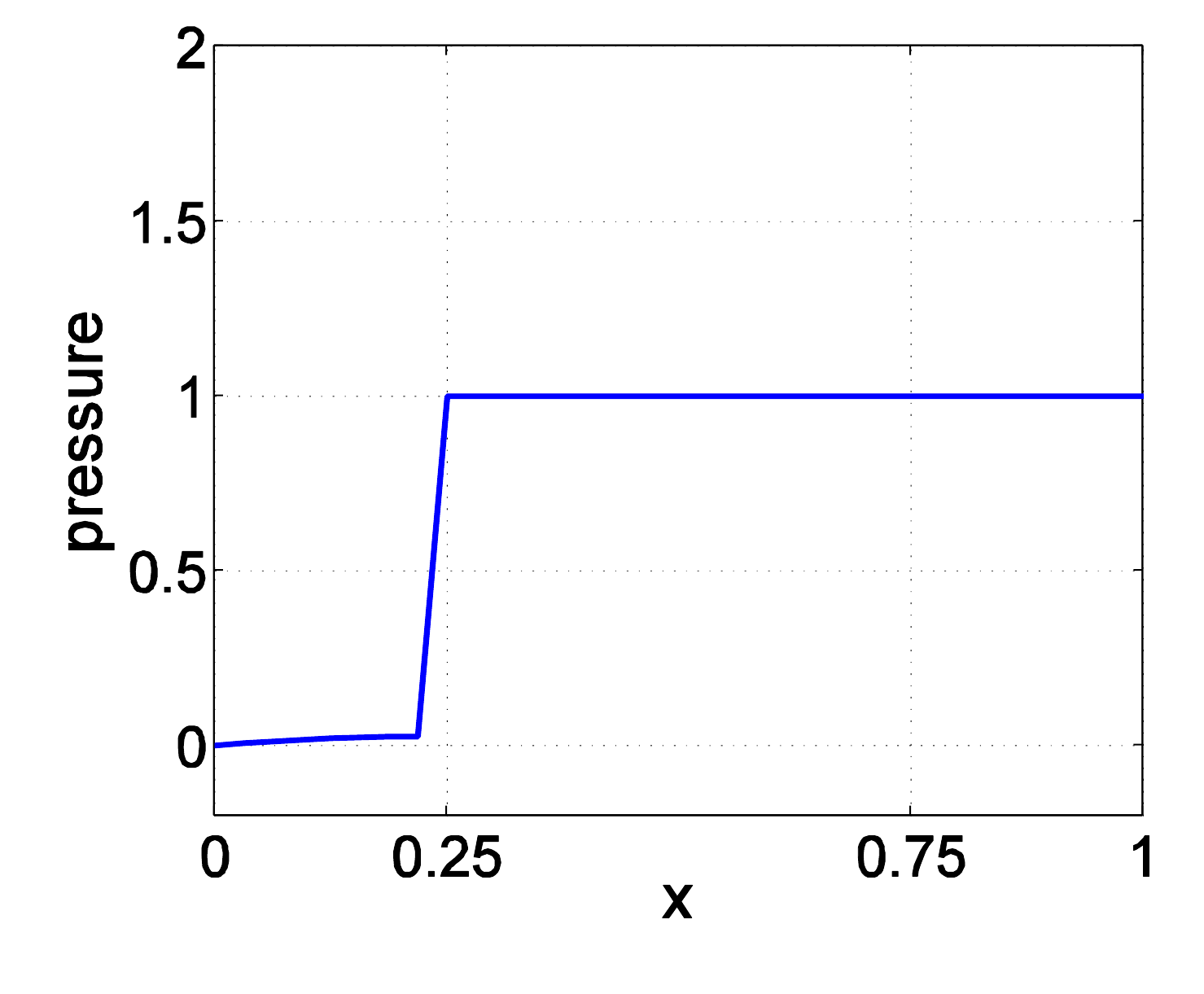}
\\
(a)
&
(b)
\end{tabular}
\caption{Numerical solution by P1--P1 for the pressure to the two-material problem (a) without stabilization term and (b) with stabilization term.}
\label{p1_p1_variable_k}
\end{center}
\end{figure}
First we approximate using linear finite
elements for displacements and pressure. If no stabilization term is
added to the discrete formulation, the approximation for the pressure
field that is obtained by using $32$ elements on the grid is shown in
Figure~\ref{p1_p1_variable_k} (a). We observe that strong spurious
oscillations appear in the part corresponding to the low-permeable
layer. However, if the stabilized scheme is used for the
simulation with the same number of nodes, the oscillations are
completely eliminated and the method gives rise to the monotone
solution for the pressure, as we see in Figure~\ref{p1_p1_variable_k} (b).

 \begin{figure}[htb]
 \begin{center}
 \begin{tabular}{cc}
 \includegraphics*[width = 0.4\textwidth]{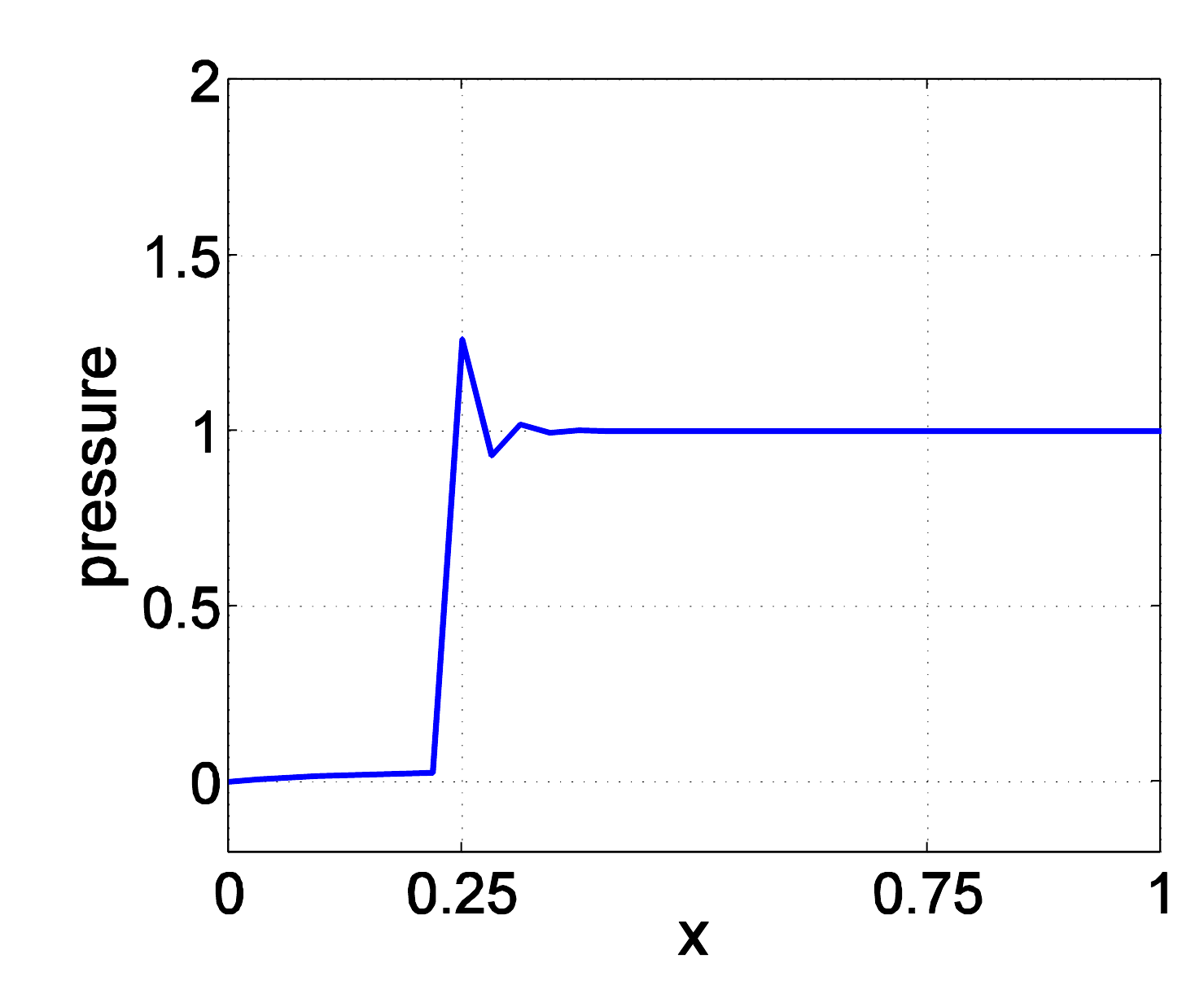}
 &
 \includegraphics*[width = 0.4\textwidth]{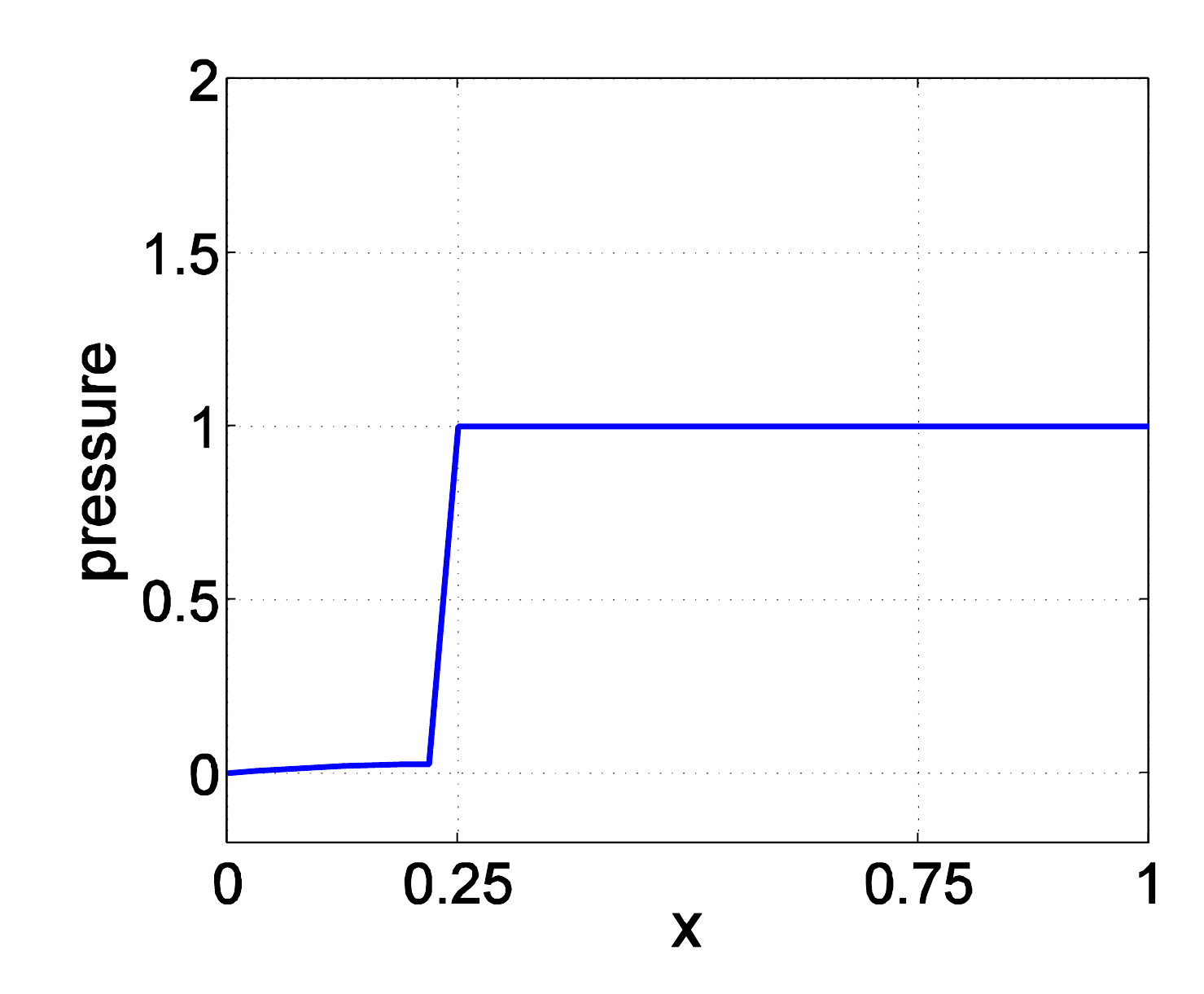}
 \\
 (a)
 &
 (b)
\end{tabular}
\caption{Numerical solution by P2--P1 for the pressure to the two-material problem (a) without stabilization term and (b) with stabilization term.}
\label{p2_p1_variable_k}
\end{center}
\end{figure}

Next, we use approximation by MINI element with the same number of
elements. Similarly to the previous case, when no
stabilization parameter is included in the formulation, the
oscillatory behaviour of the pressure approximation is evident, as shown
in Figure~\ref{p2_p1_variable_k} (a). Notice that the oscillations are
much smaller than in the case of P1--P1 elements, but are still not
eliminated by using this Stokes stable pair of spaces.  Again, a
perturbation stabilizes the method and
we obtain oscillation-free approximation for the pressure field (see Figure~\ref{p2_p1_variable_k} (b)).

\subsection{Mandel's problem}\label{test:Mandel}
Mandel's problem (see~\cite{Mandel}) is an important benchmark
problem because the analytical solution in two dimensions on
a finite domain is known. It is an excellent model that can be used
to verify the accuracy of a discretization. Mandel's problem models an
infinitely long poroelastic slab sandwiched at the top and the
bottom by two rigid frictionless and impermeable plates. The material
is assumed incompressible and saturated with a single-phase
incompressible fluid.
 \begin{figure}[htb]
 \begin{center}
 \includegraphics*[width = 0.4\textwidth]{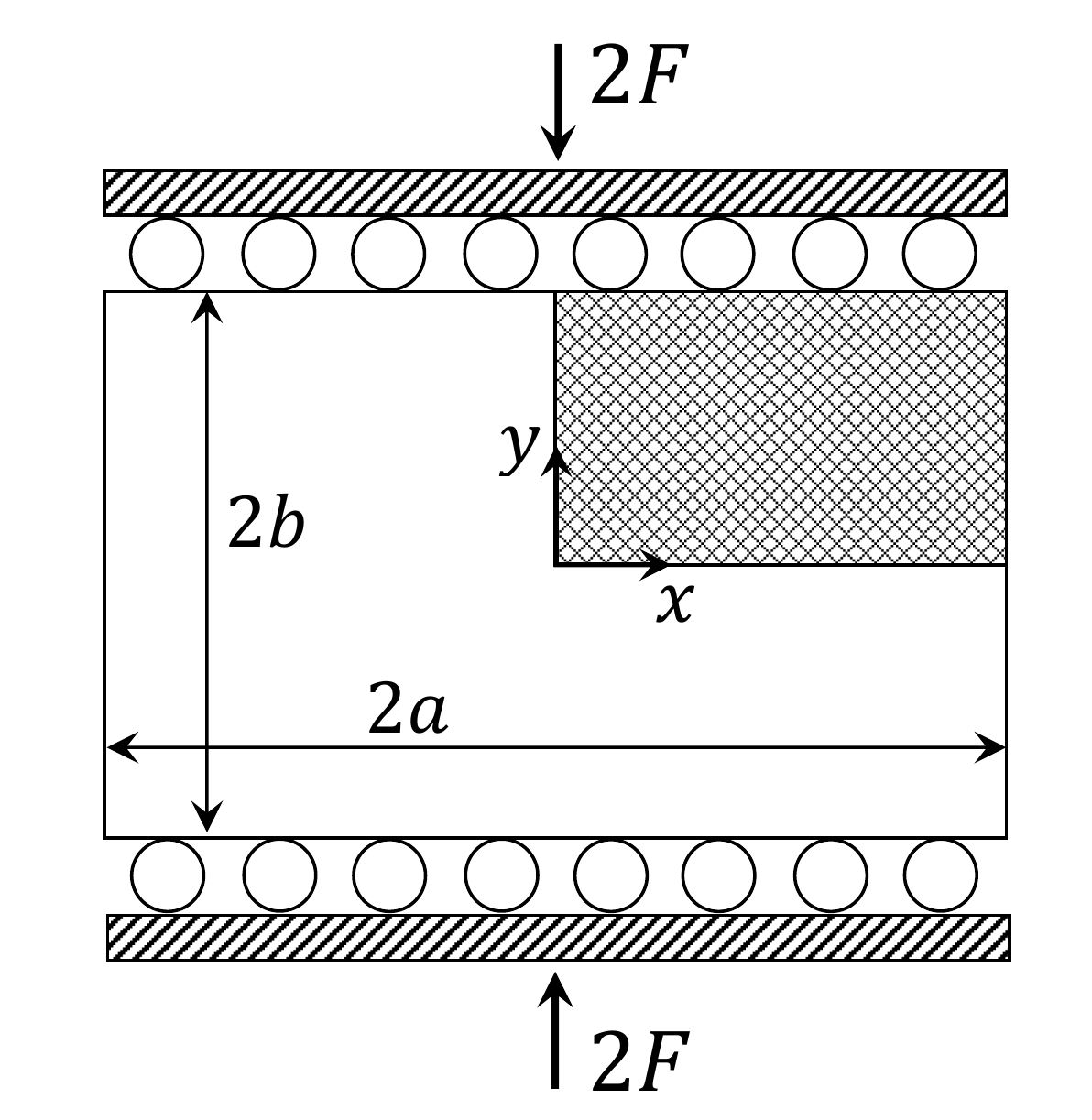}
\caption{2D physical and computational domains for Mandel's problem.}
\label{Mandel_Problem}
\end{center}
\end{figure}
Both plates are loaded by a constant vertical force as shown in
Figure~\ref{Mandel_Problem}, where a $2a\times 2b$ wide cross-section
is displayed. The force of magnitude $2F$ per unit length is suddenly
applied at $t = 0$, generating an instantaneous overpressure by the
Skempton effect~\cite{Skempton}, which will dissipate near the
side edges as time progresses due to the drainage effect, since the
side surfaces ($x = \pm a$) are drained and traction-free.  In this
problem, it turns out that the horizontal displacement $u$ is
independent of the vertical direction $y$, whereas the vertical
displacement $v$ is independent of the horizontal coordinate $x$.  The
analytical solution for the pore pressure can be found in~\cite{abousleiman1996mandel} and is given as follows
\begin{equation}\label{analytical_p}
p(x,y,t) = 2*p_0\sum_{n=1}^{\infty}\frac{\sin \alpha_n}{\alpha_n-\sin \alpha_n \cos \alpha_n}\left(\cos\frac{\alpha_n x}{a}-\cos \alpha_n\right)\exp\left(\frac{-\alpha_n^2 c t}{a^2}\right),
\end{equation}
where $p_0 = \frac{1}{3a}B(1+\nu_u)F$, being $B$ the Skempton's coefficient that for our problem is $B=1$ and $\nu_u = \frac{3\nu+B(1-2\nu)}{3-B(1-2\nu)}$ the undrained Poisson's ratio, $c$ is the consolidation coefficient given by $c = K (\lambda + 2\mu)$, and $\alpha_n$ are the positive roots of the nonlinear equation
$$\displaystyle \tan \alpha_n = \frac{1-\nu}{\nu_u-\nu}\alpha_n.$$
As can be observed in (\ref{analytical_p}), also the pressure is independent of the vertical direction. In fact, Coussy (see~\cite{coussy1995}) shows that the normalized pressure is the solution of the following equation
\begin{equation}\label{eq_only_pressure}
\displaystyle \frac{\partial \hat{p}}{\partial \hat{t}} - \frac{\partial^2 \hat{p}}{\partial \hat{x}^2} = 2 \sum_{n=1}^{\infty} \frac{\alpha_n^2 \sin \alpha_n \cos \alpha_n}{\alpha_n-\sin \alpha_n \cos \alpha_n}\exp(-\alpha_n^2 \hat{t}).
\end{equation}
Note that the right-hand side is constant in space and it can become
large at the beginning of the process.

For the finite element solution, the symmetry of the problem allows us to
choose only a quarter of the physical domain as a computational
domain, as shown in Figure~\ref{Mandel_Problem}. Moreover, the rigid
plate condition is enforced by adding constrained equations such that
vertical displacements on the top are equal to an unknown constant
value. The triangulation of the computational domain is obtained from
a uniform rectangular grid $n_x \times n_y$ by splitting each
element in half.  The dimension of the porous slab is specified by
$a = b = 1,$ and the material properties are given by $K = 10^{-6}$,
$E=10^4$, $\nu = 0$, and therefore $\nu_u = 0.5$. The Lam\`{e}
coefficients are computed in terms of the Young modulus and the
Poisson ratio as follows,
\begin{equation*}
\lambda = \displaystyle\frac{E\nu}{(1-2\nu)(1+\nu)},\quad \mu = \displaystyle\frac{E}{2(1+\nu)}.
\end{equation*}
Finally, the applied force has a magnitude of $F = 1 \,M\,Pa\,m$.

\noindent
The first test with Mandel's problem will illustrate the
need of stabilizing the P1-P1 discretization, as well as the MINI
element discretization,
in order to remove the spurious oscillations in the pressure field.
We choose a final time $T= 10^{-4}$
for the computations with only one time-step, and a spatial grid with
$n_x = n_y =32$. Since the pressure unknown is independent of the
vertical coordinate, we will present the results on a representative
horizontal line.
 \begin{figure}[htb]
 \begin{center}
 \begin{tabular}{cc}
 \includegraphics*[width = 0.4\textwidth]{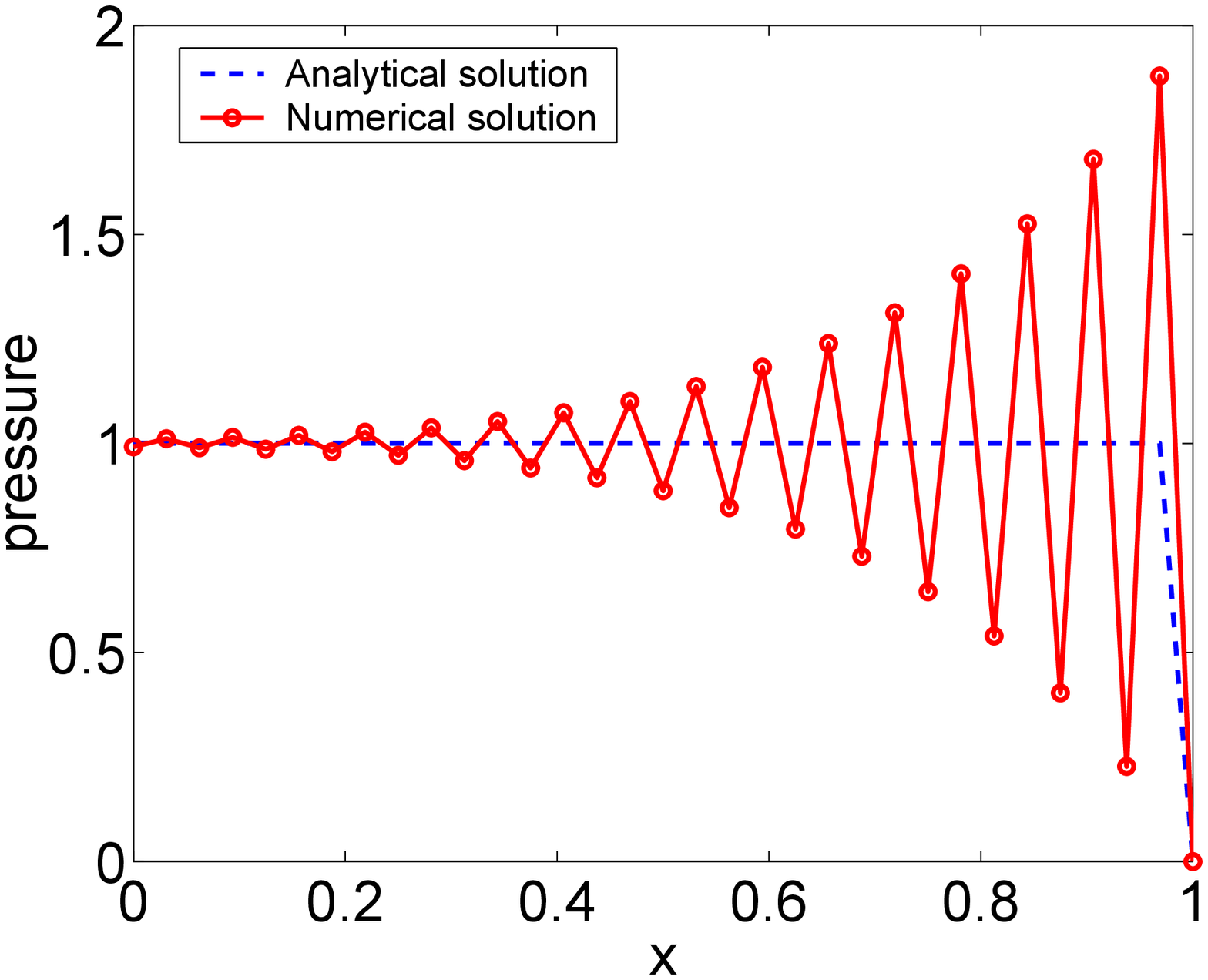}
 &
 \includegraphics*[width = 0.4\textwidth]{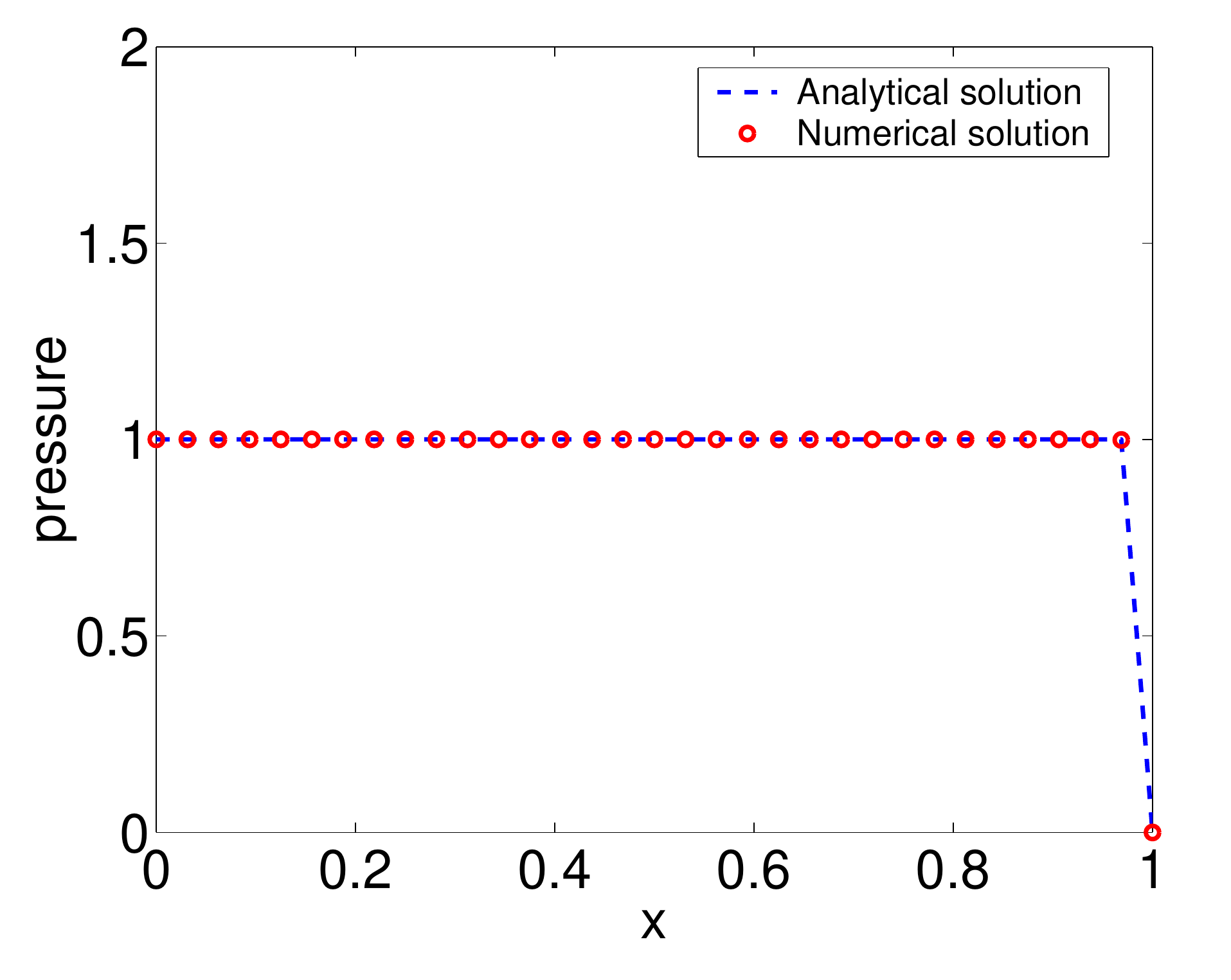}
 \\
 (a)
 &
 (b)
\end{tabular}
\caption{Numerical solution by P1-P1 of the pressure for Mandel's problem (a) without stabilization term and (b) with stabilization term.}
\label{p1_p1_Mandel}
\end{center}
\end{figure}
In Figures~\ref{p1_p1_Mandel} (a) and~\ref{p1_p1_Mandel} (b), we show
the numerical solution for the pressure (plotted in circular symbols)
obtained by using P1--P1 finite element methods without and with
stabilization, respectively. The numerical solution is plotted against
the analytical solution that is displayed by a dashed line. The same
comparison is shown in Figures~\ref{MINI_Mandel} (a)
and~\ref{MINI_Mandel} (b) for the MINI element scheme. For the latter,
the inf-sup condition is satisfied, but we observe that nonphysical
oscillations appear in the pressure field, albeit smaller than in the
P1--P1 case. By adding in both methods stabilization terms,
oscillation-free solutions are obtained, as seen in
Figures~\ref{p1_p1_Mandel}(b) and~\ref{MINI_Mandel}(b).
 \begin{figure}[htb]
 \begin{center}
 \begin{tabular}{cc}
 \includegraphics*[width = 0.4\textwidth]{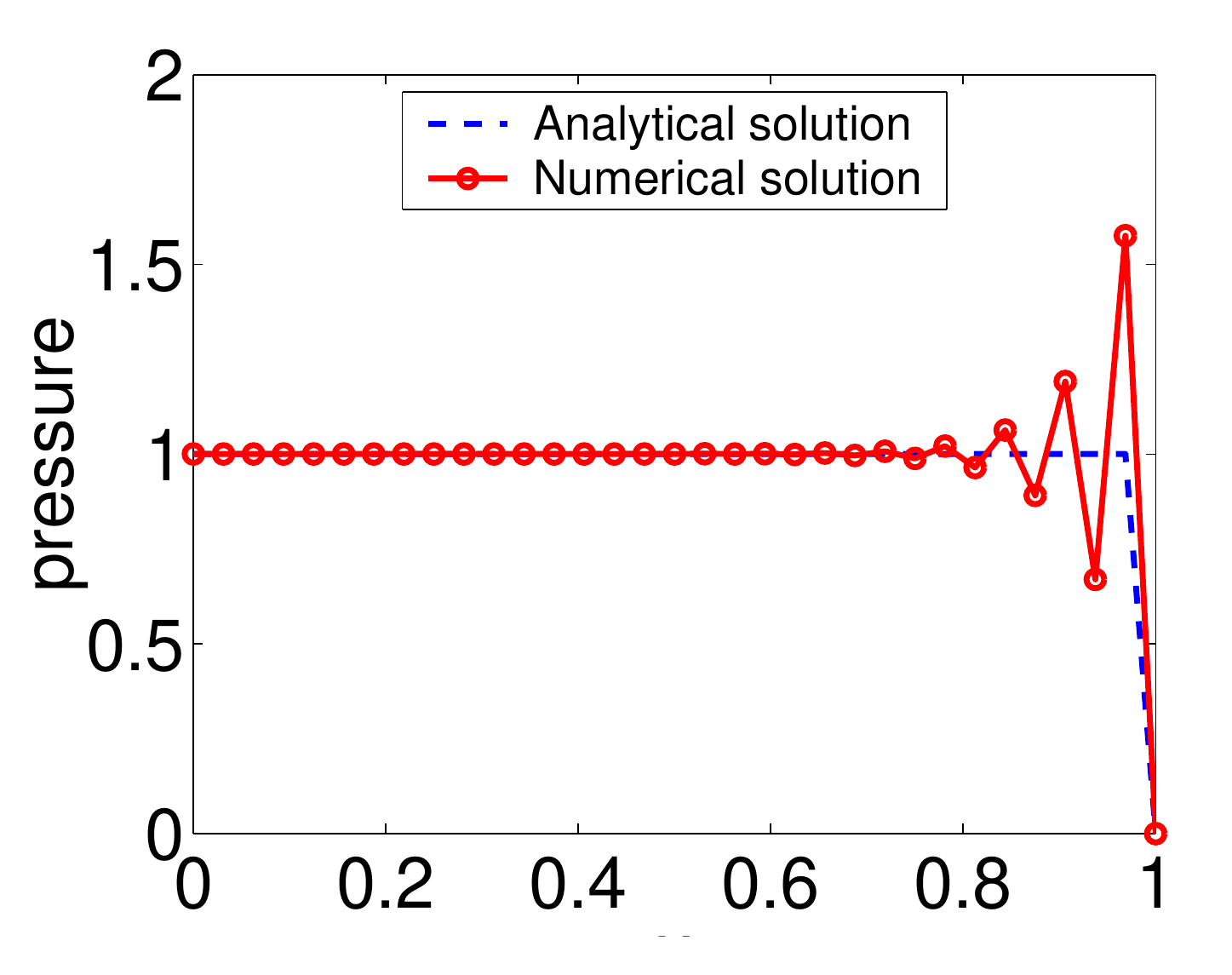}
 &
 \includegraphics*[width = 0.4\textwidth]{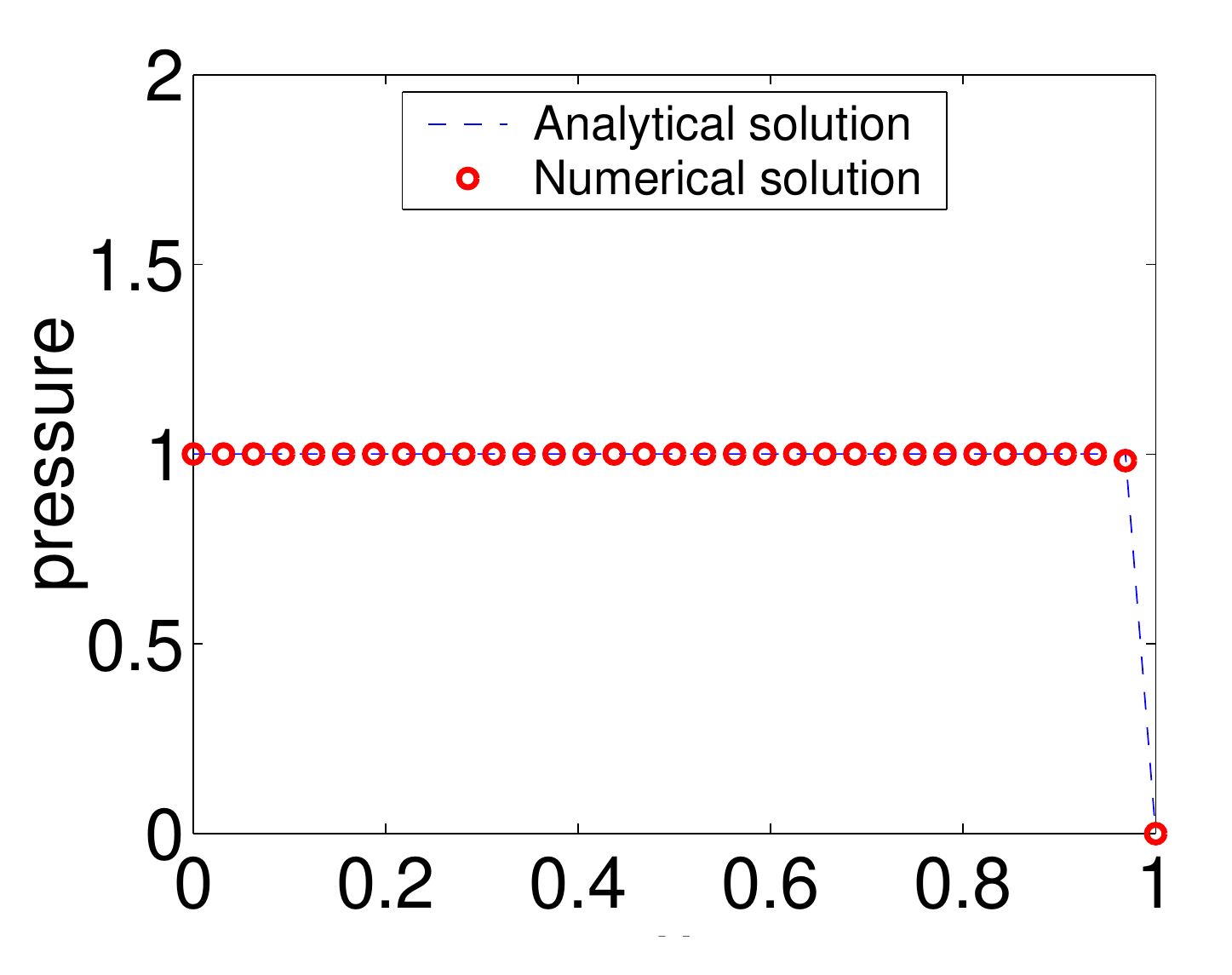}
 \\
 (a)
 &
 (b)
\end{tabular}
\caption{Numerical solution by MINI element of the pressure for Mandel's problem (a) without stabilization term and (b) with stabilization term.}
\label{MINI_Mandel}
\end{center}
\end{figure}
Next, we analyze the behavior of the pressure in different times. For
this purpose, in Figure~\ref{Mandel_times} the solution of the
pressure obtained by stabilized P1--P1 finite elements on a grid with
$n_x = n_y = 32,$ together with the corresponding analytical solution
are shown in different times. We can observe a good agreement between
both solutions for all the cases. A very interesting behavior of the
solution of Mandel's problem is that it can achieve values greater
than one at some time instants. In the literature, this is known as
the Mandel-Cryer effect and usually is associated to a lack of
monotonicity. However, it is clear that this phenomenon is due to the
source term that appears in equation~(\ref{eq_only_pressure}), and is
fully in agreement with the maximum principle for the heat equation.
\begin{figure}[htb]
\begin{center}
\includegraphics*[width = 0.5\textwidth]{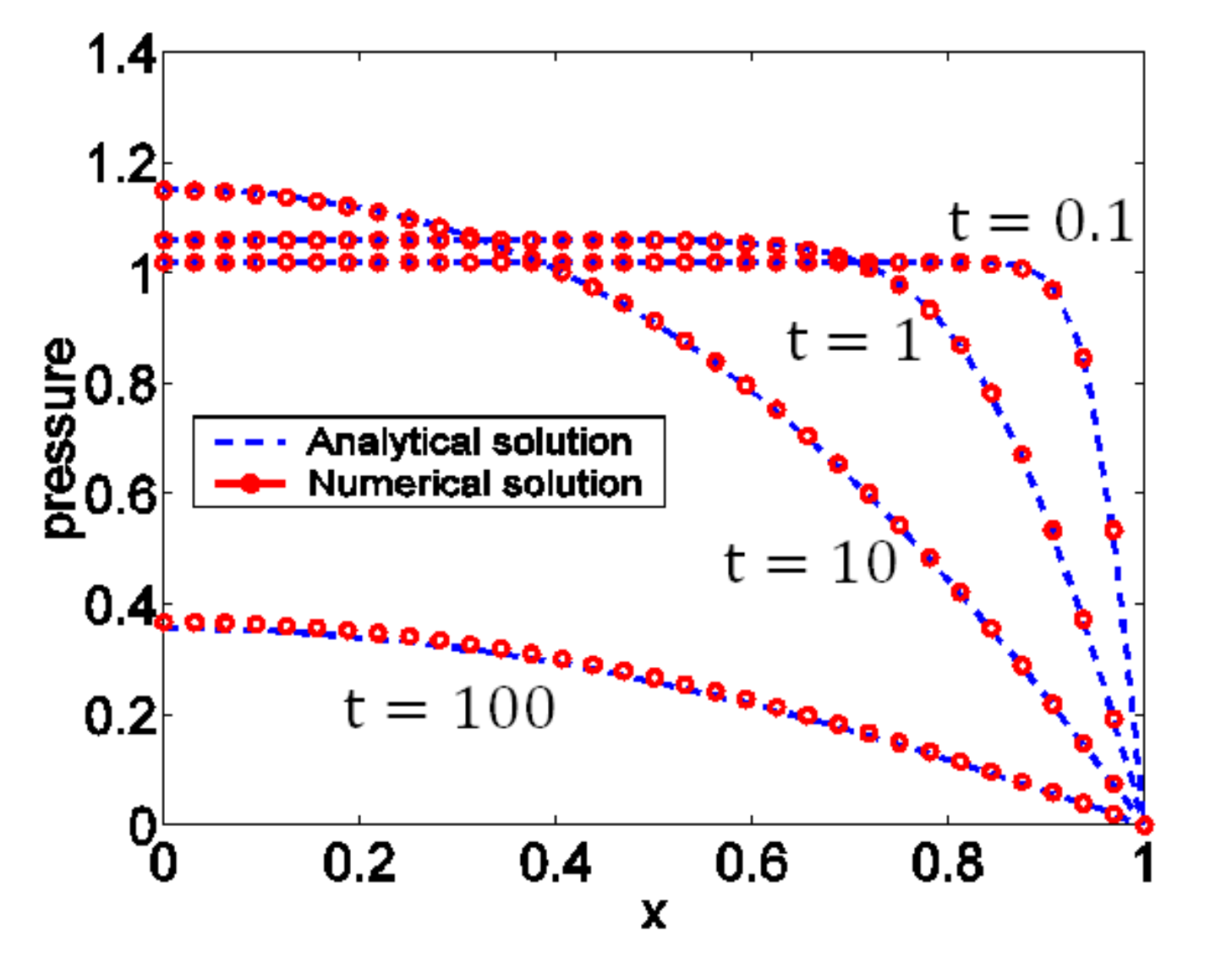}
\caption{Comparison of numerical and analytical solutions of the pore pressure for Mandel's problem at various times.}
\label{Mandel_times}
\end{center}
\end{figure}

Finally, we investigate the convergence properties of the proposed
stabilized schemes by comparing the analytical solution, given
in~\eqref{analytical_p}, with the numerical solution obtained on
progressively refined computational grids with $n_x = n_y$ ranging
from $10$ to $80$ and with time-steps ($\tau = T/n_t$) from $0.5$ to
$0.0625$. In Table~\ref{Mandel_error_p}, for each mesh and a final
time of $T = 1$, we display the error for the pressure in the norm
\begin{equation*}
|| p(t_n) - p_h^n||^2 =  || p(t_n) - p_h^n||^2 + K\tau|| \nabla(p(t_n) - p_h^n)||^2.
\end{equation*}
\begin{table}[!htb]
{\begin{tabular}{ccccc} \hline
$n_x \times n_y \times n_t$ & $10\times10\times 2$ & $20\times20\times 4$ & $40\times40\times 8$ & $80\times80\times 16$ \\ \hline
P1--P1 & 0.0163 & 0.0110 & 0.0058 & 0.0029 \\
MINI & 0.0162 & 0.0110 & 0.0058 & 0.0030 \\ \hline
\end{tabular}}
\caption{Energy norm of the error for the pore pressure by using \\ stabilized P1--P1 and MINI element for different spatial-temporal grids. } \label{Mandel_error_p}
\end{table}
From Table~\ref{Mandel_error_p} we observe first order convergence,
according to the error estimate obtained in
Theorem~\ref{thm:error-mini}.  A very interesting insight rising from
these results is that similar errors for both finite element methods
are obtained. This is due to the fact that very similar stabilization
parameters have to be added to both methods to avoid the nonphysical
oscillations, since the addition of the bubble plays a positive role
but with a very small contribution. This point could be a reason to
support the use of the stabilized P1--P1 scheme against the MINI
element that also has to be stabilized.

\subsection{Barry \& Mercer's problem}\label{test:BarryMercer}

Another well-known benchmark problem on a finite
two-dimensional domain is Barry \& Mercer's model, see~\cite{BarryMercer}. It models the behavior of a rectangular uniform porous
material with a pulsating point
source, drained on all sides, and on which zero tangential displacements are
assumed on the whole boundary. 
\begin{figure}[!htb]
\begin{center}
\includegraphics*[width = 0.55\textwidth]{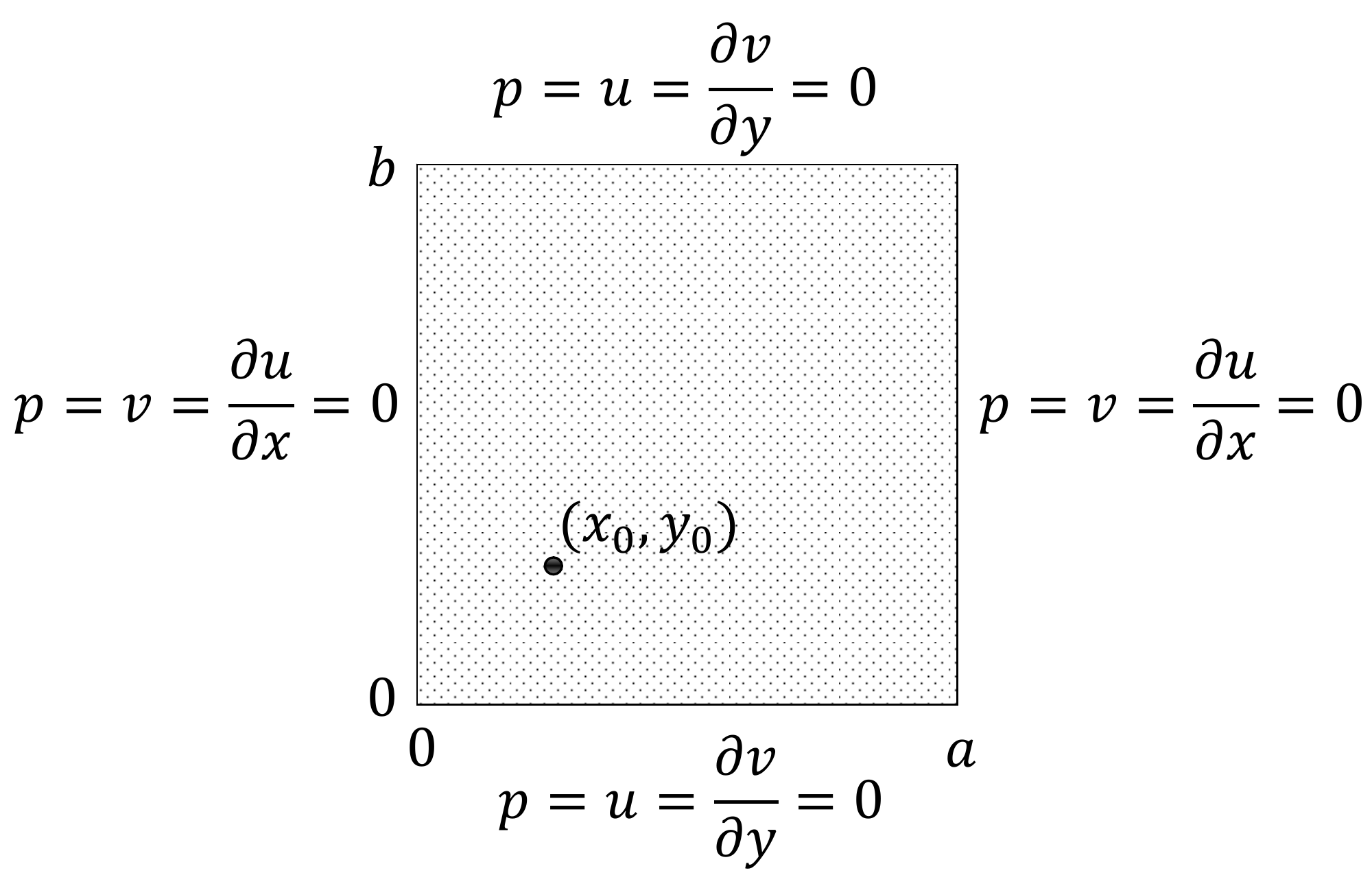}
\caption{Computational domain and boundary conditions for the Barry and Mercer's source problem.}
\label{BarryMercer}
\end{center}
\end{figure}
The point-source corresponds to a sine
wave on the rectangular domain $[0,a]\times[0,b]$ and is given as follows
\begin{equation}\label{BM_source}
f(t) = 2\beta \,\delta_{(x_0,y_0)}\sin(\beta \,t),
\end{equation}
where $\beta = \displaystyle\frac{(\lambda + 2\mu)K}{a\,b}$ and $\delta_{(x_0,y_0)}$ is the Dirac delta at the point $(x_0,y_0)$.
In Figure~\ref{BarryMercer} the computational domain together with the boundary conditions are depicted.
The boundary conditions do not correspond to a realistic physical
situation, but they admit an analytical solution making this model a
suitable test for numerical codes.
Here we use this model to assess the monotone behavior of the
approximations of the pressure.
\begin{figure}[!htb]
\centering
\subfloat[$T = \pi/2$]{
\includegraphics*[width = 0.4\textwidth]{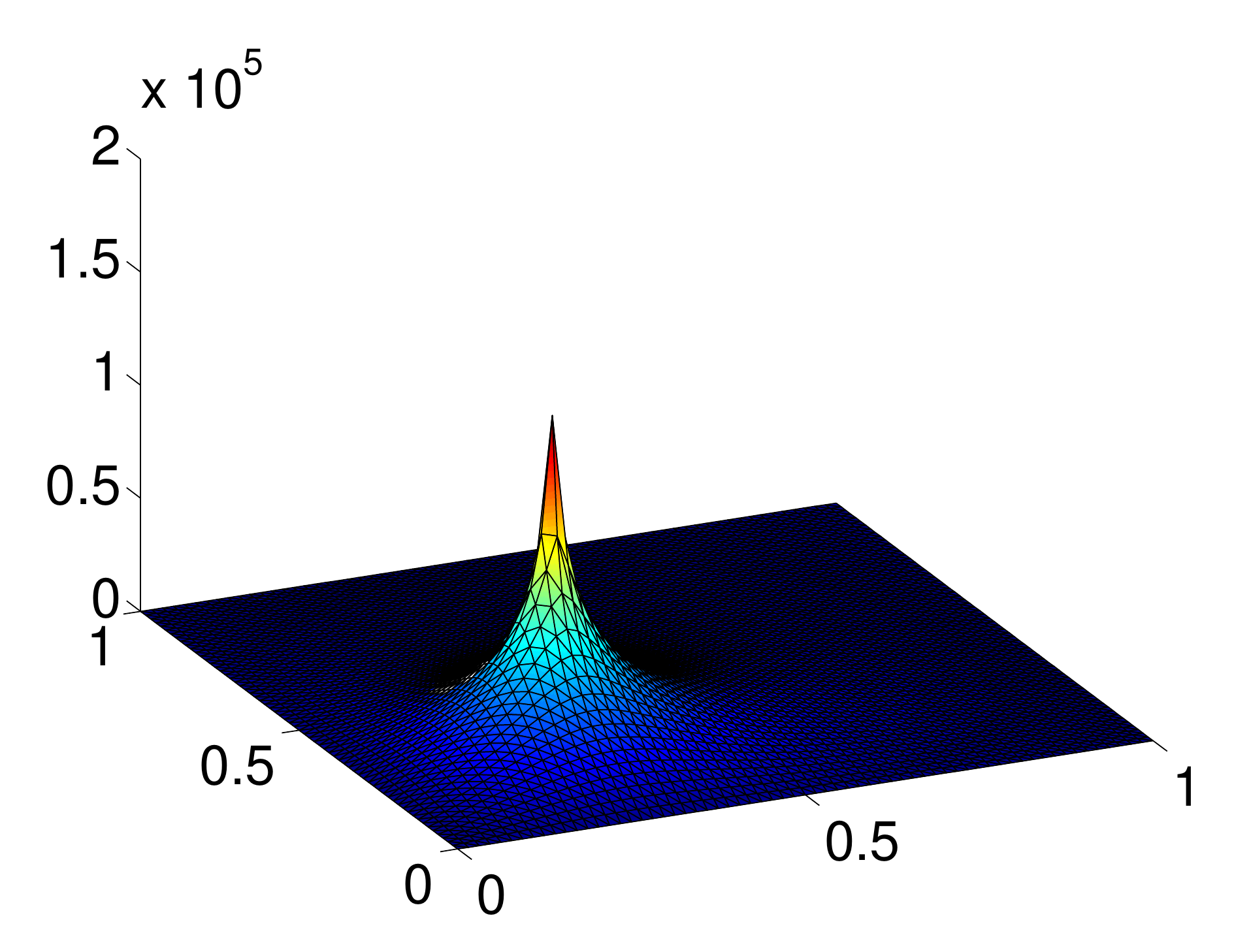}
\hfill
\includegraphics*[width = 0.35\textwidth]{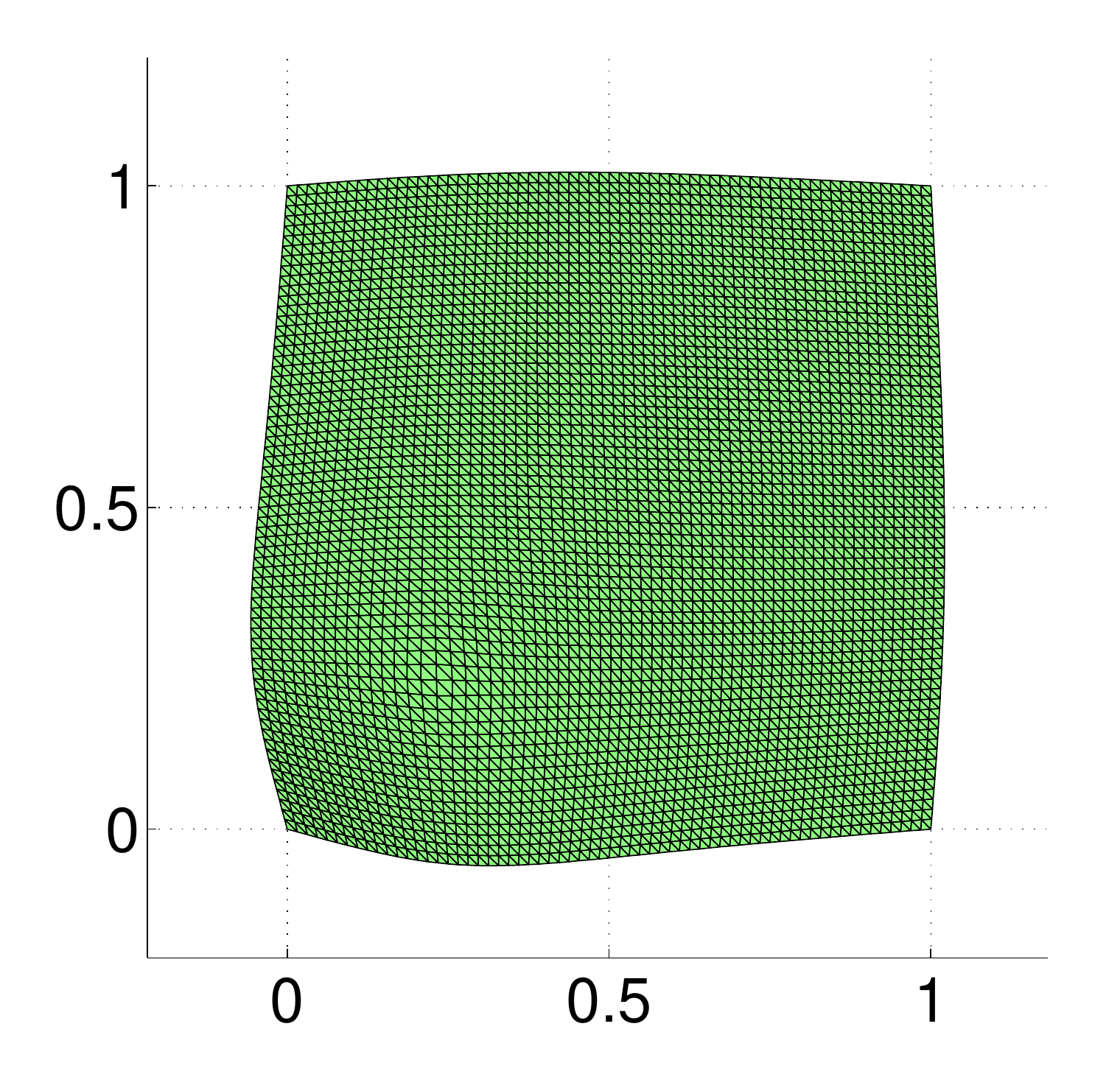}
}
\\
\subfloat[$T=3\pi/2$]
{
\includegraphics*[width = 0.4\textwidth]{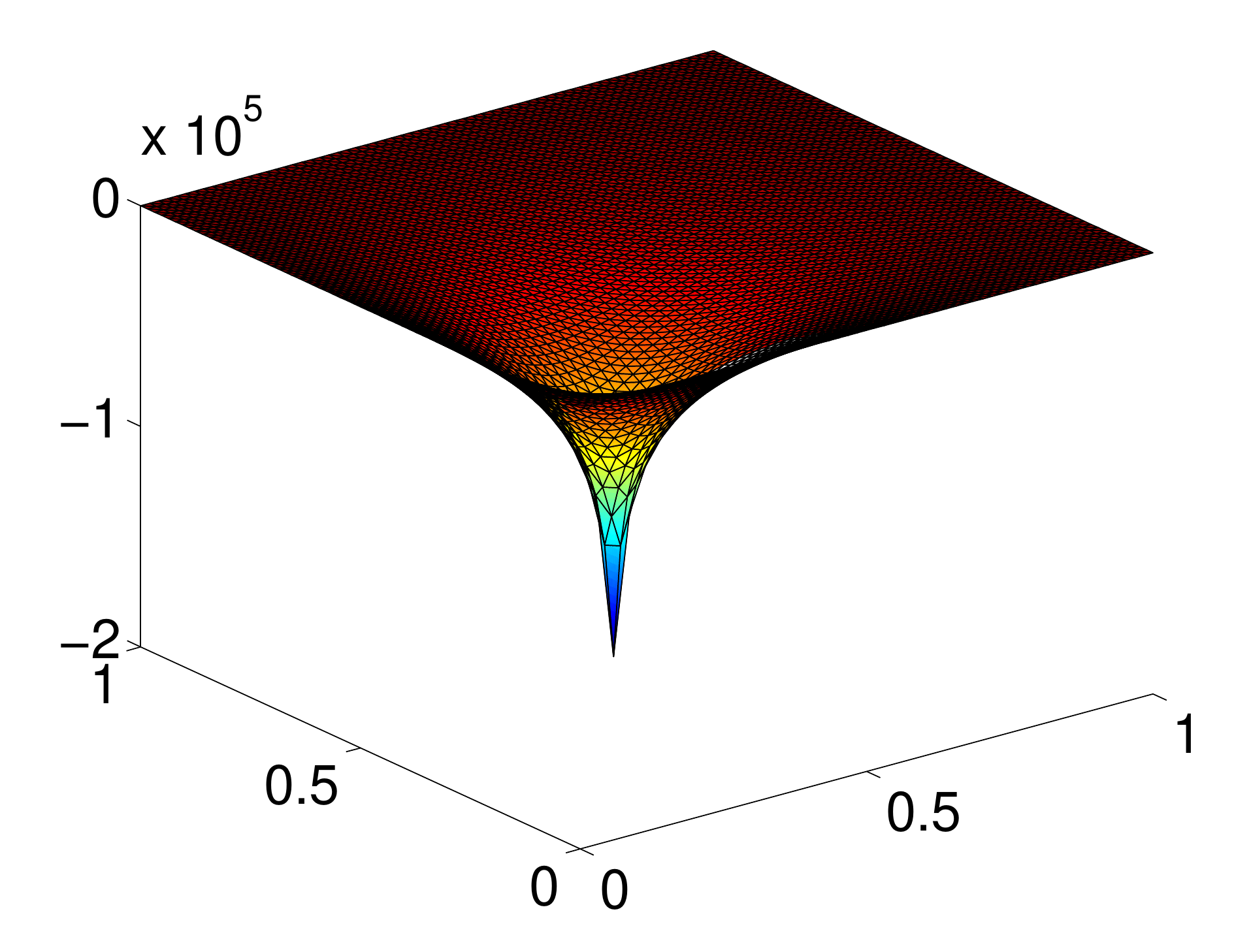}
\hfill
\includegraphics*[width = 0.35\textwidth]{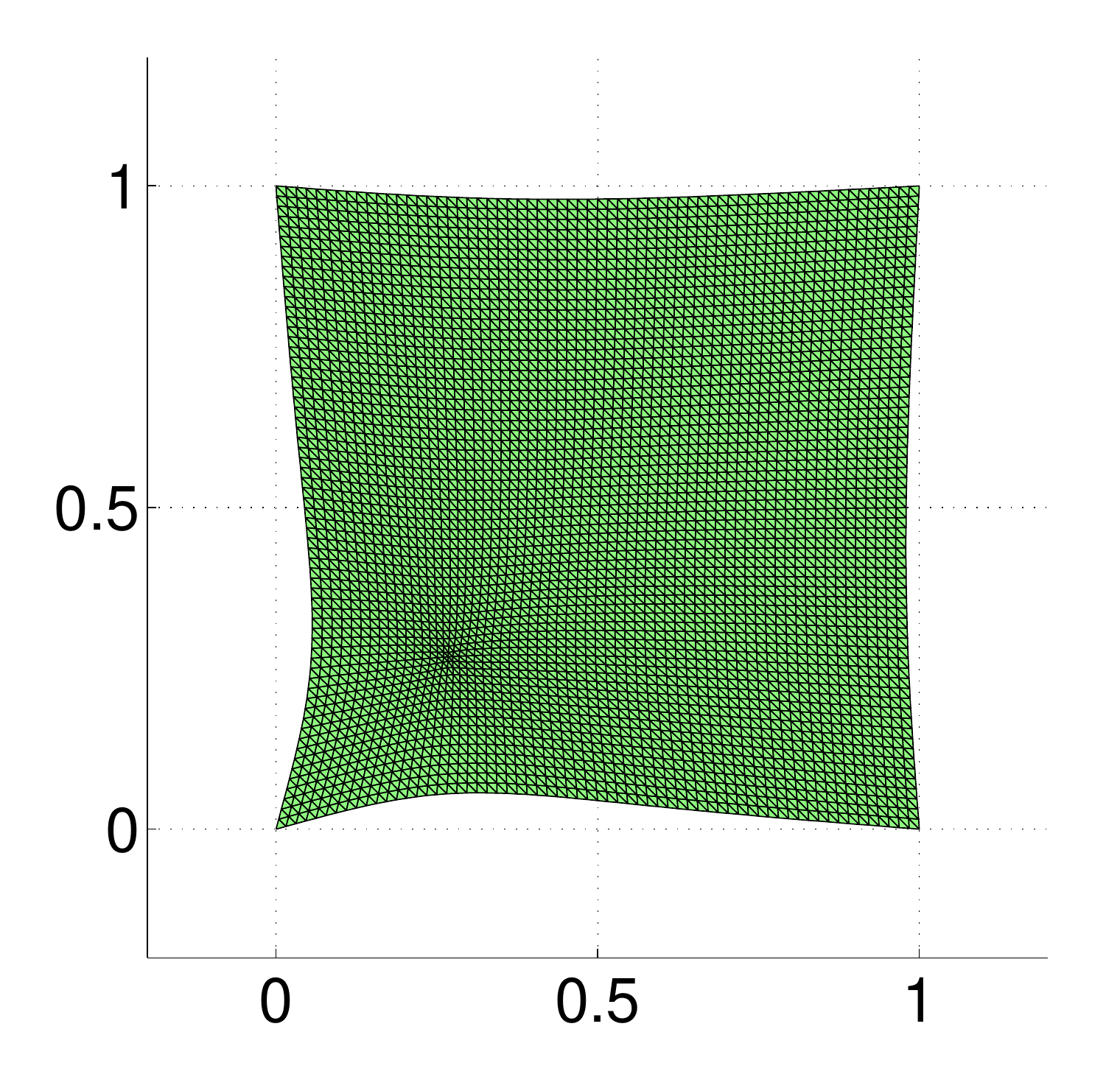}
}
\caption{Numerical solution for the pressure by P1-P1 and deformation of the grid after applying the pulsating pressure point source, for two different values of $T$.}\label{BM_P1_P1}
\end{figure}

We consider the rectangular domain $(0,1)\times(0,1)$, and
the following values of the material parameters are considered
$E = 10^5$, $\nu = 0.1$ and $K = 10^{-2}$.  The source is positioned
at the point $(1/4,1/4)$ and a right triangular grid with
$n_x = n_y = 64$ is used for the simulations. The solution for the
pressure produced by the stabilized P1-P1 scheme is plotted in
Figure~\ref{BM_P1_P1} for two different ``normalized times''
$\hat{t} = \beta \, t$ of values $\hat{t} = \pi/2$ and
$\hat{t} = 3\pi/2$. Also we display the
deformation of the considered triangular grid, according to the
results obtained for the displacements. We can observe that depending
on the sign of the source term (positive for $\hat{t} = \pi/2$ and
negative for $\hat{t} = 3\pi/2$) the resultant displacements cause an
expansion or a contraction of the medium.

The analytical solution of this problem is given by an infinite
series, and can be found in~\cite{BarryMercer}. It has been
observed that solutions displayed in Figure~\ref{BM_P1_P1} resemble
the exact solution very precisely.

Fluid pressure oscillations for the Barry and Mercer's problem can be
demonstrated by considering the standard schemes given by a P1-P1 or
MINI element discretizations. In order to see this characteristic
non-physical oscillatory behavior, a small permeability and/or a short
time intervals are considered. Therefore, in the previous test,
we have changed the value of $K$ to $10^{-6}$ and $T$ to
$10^{-4}$. For these parameters, in Figure~\ref{BM_oscillations} we
show the numerical solutions obtained for the pressure field, by using
P1-P1 scheme (on the top) and the MINI element (on the bottom). We can
observe that if no stabilization term is added to any of the discrete
schemes (left pictures), then non-physical oscillations appear in the
surroundings of the source-point. However, by adding the proposed
artificial stabilizations, we can see (right pictures) that these
oscillations are completely eliminated.
\begin{figure}[htb]
\begin{center}
\begin{tabular}{cc}
\includegraphics*[width = 0.35\textwidth]{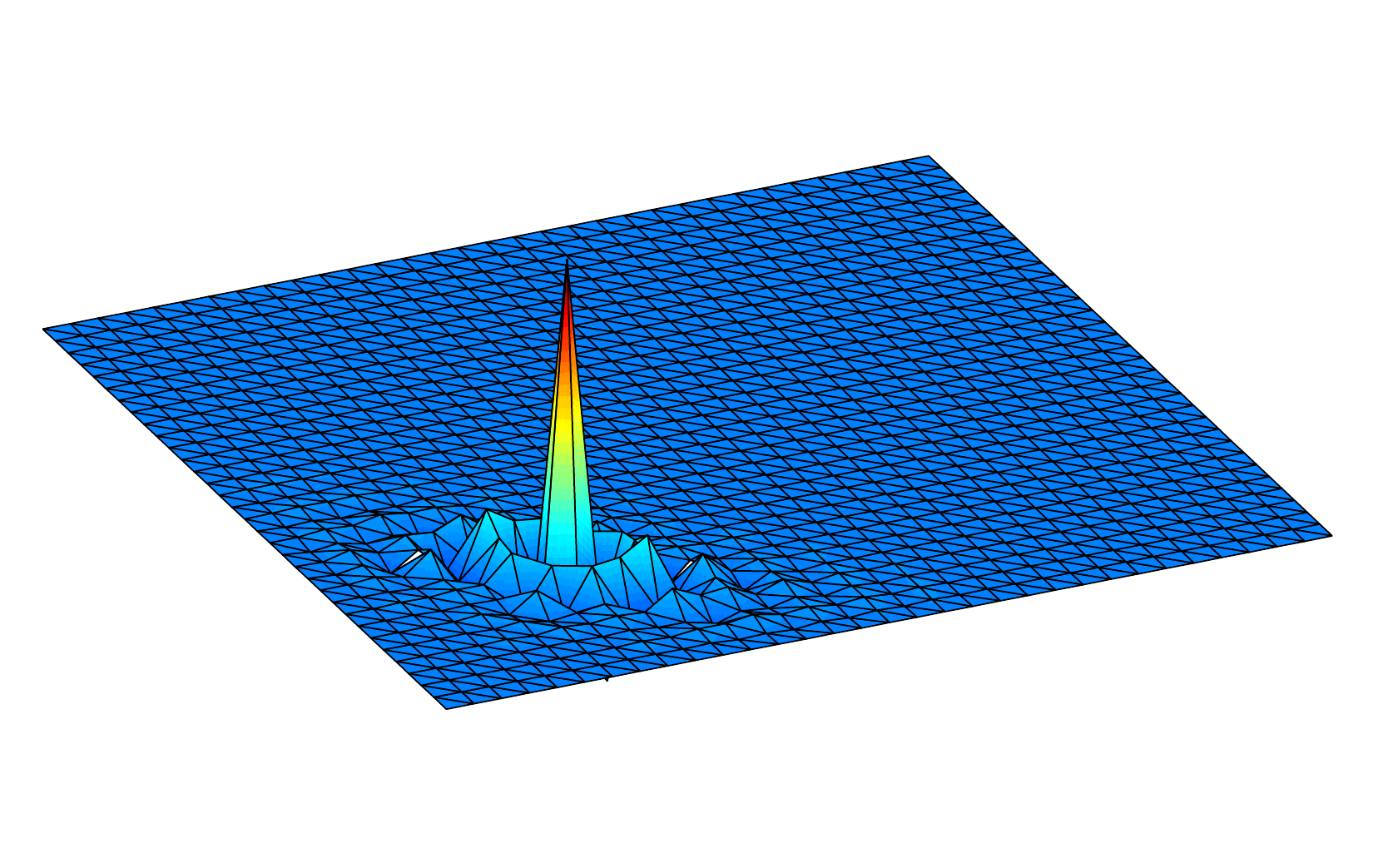}
&
\includegraphics*[width = 0.35\textwidth]{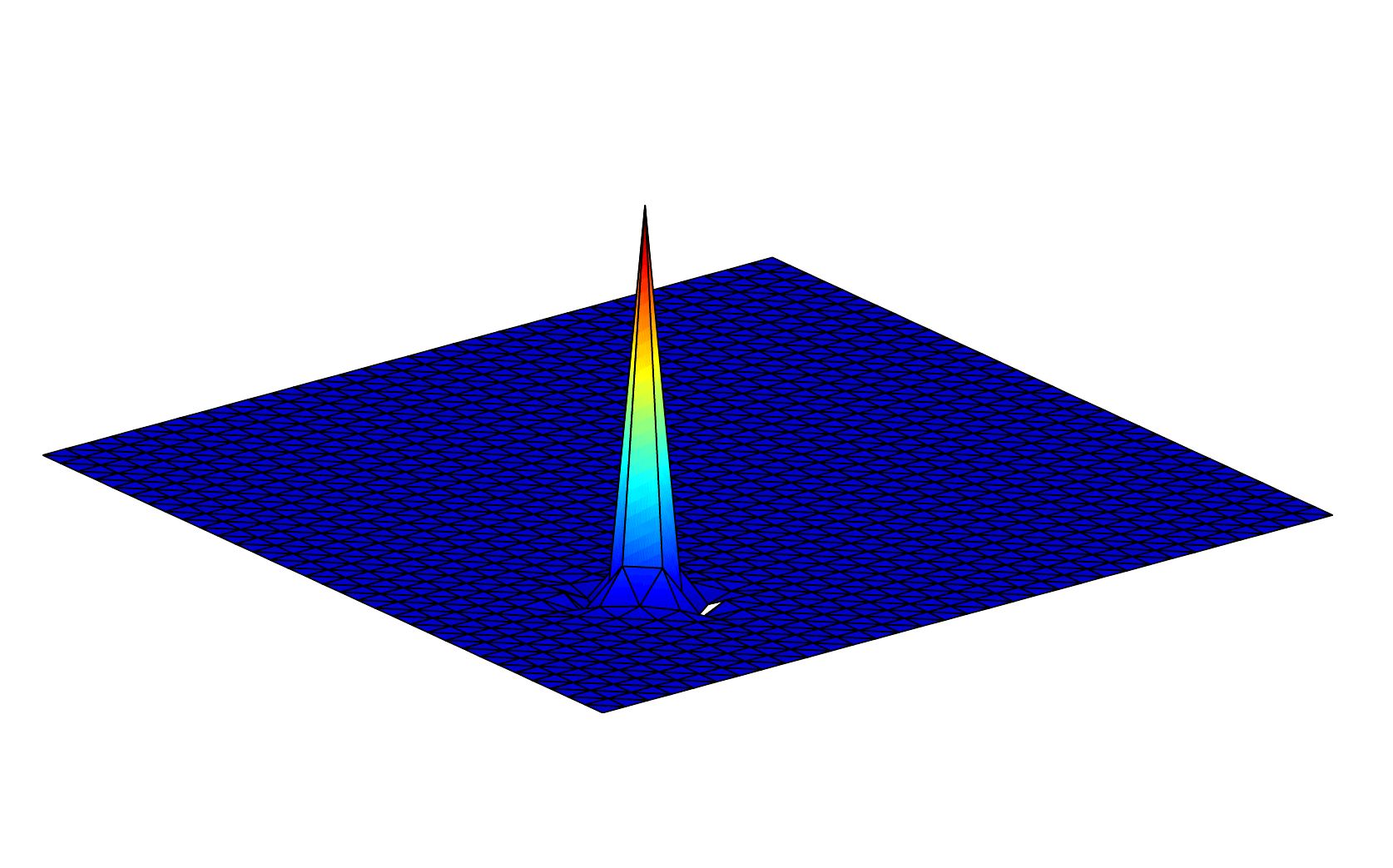}
\\
\includegraphics*[width = 0.35\textwidth]{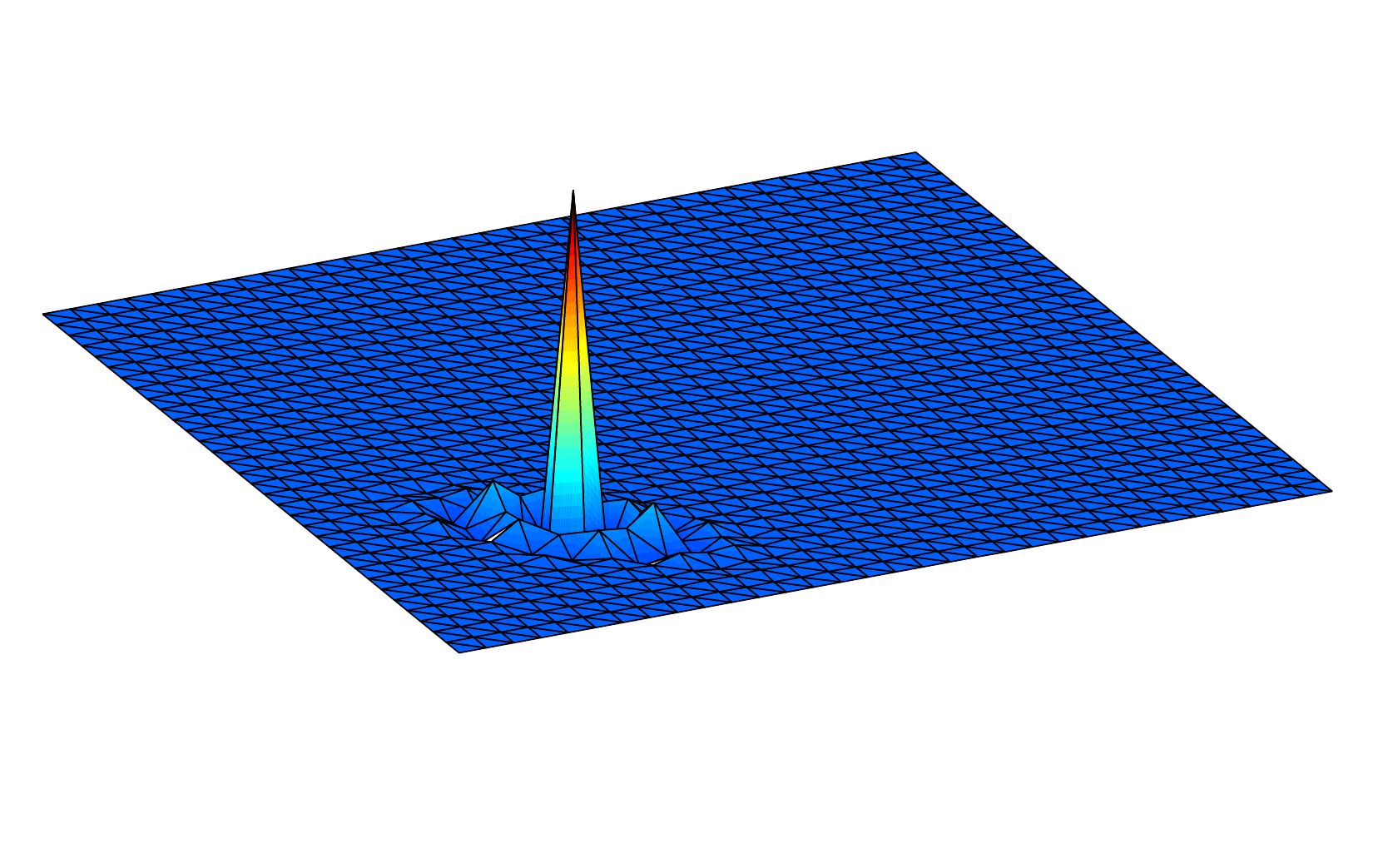}
&
\includegraphics*[width = 0.35\textwidth]{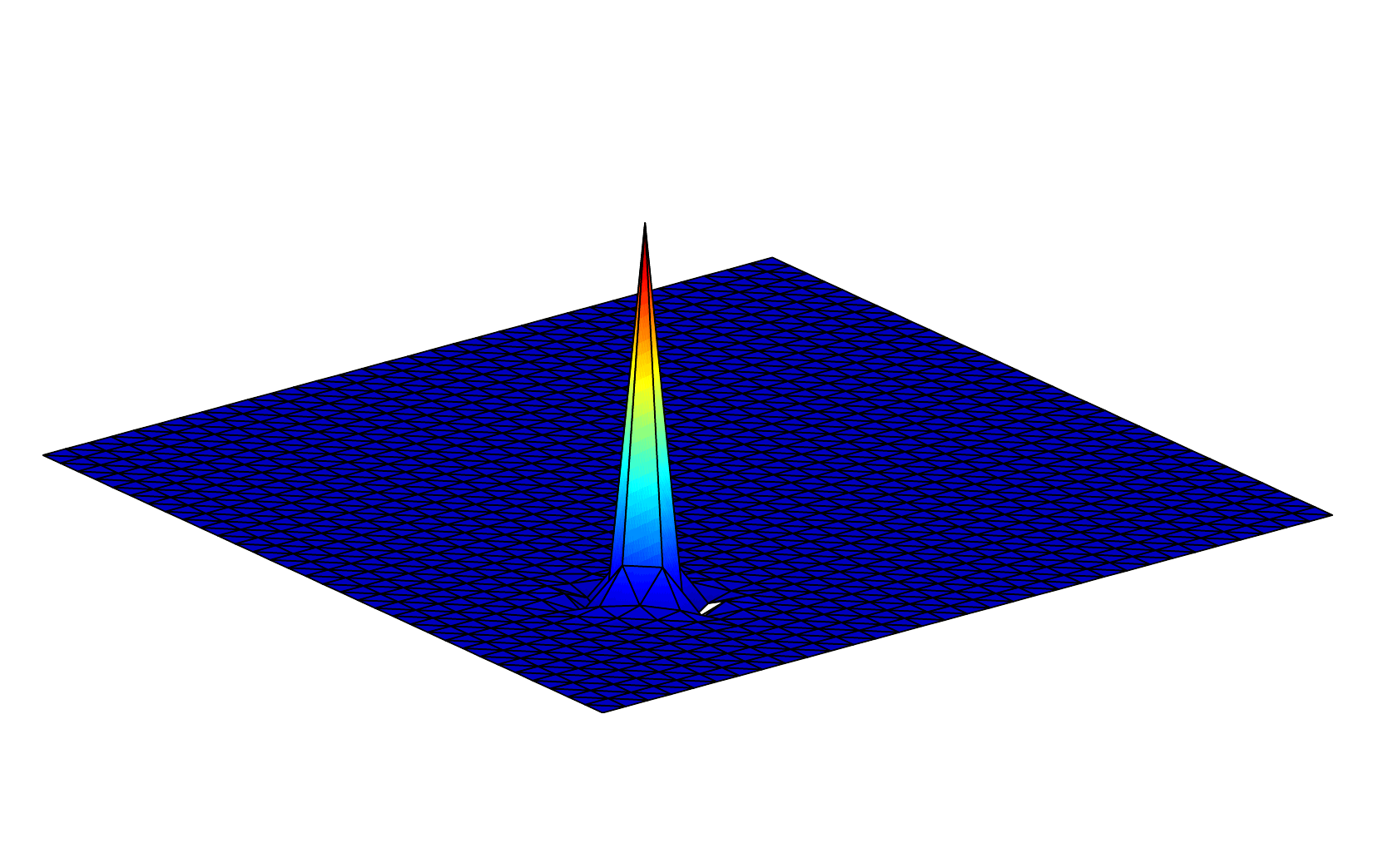}
\end{tabular}
\caption{Numerical solution for the pressure field by P1-P1 (top) and MINI element (bottom), without and with stabilization term, at a final time of $10^{-4}$ and a permeability of $K = 10^{-6}$.}
\label{BM_oscillations}
\end{center}
\end{figure}

\section{Conclusions}\label{sec:conclusions}
In this paper we have analyzed the convergence and the monotonicity
properties of low order discretizations of the Biot's consolidation
model in poromechanics. While the convergence results are complete in
some sense, there are still several open theoretical questions
regarding the monotonicity of the resulting discretizations. Clearly,
our numerical results show that choosing the stabilization parameters
correctly lead to oscillation-free solutions, but justifying this
rigorously is difficult and a topic of ongoing research. We have to
say though that as a rule of thumb, one can choose stabilizations that
are optimal in 1D, and, the resulting approximations in higher spatial
dimensions will be oscillation-free.

\section*{Acknowledgements}
The work of Francisco J. Gaspar and Carmen Rodrigo is supported in
part by the Spanish project FEDER /MCYT MTM2013-40842-P and the DGA
(Grupo consolidado PDIE). The research of Ludmil Zikatanov is supported in
part by NSF DMS-1217142 and NSF DMS-1418843. Ludmil Zikatanov gratefully
acknowledges the support for this work from the Institute of Mathematics
and Applications at University of Zaragoza and Campus Iberus, Spain.



\appendix

\section{Local elimination of bubbles}\label{sect:local-Schur}

In this appendix we compute the contribution of bubble stabilization
in the MINI element.  We show that $G_b^T A_{b}^{-1} G_b$ is
spectrally equivalent to the stiffness matrix corresponding to the
discretization of the Laplace with continuous piece-wise linear finite
elements.

To begin, we fix $T\in \mathcal{T}_h$ and we prove several simple
identities. When the dependence on $T$ neds to be emphasized we
indicate this by indexing the corresponding quantities with $T$, but
most of the time, this is not needed and we set
\begin{eqnarray*}
&&\lambda_k=\lambda_{k,T},\quad k=1,\ldots, (d+1), \\
&&\alpha=\alpha_T, \quad\mbox{and}\quad
\varphi=\varphi_{b,T} = \alpha \lambda_1\ldots\lambda_{d+1}.
\end{eqnarray*}
Here $\lambda_{k,T}(x)$ are the standard barycentric coordinates on
$T$ and $\alpha_T$ is a constant chosen so that $\varphi_{b,T}$ has a
value $1$ at the barycenter of $T$. To integrate polynomials over a
$d$-dimensional simplex $T$ we use the well known formula for
integrating powers of the barycentric coordinates~(see~\cite{1978CiarletP-aa}):
\begin{equation}\label{int-on-T}
\int_T \lambda_1^{\beta_1}\ldots\lambda_{d+1}^{\beta_{d+1}}\;dx =
|T|\frac{\beta_{1}!\ldots\beta_{d+1}! d! }{(\beta_{1}+\ldots+\beta_{d+1}+d)!}.
\end{equation}
Further, we introduce the matrix $\Lambda \in \mathbb{R}^{d\times(d+1)}$ whose columns are the
appropriately scaled gradients of
$\lambda_k$, $k=1,\ldots, (d+1)$ i.e.
\[
\Lambda =
\sqrt{|T|}
 (\nabla \lambda_1 , \ldots, \nabla \lambda_{d+1}) =
\sqrt{|T|}
\begin{pmatrix}
\nabla
\lambda_1\cdot\bm e_1,& \ldots &\nabla \lambda_{d+1}\cdot\bm e_{1}\\
\cdots& \cdots &\cdots\\
\nabla
\lambda_1\cdot\bm e_d,& \ldots &\nabla \lambda_{d+1}\cdot\bm e_{d}.
\end{pmatrix}
\]
We note that $\Lambda^T \Lambda$ equals the local stiffness matrix for the
Laplace equation on $T$, namely
\[
(L_T)_{jk}=(\Lambda^T \Lambda)_{jk} = \int_T \nabla \lambda_k\cdot \nabla \lambda_j.
\]
Note that  we have
\begin{eqnarray*}
\nabla \varphi
&=& \alpha \sum_{k=1}^{d+1}\chi_k\nabla \lambda_k,
\quad \chi_{k} = \prod_{j=1;j\neq k}^{d+1} \lambda_j,\quad k=1,\ldots,(d+1).
\end{eqnarray*}
With this notation in hand, we now prove two auxiliary identities.
\begin{lemma}\label{lemma:ids}
 For $\nabla \varphi$ we have
\begin{itemize}
\item[(i)] $\displaystyle
\int_T(\nabla \varphi\nabla \varphi^T) = \alpha^2\eta_d\Lambda\Lambda^T$,
\item[(ii)] $\int_T |\nabla \varphi|^2  = \alpha^2\eta_d\operatorname{tr}(L_T)$.
\end{itemize}
Here $\displaystyle \eta_d =  \frac{2^{d-1}d!}{(3d)!}$.
\end{lemma}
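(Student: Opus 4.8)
The plan is to reduce both identities to the scalar moment formula~\eqref{int-on-T}. Starting from the expansion $\nabla\varphi = \alpha\sum_{k=1}^{d+1}\chi_k\nabla\lambda_k$ already recorded above, I would form the outer product $\nabla\varphi\,\nabla\varphi^T = \alpha^2\sum_{k,\ell}\chi_k\chi_\ell\,\nabla\lambda_k\nabla\lambda_\ell^T$ and integrate term by term, so that everything hinges on the numbers $m_{k\ell} := \int_T\chi_k\chi_\ell$.

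First I would evaluate $m_{k\ell}$ by inspecting the exponent vectors. When $k=\ell$, the integrand $\chi_k^2=\prod_{j\neq k}\lambda_j^2$ has exponent $2$ in $d$ of the slots and $0$ in the remaining one, so \eqref{int-on-T} gives $m_{kk} = |T|\,2^{d}d!/(3d)!$. When $k\neq\ell$, the integrand $\chi_k\chi_\ell$ has exponent $2$ in the $d-1$ slots whose index differs from both $k$ and $\ell$ and exponent $1$ in the two slots $k,\ell$, giving $m_{k\ell} = |T|\,2^{d-1}d!/(3d)!$; note $m_{kk}=2m_{k\ell}$ for $k\neq\ell$.

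Next I would assemble the matrix. Writing $\mu := |T|\,2^{d-1}d!/(3d)!$, the two cases combine into
\[
\int_T\nabla\varphi\,\nabla\varphi^T
= \alpha^2\mu\Big(\sum_{k}\nabla\lambda_k\nabla\lambda_k^T
+\sum_{k,\ell}\nabla\lambda_k\nabla\lambda_\ell^T\Big).
\]
The crucial simplification is that $\sum_{k}\nabla\lambda_k = \nabla\big(\sum_k\lambda_k\big)=\nabla 1 = 0$, so the full double sum---being $\big(\sum_k\nabla\lambda_k\big)\big(\sum_\ell\nabla\lambda_\ell\big)^T$---vanishes. What remains is $\alpha^2\mu\sum_k\nabla\lambda_k\nabla\lambda_k^T$, and from the definition of $\Lambda$ one has $\sum_k\nabla\lambda_k\nabla\lambda_k^T = |T|^{-1}\Lambda\Lambda^T$. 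Hence $\int_T\nabla\varphi\,\nabla\varphi^T = \alpha^2\,(\mu/|T|)\,\Lambda\Lambda^T = \alpha^2\eta_d\Lambda\Lambda^T$ with $\eta_d = 2^{d-1}d!/(3d)!$, which is (i). For (ii) I would simply take the trace of (i) and use $\operatorname{tr}(\Lambda\Lambda^T)=\operatorname{tr}(\Lambda^T\Lambda)=\operatorname{tr}(L_T)$.

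There is no serious obstacle here; the only delicate point is the bookkeeping of exponents in the two cases defining $m_{k\ell}$, together with noticing that the identity $\sum_k\nabla\lambda_k=0$ (coming from $\sum_k\lambda_k\equiv 1$) is exactly what removes the off-diagonal terms and leaves the clean multiple of $\Lambda\Lambda^T$.
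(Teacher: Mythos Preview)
Your proof is correct and follows essentially the same route as the paper's: both compute $\int_T\chi_k\chi_\ell$ via~\eqref{int-on-T}, then kill the rank-one contribution using $\sum_k\nabla\lambda_k=0$, and finish (ii) by taking a trace. The only difference is cosmetic---the paper packages the same steps in matrix form by writing $\nabla\varphi=\frac{\alpha}{\sqrt{|T|}}\Lambda\bm{\chi}$ and recognizing $\int_T\bm{\chi}\bm{\chi}^T=\eta_d|T|(I+\bm{1}\bm{1}^T)$ with $\Lambda\bm{1}=0$, which is exactly your componentwise computation in disguise.
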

\begin{proof}
To prove (i) we observe that
$\nabla \varphi = \frac{\alpha}{\sqrt{|T|}}
\Lambda\bm{\chi}$,  $\bm{\chi}=(\chi_1,\ldots,\chi_{d+1})^T$.
Since $\Lambda$ is a constant matrix
(independent of $x$) we have that
\[
\int_T\nabla \varphi\nabla \varphi^T =
\frac{\alpha^2}{|T|}\Lambda \left(\int_T\bm{\chi} \bm{\chi^T}\right)\Lambda^T.
\]
The formula given in~\eqref{int-on-T} gives that
\begin{equation*}
\left(\int_T\bm \chi \bm \chi^T\right)_{jk} =   \int_T\chi_j\chi_k
= \eta_d|T| \left\{
\begin{matrix}
2,\quad j=k,\\
1,\quad j\neq k.
\end{matrix}
\right.
\end{equation*}
Hence, $\int_T\bm \chi \bm \chi^T = \eta_d|T| (I +\bm{1}\bm{1}^T)$,
where $\bm 1 = (\underbrace{1,\ldots,1}_{d+1})^T$.  As
$\sum_{k=1}^{d+1} \lambda_k = 1$, we have that,
$\sum_{k=1}^{d+1} \nabla \lambda_k = 0$, or, equivalently,
$\Lambda \bm 1 = 0$.  These identities show that
\[
\int_T\nabla \varphi\nabla \varphi^T =
\alpha^2\eta_d\Lambda (I + \bm{1}\bm{1}^T)\Lambda^T =
\alpha^2\eta_d\Lambda \Lambda^T,
\]
and the proof of (i) is complete.

To show that (ii) holds we observe that
$\int_T |\nabla\varphi|^2=\int_T \operatorname{tr}(\nabla \varphi \nabla\varphi^T)$,
and we can use (i) to compute that
\begin{eqnarray*}
\int_T|\nabla \varphi|^2=
\int_T \operatorname{tr}(\nabla \varphi \nabla\varphi^T) =
 \operatorname{tr}\left(\int_T\nabla \varphi \nabla\varphi^T\right)=
\alpha^2\eta_d\operatorname{tr}(L_T).
\end{eqnarray*}
In the last step we used that
$\operatorname{tr}(\Lambda\Lambda^T)=\operatorname{tr}(\Lambda^T\Lambda)$.
\end{proof}
Using this lemma we now calculate the local stiffness matrices
for $A_{b,T}$ and $G_{b,T}$.
\begin{lemma}\label{lemma:matrices} For $A_{b,T}$ and $G_{b,T}$ we
  have
\begin{enumerate}
\item[(i)] $A_{b,T} =
\alpha^2\eta_d(\mu\operatorname{tr}(L_T)I + (\lambda+\mu)\Lambda\Lambda^T)$.
\item[(ii)] $\displaystyle G_{b,T} = \frac{\alpha\sqrt{|T|}d!}{(2d+1)!}\Lambda$.
\end{enumerate}
\end{lemma}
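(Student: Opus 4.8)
The plan is to express both local matrices directly through integrals of $\varphi$ and $\nabla\varphi$ over $T$ and then invoke Lemma~\ref{lemma:ids} together with the integration rule~\eqref{int-on-T}.

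For $A_{b,T}$ I would first observe that the Jacobian of the vector field $\varphi\bm e_k$ is the rank-one matrix $\nabla(\varphi\bm e_k)=\bm e_k(\nabla\varphi)^T$, so that
\[
\varepsilon(\varphi\bm e_k)=\tfrac12\bigl(\bm e_k(\nabla\varphi)^T+\nabla\varphi\,\bm e_k^T\bigr),
\qquad
\ddiv(\varphi\bm e_k)=\partial_k\varphi .
\]
Expanding the Frobenius product $\varepsilon(\varphi\bm e_k):\varepsilon(\varphi\bm e_j)=\operatorname{tr}\bigl(\varepsilon(\varphi\bm e_k)\varepsilon(\varphi\bm e_j)\bigr)$ into its four rank-one pieces and using $\bm e_j^T\bm e_k=\delta_{jk}$ and $\bm e_j^T\nabla\varphi=\partial_j\varphi$, each term collapses to a multiple of $\delta_{jk}|\nabla\varphi|^2$ or of $(\partial_j\varphi)(\partial_k\varphi)$, giving the pointwise identity
\[
\varepsilon(\varphi\bm e_k):\varepsilon(\varphi\bm e_j)=\tfrac12\bigl(\delta_{jk}|\nabla\varphi|^2+(\partial_j\varphi)(\partial_k\varphi)\bigr).
\]
Substituting this and the divergence formula into $(A_{b,T})_{jk}=2\mu\int_T\varepsilon(\varphi\bm e_k):\varepsilon(\varphi\bm e_j)+\lambda\int_T\ddiv(\varphi\bm e_k)\ddiv(\varphi\bm e_j)$, and recognising $\int_T(\partial_j\varphi)(\partial_k\varphi)$ as the $(j,k)$ entry of $\int_T\nabla\varphi\,(\nabla\varphi)^T$, I obtain the matrix identity
\[
A_{b,T}=\mu\Bigl(\int_T|\nabla\varphi|^2\Bigr)I+(\lambda+\mu)\int_T\nabla\varphi\,(\nabla\varphi)^T .
\]
Lemma~\ref{lemma:ids}(ii) and (i) then replace the two integrals by $\alpha^2\eta_d\operatorname{tr}(L_T)$ and $\alpha^2\eta_d\Lambda\Lambda^T$, which is exactly (i).

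For $G_{b,T}$ the associated bilinear form is $(\bm v_b,\nabla p)$, so with the local bubble basis $\{\varphi\bm e_j\}_{j=1}^d$ and the local nodal pressure basis $\{\lambda_k\}_{k=1}^{d+1}$,
\[
(G_{b,T})_{jk}=\int_T\varphi\,\bm e_j\cdot\nabla\lambda_k=(\partial_j\lambda_k)\int_T\varphi ,
\]
since $\nabla\lambda_k$ is constant on $T$. Taking all exponents equal to one in~\eqref{int-on-T} gives $\int_T\varphi=\alpha\int_T\lambda_1\cdots\lambda_{d+1}=\alpha|T|\,d!/(2d+1)!$, and from the definition of $\Lambda$ we have $\partial_j\lambda_k=\Lambda_{jk}/\sqrt{|T|}$; combining these yields $G_{b,T}=\dfrac{\alpha\sqrt{|T|}\,d!}{(2d+1)!}\,\Lambda$, i.e.\ (ii).

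The only step that is not a direct substitution is the contraction of the two symmetrized rank-one-plus-transpose matrices producing the pointwise identity for $\varepsilon(\varphi\bm e_k):\varepsilon(\varphi\bm e_j)$; once that small tensor computation is in hand, both assertions follow mechanically from Lemma~\ref{lemma:ids} and~\eqref{int-on-T}, so I expect it to be the main (and essentially only) obstacle.
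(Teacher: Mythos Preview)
Your argument is correct and follows essentially the same route as the paper: you compute $\varepsilon(\varphi\bm e_k)$ as a symmetrized rank-one matrix, contract to obtain the $\tfrac12\bigl(\delta_{jk}|\nabla\varphi|^2+(\partial_j\varphi)(\partial_k\varphi)\bigr)$ identity, apply Lemma~\ref{lemma:ids} for the two resulting integrals, and for $G_{b,T}$ pull the constant $\nabla\lambda_k$ outside and evaluate $\int_T\varphi$ via~\eqref{int-on-T}. The only cosmetic difference is that you state the strain contraction pointwise whereas the paper writes it after integration; otherwise the proofs are the same.
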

\begin{proof}
To show the identity for $A_{b,T}$ recall that
\begin{eqnarray*}
(A_{b,T})_{jk}&=&a((\varphi\bm e_k),(\varphi\bm{e}_j))
\\
& = & 2\mu \int_T
\varepsilon((\varphi\bm e_k)):\varepsilon((\varphi\bm e_j))
+ \lambda \int_T
\operatorname{div}(\varphi\bm e_k)\operatorname{div}(\varphi\bm e_j).
\end{eqnarray*}
A straightforward calculation shows that
\[
\varepsilon((\varphi\bm e_k))=
\frac12(\nabla\varphi\bm{e}_k^T +\bm e_k(\nabla\varphi)^T),
\]
and hence
\[
\int_T\varepsilon((\varphi\bm e_k)):\varepsilon((\varphi\bm e_j))=
\frac{\delta_{jk}}{2}\int_T|\nabla \varphi|^2 +
\frac12 \left(\int_T\nabla \varphi \nabla \varphi^T\right)_{jk}.
\]
We also have
\[
\int_T
\operatorname{div} (\varphi\bm e_k)
\operatorname{div} (\varphi\bm e_j)  = \left(\int_T\nabla \varphi \nabla \varphi^T\right)_{jk}.
\]
Finally, using Lemma~\ref{lemma:ids} for $A_{b,T}$ we get
\begin{equation}\label{final-matrix}
A_{b,T} =
\alpha^2\eta_d(\mu\operatorname{tr}(L_T)I + (\lambda+\mu)\Lambda\Lambda^T).
\end{equation}

To show (ii), we have, for  $k=1,\ldots, (d+1)$ and $j=1,\ldots, d$,
\begin{eqnarray*}
(G_{b,T})_{jk} &=&
 \int_T (\varphi \bm{e}_j\cdot \nabla \lambda_{k})
=(\nabla\lambda_k\cdot \bm{e_j}) \int_T \varphi
=\frac{\Lambda_{jk}}{\sqrt{|T|}}
\int_T \varphi.
\end{eqnarray*}
Computing $\int_T \varphi_T$ concludes the proof of (ii).
\end{proof}

From this, for the local Schur complement $S_{b,T}$ we get
\begin{eqnarray*}
S_{b,T} & = &  G_{b,T}^T A_{b,T}^{-1} G_{b,T} =
c_d|T|\Lambda^T( \mu\operatorname{tr}(L_T) I +
(\lambda+\mu)\Lambda\Lambda^T)^{-1} \Lambda\\
& = &
\sigma\Lambda^T( I + \beta\Lambda\Lambda^T)^{-1} \Lambda\\
\end{eqnarray*}
where
\[
\sigma =\frac{c_d|T|}{\mu\operatorname{tr}(L_T)},\quad
\beta =
\frac{\lambda+\mu}{\operatorname{\mu tr}(L_T)},
\quad\mbox{and}\quad
c_d =\frac{d!(3d)!}{2^{d-1}((2d+1)!)^2}.
\]
We apply the Sherman-Morrison-Woodbury to obtain that
\begin{equation*}
(I+\beta \Lambda^T \Lambda)^{-1} = I -
\beta \Lambda^T( I + \beta \Lambda\Lambda^T)^{-1}\Lambda
\end{equation*}
This then shows that
\begin{equation*}
\sigma\Lambda^T( I + \beta \Lambda\Lambda^T)^{-1}\Lambda =
\frac{\sigma}{\beta}\left[I - (I+\beta L_T)^{-1} \right].
\end{equation*}
Observing that
\[
I - (I+\beta L_T)^{-1} = I - ((I + \beta L_T) - \beta L_T)(I+\beta L_T)^{-1} =
\beta L_T (I+\beta L_T)^{-1},
\]
we obtain that
\begin{equation}\label{eq:sb}
S_{b,T}  = \sigma L_T (I+\beta L_T)^{-1}.
\end{equation}

We next show that $S_{b,T}$ behaves as a scaling of the local stiffness
matrix corresponding to the Laplace operator.
\begin{lemma}\label{lemma:schur-local} We have the following spectral equivalence result
\[
S_{b,T}  \eqsim h_T^2 L_T,
\]
with constants independent  of the mesh size.
\end{lemma}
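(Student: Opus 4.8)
The plan is to exploit the closed form for the local Schur complement already derived in~\eqref{eq:sb}, namely $S_{b,T}=\sigma L_T(I+\beta L_T)^{-1}$ with
\[
\sigma=\frac{c_d|T|}{\mu\operatorname{tr}(L_T)},\qquad \beta=\frac{\lambda+\mu}{\mu\operatorname{tr}(L_T)},
\]
and to split the argument into two parts: (a) showing that the rational factor $(I+\beta L_T)^{-1}$ is uniformly well conditioned, and (b) a standard scaling estimate $\sigma\eqsim h_T^2$.

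First I would note that $L_T=\Lambda^T\Lambda$ is symmetric positive semidefinite with one-dimensional kernel $\operatorname{span}\{\bm 1\}$ (since $\Lambda\bm 1=0$, because $\sum_k\nabla\lambda_k=0$). Consequently $L_T$, $(I+\beta L_T)^{-1}$ and $S_{b,T}$ are functions of one and the same symmetric matrix, hence simultaneously diagonalizable and sharing that kernel, so the asserted equivalence $S_{b,T}\eqsim h_T^2L_T$ is to be read as an equivalence of the associated quadratic forms (trivially true on the common kernel, so only the behaviour on the range of $L_T$ matters). The key observation is that $\beta\operatorname{tr}(L_T)=\frac{\lambda+\mu}{\mu}$ is a fixed constant, independent of $T$ and $h_T$. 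Since every symmetric positive semidefinite matrix $M$ obeys $M\preceq\operatorname{tr}(M)\,I$, this gives $0\preceq\beta L_T\preceq\frac{\lambda+\mu}{\mu}I$, whence
\[
\frac{\mu}{\lambda+2\mu}\,I\;\preceq\;(I+\beta L_T)^{-1}\;\preceq\;I .
\]
Multiplying by the commuting factor $\sigma L_T\succeq 0$ yields $\frac{\mu}{\lambda+2\mu}\,\sigma L_T\preceq S_{b,T}\preceq\sigma L_T$, i.e. $S_{b,T}\eqsim\sigma L_T$ with constants depending only on $\lambda$ and $\mu$.

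It then remains to prove $\sigma\eqsim h_T^2$. Writing $\operatorname{tr}(L_T)=\sum_{k=1}^{d+1}\int_T|\nabla\lambda_k|^2=|T|\sum_{k=1}^{d+1}|\nabla\lambda_k|^2$ we get $\sigma=\dfrac{c_d}{\mu\sum_{k=1}^{d+1}|\nabla\lambda_k|^2}$, and the classical estimate $\sum_{k=1}^{d+1}|\nabla\lambda_k|^2\eqsim h_T^{-2}$ finishes the argument: the bound $\gtrsim h_T^{-2}$ is general (some altitude of $T$ is at most $h_T$, so some $|\nabla\lambda_k|$ is at least $h_T^{-1}$), while $\lesssim h_T^{-2}$ uses shape regularity of the triangulation. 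Combining the two steps gives $S_{b,T}\eqsim h_T^2 L_T$ with constants independent of the mesh size. The only slightly delicate point in the write-up is keeping track of the kernel so that ``$\eqsim$'' is meaningful for merely semidefinite matrices; past that, the proof is a short algebraic manipulation based on~\eqref{eq:sb} together with the identity $\beta\operatorname{tr}(L_T)=\text{const}$.
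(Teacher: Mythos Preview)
Your proof is correct and follows essentially the same route as the paper's. Both start from the closed form $S_{b,T}=\sigma L_T(I+\beta L_T)^{-1}$ in~\eqref{eq:sb}, both hinge on the observation that $\beta\operatorname{tr}(L_T)=(\lambda+\mu)/\mu$ is a mesh-independent constant, and both finish with the scaling $\sigma\eqsim h_T^{2}$ (equivalently $|T|/\operatorname{tr}(L_T)\eqsim h_T^2$). The only cosmetic difference is that the paper writes out the eigendecomposition of $\widetilde L_T=L_T/\operatorname{tr}(L_T)$ explicitly and bounds each rational eigenvalue $\mu_j/(1+\widetilde\beta\mu_j)$, whereas you phrase the same step as the operator inequality $\beta L_T\preceq\operatorname{tr}(\beta L_T)\,I$ and then multiply by the commuting factor $\sigma L_T$; this yields slightly looser constants than the paper's (which uses $\mu_1,\mu_d$ rather than the trace), but the argument is the same in substance.
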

\begin{proof}
  Let $\mu_1\ge \mu_2\ge\ldots\ge \mu_{d}> \mu_{d+1}=0$ be the
  eigenvalues of the scaled matrix
  $\widetilde L_T=\frac{1}{\operatorname{tr}(L_T)}L_T$, and
  $\psi_1,\ldots,\psi_{d+1}$ be the corresponding eigenvectors. Note
  that because of the scaling, we have that $\mu_k$ can be bounded
  independently of the mesh size $h_T$.  We set
  $\widetilde{\beta}=\beta \operatorname{tr}(L_T)
  =\frac{(\lambda+\mu)}{\mu}$
and we obtain the following representation of $\widetilde{L}_T$:
\[
\widetilde{L}_T = \sum_{j=1}^d \mu_j \psi_j\psi_j^T,
\quad
(I+\beta L_T)^{-1} =
(I+\widetilde \beta \widetilde{L}_T)^{-1} = \sum_{j=1}^d \frac{1}{1+\widetilde{\beta}\mu_j} \psi_j\psi_j^T.
\]
Obviously, similar relation holds for $S_{b,T}$ because the
eigenvectors of $L_T$ ($\widetilde L_T$) and $S_{b,T}$ are the same
(this is easily seen from~\eqref{eq:sb}). We then have that for any
$x\in \mathbb{R}^d$ the following inequalities hold
\begin{eqnarray*}
  \langle S_{b,T} x,x\rangle_{\ell^2}
&= &\frac{c_d|T|}{\mu} \sum_{j=1}^{d}
                \frac{\mu_j}{1+\widetilde{\beta}\mu_j}\langle\psi_j,x\rangle_{\ell^2}^2.
\end{eqnarray*}
Hence,
\begin{eqnarray*}
 \frac{c_d|T|}{\mu(1+\widetilde{\beta}\mu_1)} \langle \widetilde{L}_T x,x\rangle_{\ell^2}
\le   \langle S_{b,T} x,x\rangle_{\ell^2}
\le \frac{c_d|T|}{\mu(1+\widetilde{\beta}\mu_d)} \langle \widetilde{L}_T x,x\rangle_{\ell^2}.
\end{eqnarray*}
We write everything in terms of $L_T$, and from the obvious relations
$\operatorname{tr}(L_T)\approx h_T^{d-2}$, $|T|\approx h_T^d$ we
conclude the proof of the lemma.
\end{proof}
\begin{remark}
As is easily seen, for $d=1$ we have that both bounds coincide, and in
fact, we have that
\begin{equation}\label{eq:1d-schur}
  S_{b,T}  =  \frac{h_T}{6\mu(1+\widetilde{\beta})}\widetilde{L}_T =
 \frac{h^2_T}{12(2\mu+\lambda)}L_T,
\end{equation}
where we have used that $\mu_1 = 1$ and $|T| = h_T$ in 1d.
\end{remark}



\section*{References}
\bibliographystyle{elsarticle-num}
\bibliography{bib_PoroE}

\begin{thebibliography}{10}
\expandafter\ifx\csname url\endcsname\relax
  \def\url#1{\texttt{#1}}\fi
\expandafter\ifx\csname urlprefix\endcsname\relax\def\urlprefix{URL }\fi
\expandafter\ifx\csname href\endcsname\relax
  \def\href#1#2{#2} \def\path#1{#1}\fi

\bibitem{terzaghi}
K.~Terzaghi, Theoretical Soil Mechanics, Wiley: New York, 1943.

\bibitem{biot1}
M.~A. Biot, General theory of three‐dimensional consolidation, Journal of
  Applied Physics 12~(2) (1941) 155--164.

\bibitem{biot2}
M.~A. Biot, Theory of elasticity and consolidation for a porous anisotropic
  solid, Journal of Applied Physics 26~(2) (1955) 182--185.

\bibitem{showalter}
R.~Showalter, Diffusion in poro-elastic media, Journal of Mathematical Analysis
  and Applications 251~(1) (2000) 310 -- 340.

\bibitem{zenisek}
A.~{\v{Z}}en{\'{\i}}{\v{s}}ek, The existence and uniqueness theorem in {B}iot's
  consolidation theory, Apl. Mat. 29~(3) (1984) 194--211.

\bibitem{DDL1997}
R.~Z. Dautov, M.~I. Drobotenko, A.~D. Lyashko, Study on well-posedness of the
  generalized solution of the problem of filtration consolidation, Differents.
  Uravnenia 33 (1997) 515 -- 521.

\bibitem{Roose2003204}
T.~Roose, P.~A. Netti, L.~L. Munn, Y.~Boucher, R.~K. Jain, Solid stress
  generated by spheroid growth estimated using a linear poroelasticity model,
  Microvascular Research 66~(3) (2003) 204 -- 212.

\bibitem{Swan2003}
C.~Swan, R.~Lakers, R.~Brand, K.~Stewart, Micromechanically based poroelastic
  modeling of fluid flow in haversian bone, J. Biomech. Eng. 125 (2003) 25 --
  37.

\bibitem{Datta2010}
A.~D. Amit~Halder, A.~K. Datta, Modeling transport in porous media wiith phase
  change: Applications to food processing, J. Heat Transfer 133 (2010)
  031010--1 -- 031010--13.

\bibitem{Coussy2004}
O.~Coussy, Poromechanics, John Wiley \& Sons, Ltd, 2004.

\bibitem{BarryMercer}
S.~I. {Barry}, G.~N. {Mercer}, {Exact Solutions for Two-Dimensional
  Time-Dependent Flow and Deformation Within a Poroelastic Medium}, Journal of
  Applied Mechanics 66 (1999) 536.
\newblock \href {http://dx.doi.org/10.1115/1.2791080}
  {\path{doi:10.1115/1.2791080}}.

\bibitem{LewisSchrefler}
R.~Lewis, B.~Schrefler, The Finite Element Method in the Static and Dynamic
  Deformation and Consolidation of Porous Media, Wiley: New York, 1998.

\bibitem{LewisSchrefler78}
R.~W. Lewis, B.~Schrefler, A fully coupled consolidation model of the
  subsidence of venice, Water Resources Research 14~(2) (1978) 223--230.

\bibitem{LewisTran89}
R.~W. Lewis, D.~V. Tran, Numerical simulation of secondary consolidation of
  soil: Finite element application, International Journal for Numerical and
  Analytical Methods in Geomechanics 13~(1) (1989) 1--18.

\bibitem{LewisSchreflerSimoni}
R.~W. Lewis, B.~A. Schrefler, L.~Simoni, Coupling versus uncoupling in soil
  consolidation, International Journal for Numerical and Analytical Methods in
  Geomechanics 15~(8) (1991) 533--548.

\bibitem{MastersPaoLewis}
I.~Masters, W.~K.~S. Pao, R.~W. Lewis, Coupling temperature to a
  double-porosity model of deformable porous media, International Journal for
  Numerical Methods in Engineering 49~(3) (2000) 421--438.

\bibitem{CMAM_2002}
F.~J. Gaspar, F.~J. Lisbona, P.~N. Vabishchevich,
  \href{http://dx.doi.org/10.2478/cmam-2002-0008}{Finite difference schemes for
  poro-elastic problems}, Comput. Methods Appl. Math. 2~(2) (2002) 132--142.
\newblock \href {http://dx.doi.org/10.2478/cmam-2002-0008}
  {\path{doi:10.2478/cmam-2002-0008}}.
\newline\urlprefix\url{http://dx.doi.org/10.2478/cmam-2002-0008}

\bibitem{Lazarov2007}
R.~E. Ewing, O.~P. Iliev, R.~D. Lazarov, A.~Naumovich, On convergence of
  certain finite volume difference discretizations for 1d poroelasticity
  interface problems, Numerical Methods for Partial Differential Equations
  23~(3) (2007) 652--671.

\bibitem{PhDNaumovich}
A.~Naumovich, Efficient numerical methods for the {B}iot poroelasticity system
  in multilayered domains, Kaiserslautern, Techn. Univ., Diss., 2007.

\bibitem{Gaspar2003}
F.~J. Gaspar, F.~J. Lisbona, P.~N. Vabishchevich,
  \href{http://dx.doi.org/10.1016/S0168-9274(02)00190-3}{A finite difference
  analysis of {B}iot's consolidation model}, Appl. Numer. Math. 44~(4) (2003)
  487--506.
\newblock \href {http://dx.doi.org/10.1016/S0168-9274(02)00190-3}
  {\path{doi:10.1016/S0168-9274(02)00190-3}}.
\newline\urlprefix\url{http://dx.doi.org/10.1016/S0168-9274(02)00190-3}

\bibitem{Ferronato2010}
M.~Ferronato, N.~Castelletto, G.~Gambolati, A fully coupled 3-d mixed finite
  element model of {B}iot consolidation, J. Comput. Phys. 229~(12) (2010)
  4813--4830.

\bibitem{Langtangen2012}
J.~B. Haga, H.~Osnes, H.~P. Langtangen, On the causes of pressure oscillations
  in low-permeable and low-compressible porous media, International Journal for
  Numerical and Analytical Methods in Geomechanics 36~(12) (2012) 1507--1522.

\bibitem{Favino2013}
M.~Favino, A.~Grillo, R.~Krause,
  \href{http://ascelibrary.org/doi/abs/10.1061/9780784412992.110}{A stability
  condition for the numerical simulation of poroelastic systems}, in:
  C.~Hellmich, B.~Pichler, D.~Adam (Eds.), Poromechanics V: Proceedings of the
  Fifth {B}iot Conference on Poromechanics, 2013, pp. 919--928.
\newline\urlprefix\url{http://ascelibrary.org/doi/abs/10.1061/9780784412992.110}

\bibitem{WheelerPhillip}
P.~Phillips, M.~Wheeler, Overcoming the problem of locking in linear elasticity
  and poroelasticity: an heuristic approach, Computational Geosciences 13~(1)
  (2009) 5--12.

\bibitem{1974BrezziF-aa}
F.~Brezzi, On the existence, uniqueness and approximation of saddle-point
  problems arising from {L}agrangian multipliers, Rev. Fran\c caise Automat.
  Informat. Recherche Op\'erationnelle S\'er. Rouge 8~(R-2) (1974) 129--151.

\bibitem{MuradLoula92}
M.~A. Murad, A.~F.~D. Loula,
  \href{http://dx.doi.org/10.1016/0045-7825(92)90193-N}{Improved accuracy in
  finite element analysis of {B}iot's consolidation problem}, Comput. Methods
  Appl. Mech. Engrg. 95~(3) (1992) 359--382.
\newblock \href {http://dx.doi.org/10.1016/0045-7825(92)90193-N}
  {\path{doi:10.1016/0045-7825(92)90193-N}}.
\newline\urlprefix\url{http://dx.doi.org/10.1016/0045-7825(92)90193-N}

\bibitem{MuradLoula94}
M.~A. Murad, A.~F.~D. Loula, \href{http://dx.doi.org/10.1002/nme.1620370407}{On
  stability and convergence of finite element approximations of {B}iot's
  consolidation problem}, Internat. J. Numer. Methods Engrg. 37~(4) (1994)
  645--667.
\newblock \href {http://dx.doi.org/10.1002/nme.1620370407}
  {\path{doi:10.1002/nme.1620370407}}.
\newline\urlprefix\url{http://dx.doi.org/10.1002/nme.1620370407}

\bibitem{MuradLoulaThome}
M.~A. Murad, V.~Thom{\'e}e, A.~F.~D. Loula,
  \href{http://dx.doi.org/10.1137/0733052}{Asymptotic behavior of semidiscrete
  finite-element approximations of {B}iot's consolidation problem}, SIAM J.
  Numer. Anal. 33~(3) (1996) 1065--1083.
\newblock \href {http://dx.doi.org/10.1137/0733052}
  {\path{doi:10.1137/0733052}}.
\newline\urlprefix\url{http://dx.doi.org/10.1137/0733052}

\bibitem{Aguilar2008}
G.~Aguilar, F.~Gaspar, F.~Lisbona, C.~Rodrigo,
  \href{http://dx.doi.org/10.1002/nme.2295}{Numerical stabilization of {B}iot's
  consolidation model by a perturbation on the flow equation}, Internat. J.
  Numer. Methods Engrg. 75~(11) (2008) 1282--1300.
\newblock \href {http://dx.doi.org/10.1002/nme.2295}
  {\path{doi:10.1002/nme.2295}}.
\newline\urlprefix\url{http://dx.doi.org/10.1002/nme.2295}

\bibitem{1973TaylorC_HoodP-aa}
C.~Taylor, P.~Hood, A numerical solution of the {N}avier-{S}tokes equations
  using the finite element technique, Internat. J. Comput. \& Fluids 1~(1)
  (1973) 73--100.

\bibitem{1984ArnoldD_BrezziF_FortinM-aa}
D.~N. Arnold, F.~Brezzi, M.~Fortin,
  \href{http://dx.doi.org/10.1007/BF02576171}{A stable finite element for the
  {S}tokes equations}, Calcolo 21~(4) (1984) 337--344 (1985).
\newblock \href {http://dx.doi.org/10.1007/BF02576171}
  {\path{doi:10.1007/BF02576171}}.
\newline\urlprefix\url{http://dx.doi.org/10.1007/BF02576171}

\bibitem{1991BrezziF_FortinM-aa}
F.~Brezzi, M.~Fortin, Mixed and hybrid finite element methods, Springer, New
  York, 1991.

\bibitem{2013BoffiD_BrezziF_FortinM-aa}
D.~Boffi, F.~Brezzi, M.~Fortin,
  \href{http://dx.doi.org/10.1007/978-3-642-36519-5}{Mixed finite element
  methods and applications}, Vol.~44 of Springer Series in Computational
  Mathematics, Springer, Heidelberg, 2013.
\newblock \href {http://dx.doi.org/10.1007/978-3-642-36519-5}
  {\path{doi:10.1007/978-3-642-36519-5}}.
\newline\urlprefix\url{http://dx.doi.org/10.1007/978-3-642-36519-5}

\bibitem{1984BrezziF_PitkarantaJ-aa}
F.~Brezzi, J.~Pitk{{\"a}}ranta, On the stabilization of finite element
  approximations of the {S}tokes equations, in: Efficient solutions of elliptic
  systems ({K}iel, 1984), Vol.~10 of Notes Numer. Fluid Mech., Friedr. Vieweg,
  Braunschweig, 1984, pp. 11--19.

\bibitem{1993BaiocchiC_BrezziF_FrancaL-aa}
C.~Baiocchi, F.~Brezzi, L.~P. Franca,
  \href{http://dx.doi.org/10.1016/0045-7825(93)90119-I}{Virtual bubbles and
  {G}alerkin-least-squares type methods ({G}a.{L}.{S}.)}, Comput. Methods Appl.
  Mech. Engrg. 105~(1) (1993) 125--141.
\newblock \href {http://dx.doi.org/10.1016/0045-7825(93)90119-I}
  {\path{doi:10.1016/0045-7825(93)90119-I}}.
\newline\urlprefix\url{http://dx.doi.org/10.1016/0045-7825(93)90119-I}

\bibitem{1984VerfurthR-aa}
R.~Verf{\"u}rth, Error estimates for a mixed finite element approximation of
  the {S}tokes equations, RAIRO Anal. Num\'er. 18~(2) (1984) 175--182.

\bibitem{1990BankR_WelfertB-aa}
R.~E. Bank, B.~D. Welfert,
  \href{http://dx.doi.org/10.1016/0045-7825(90)90124-5}{A comparison between
  the mini-element and the {P}etrov-{G}alerkin formulations for the generalized
  {S}tokes problem}, Comput. Methods Appl. Mech. Engrg. 83~(1) (1990) 61--68.
\newblock \href {http://dx.doi.org/10.1016/0045-7825(90)90124-5}
  {\path{doi:10.1016/0045-7825(90)90124-5}}.
\newline\urlprefix\url{http://dx.doi.org/10.1016/0045-7825(90)90124-5}

\bibitem{1986HughesT_FrancaL_BalestraM-aa}
T.~J.~R. Hughes, L.~P. Franca, M.~Balestra,
  \href{http://dx.doi.org/10.1016/0045-7825(86)90025-3}{A new finite element
  formulation for computational fluid dynamics. {V}. {C}ircumventing the
  {B}abu\v ska-{B}rezzi condition: a stable {P}etrov-{G}alerkin formulation of
  the {S}tokes problem accommodating equal-order interpolations}, Comput.
  Methods Appl. Mech. Engrg. 59~(1) (1986) 85--99.
\newblock \href {http://dx.doi.org/10.1016/0045-7825(86)90025-3}
  {\path{doi:10.1016/0045-7825(86)90025-3}}.
\newline\urlprefix\url{http://dx.doi.org/10.1016/0045-7825(86)90025-3}

\bibitem{1988BrezziF_DouglasJ-aa}
F.~Brezzi, J.~Douglas, Jr.,
  \href{http://dx.doi.org/10.1007/BF01395886}{Stabilized mixed methods for the
  {S}tokes problem}, Numer. Math. 53~(1-2) (1988) 225--235.
\newblock \href {http://dx.doi.org/10.1007/BF01395886}
  {\path{doi:10.1007/BF01395886}}.
\newline\urlprefix\url{http://dx.doi.org/10.1007/BF01395886}

\bibitem{2006ThomeeV-aa}
V.~Thom{\'e}e, Galerkin finite element methods for parabolic problems, 2nd
  Edition, Vol.~25 of Springer Series in Computational Mathematics,
  Springer-Verlag, Berlin, 2006.

\bibitem{NAG:NAG1062}
J.~B. Haga, H.~Osnes, H.~P. Langtangen, On the causes of pressure oscillations
  in low-permeable and low-compressible porous media, International Journal for
  Numerical and Analytical Methods in Geomechanics 36~(12) (2012) 1507--1522.
\newblock \href {http://dx.doi.org/10.1002/nag.1062}
  {\path{doi:10.1002/nag.1062}}.

\bibitem{Mandel}
J.~Mandel, Consolidation des sols (\'{e}tude de math\'{e}matique),
  G\'{e}otechnique 3 (1953) 287--299.

\bibitem{Skempton}
A.~W. Skempton, The pore-pressure coefficients {A} and {B}, G\'{e}otechnique 4
  (1954) 143--147(4).

\bibitem{abousleiman1996mandel}
Y.~Abousleiman, A.-D. Cheng, L.~Cui, E.~Detournay, J.-C. Roegiers, Mandel's
  problem revisited, Geotechnique 46~(2) (1996) 187--195.

\bibitem{coussy1995}
O.~Coussy, Mechanics of Porous Continua, Wiley, 1995.

\bibitem{1978CiarletP-aa}
P.~G. Ciarlet, The finite element method for elliptic problems, North-Holland
  Publishing Co., Amsterdam-New York-Oxford, 1978, studies in Mathematics and
  its Applications, Vol. 4.

\end{thebibliography}





\end{document}